\theoremstyle{plain}
\newtheorem{thm}{Theorem}[section]
\newtheorem{lem}[thm]{Lemma}
\newtheorem{prop}[thm]{Proposition}
\newtheorem{coro}[thm]{Corollary}
\theoremstyle{definition}
\newtheorem{defn}[thm]{Definition}
\newtheorem*{nota}{Notation}
\theoremstyle{remark}
\newtheorem{rem}[thm]{Remark}
\newcommand{\on}{\operatorname}
\newcommand{\mc}{\mathcal}
\newcommand{\mb}{\mathbf}
\newcommand{\mbb}{\mathbb}
\newcommand{\mr}{\mathrm}
\newcommand{\mf}{\mathfrak}
\newcommand{\mscr}{\mathscr}
\newcommand{\id}{\ensuremath{\mathop{\rm id\,}\nolimits}}
\newcommand{\Z}{\mathbb{Z}}
\newcommand{\R}{\mathbb{R}}
\newcommand{\Q}{\mathbb{Q}}
\newcommand{\A}{\mathbb{A}}
\newcommand{\Gm}{\mbb{G}_m}
\newcommand{\m}{\mc{M}}
\newcommand{\vp}{\varphi}
\newcommand{\ve}{\varepsilon}
\newcommand{\om}{\omega}
\newcommand{\Om}{\Omega}
\newcommand{\ol}{\overline}
\newcommand{\st}{\stackrel}
\newcommand{\mx}{\mbox}
\newcommand{\geqs}{\geqslant}
\newcommand{\leqs}{\leqslant}
\newcommand{\ot}{\otimes}
\newcommand{\w}{\wedge}
\newcommand{\wh}{\widehat}
\newcommand{\wt}{\widetilde}
\newcommand{\sm}{\setminus}
\newcommand{\sha}{\mbox{ {\footnotesize {\fontencoding{OT2}\selectfont\char88} }} }
\newcommand{\dd}{\on{d}\!}
\newcommand{\ra}{\rightarrow}
\newcommand{\lra}{\longrightarrow}
\newcommand{\Gr}{\on{Gr}}
\newcommand{\HH}{\on{H}}
\newcommand{\p}{\mathbb{P}}
\newcommand{\Fc}{\mc{F}}
\newcommand{\dme}[1][]{\mc{DM}_{gm}^{eff}}
\newcommand{\dm}[1][]{\mc{DM}_{gm}}
\newcommand{\SmCorr}[1][]{\on{\SmCorr}_{#1}}
\newcommand{\h}{\mf{H}}
\newcommand{\equ}{\begin{equation}}
\newcommand{\eq}{\equ}
\newcommand{\eqf}{\end{equation}}
\newcommand{\fr}[2]{\frac{#1}{#2}}
\newcommand{\ds}{\displaystyle}
\def\eqa{\begin{eqnarray*}}
\def\eqn{\begin{eqnarray}}
\def\eqaf{\end{eqnarray*}}
\def\eqnf{\end{eqnarray}}
\newcommand{\f}[2]{f_{#1_{1},\ldots,#1_{#2}}}
\newcommand{\Inc}[1]{\int_{[0,1]^{#1}}}
\newcommand{\Intd}[1]{\int_{\Delta_{#1}}}
\newcommand{\ent}[1]{[\![#1]\!]}
\DeclareMathOperator{\Prod}{{\mbox{\small $\prod$}}}
\newcommand{\yks}[1]{\mathbf{x}(\mathbf{k},{#1})}
\newcommand{\yk}[1]{\Prod\mathbf{x}(\mathbf{k},{#1})}
\newcommand{\ysk}[1]{\Prod\mathbf{x}(\mathbf{k},\leqs{#1})}
\newcommand{\ykz}[1][0]{\mathbf{x}_{#1}}
\newcommand{\pykz}[1][0]{\Prod \mathbf{x}_{#1}}
\newcommand{\pyk}{\Prod \mathbf{x}}
\newcommand{\yls}[1]{\mathbf{x'}(\mathbf{l},{#1})}
\newcommand{\ysl}[1]{\Prod\mathbf{x'}(\mathbf{l},\leqs{#1})}
\newcommand{\ylz}[1][0]{\mathbf{x}_{#1}\mathbf{'}}
\newcommand{\pylz}[1][0]{\Prod \mathbf{x'}_{#1}}
\newcommand{\pyl}{\Prod \mathbf{x'}}
\newcommand{\dx}[1][]{{d^{#1}x}}
\newcommand{\du}[1][]{{d^{#1}u}}
\newcommand{\mn}[1]{\m_{0,#1+3}}
\newcommand{\mnb}[1]{\ol{\m_{0,#1+3}}}
\newcommand{\oms}[1]{\om_{\mathbf{#1}}}
\newcommand{\As}[1]{A_{\mathbf{#1}}}
\newcommand{\ab}[2]{B_{#1}^{A_{#2}}}
\newcommand{\abs}[2]{\ab{#1}{\mathbf{#2}}}
\newcommand{\pa}[1]{\left(#1 \right)}
\newcommand{\PrJs}[3]{\mr{\Pi}^{#1_{#2}}\mb{#3}}
\newcommand{\pjs}[2][s]{\PrJs{J}{#2}{#1}}
\title{Motivic double shuffle }
\author{Ismael Soud{\`e}res}
\thanks{this work has been partially supported by a Marie Curie Early
  Stage Training fellowship for the AAG network at Durham University.}
\address{Institut de Mathématiques de Jussieu (IMJ), Université Paris
  Diderot -- Paris 7, 175 rue du Chevaleret, 75013 Paris}
\date{\today}
\begin{document}

\maketitle
\tableofcontents

\section*{Introduction}
For a $p$-tuple $\mathbf{k} = (k_1,\ldots , k_p)$ of positive integers and
$k_1 \geqs 2$, the  multiple zeta value $\zeta(\mathbf{k})$ is defined as
\[\zeta(\mathbf{k}) = \sum_{n_1>\ldots>n_p>0} \frac{1}{n_1^{k_1}\cdots
  n_p^{k_p}}.\] 
These values satisfy two families of algebraic (quadratic) relations
known as double shuffle relations, or shuffle and stuffle, described below.

In \cite{MZMSGM} A.B. Goncharov and Y. Manin define a motivic version of
multiple zeta values using certain framed mixed Tate motives attached
to moduli spaces of genus $0$ curves. In this context, the real
multiple zeta values appear naturally as periods of those motives attached to
the moduli
spaces of curves. They do not prove the double shuffle relations directly,
referring instead to previous work by A.B. Goncharov in which, using a
different definition of motivic multiple polylogarithms based on
$(\p^1)^n$ rather than moduli spaces, the motivic double shuffle
relations are shown via results on variations of mixed Hodge structure.

The goal of this article is to give an elementary proof of the double
shuffle relations directly for the Goncharov and Manin motivic multiple
zeta values. The shuffle relation (Proposition \ref{motivicshu}) is
straightforward, but for the
stuffle (Proposition \ref{stm}) we use a modification of a method
first introduced by P. Cartier for the purpose of proving
stuffle for the real multiple zeta values via integrals and blow-up sequences.
In this article, we will work over the base field $\Q$.

\section{Integral representation of the double shuffle relations}
\subsection{Series representation of the stuffle relations}\label{serstusec}

The stuffle product of a $p$-tuple $\mathbf{k}=(k_1,\ldots , k_p)$ and a
$q$-tuple $\mathbf{l}=(l_1,\ldots ,l_q)$ is defined recursively by the
formula: 
\begin{multline} \label{sturec}\mathbf{k}*\mathbf{l}=(\mathbf{k}*(l_1,\ldots
,l_{q-1}))\cdot l_q +((k_1,\ldots ,k_{p-1})*\mathbf{l})\cdot k_p \\ + (
(k_1,\ldots , k_{p-1})*(l_1,\ldots , l_{q-1}))\cdot (k_p + l_q)\end{multline}
and $\mathbf{k}*()=()*\mathbf{k}=\mathbf{k}$. Here the $+$ is a formal sum,
$A\cdot
a$ means that we concatenate $a$ at the end of the tuple $A$ and
$\cdot$ is linear in $A$.

Let $\mathbf{k}$ and $\mathbf{l}$ be two such tuples of integers. 
% If $\sigma$
% is a term of the formal sum $\mathbf{k}*\mathbf{l}$ 
%whose coefficients are
% all equal to $1$, we will write $\sigma \in
% \on{st}(\mathbf{k},\mathbf{l})$. 
We will write $\on{st}(\mb k, \mb l)$ for the set of individual terms in
the formal sum $\mathbf{k}*\mathbf{l}$ whose coefficients are
all equal to $1$. Such a generic term is then denoted by $\sigma \in
\on{st}(\mb k, \mb l)$.

In order
to multiply two multiple zeta values $\zeta(\mathbf{k})$ and
$\zeta(\mathbf{l})$,
we split the summation domain of the product
$\zeta(\mathbf{k})\zeta(\mathbf{l})$
\[\left\{0 < n_1 <\ldots < n_p\right\} \times \left\{0 < m_1 <\ldots<
  m_q\right\}\] 
into all the domains that preserve the order of the $n_i$ as well as the order
of the
$m_j$ and into the boundary domains where some $n_i$ are equal
to some $m_j$. We obtain the following well-known proposition, giving
the quadratic relations (\ref{serstu}) between multiple zeta
values known as the \emph{stuffle relations}:
\begin{prop} \label{serstuprop}Let $\mathbf{k}=(k_1,\ldots ,k_p)$ and
  $\mathbf{l}=(l_1,\ldots ,l_q)$ as above with $k_1$, $l_1 \geqs 2$. Then
  we have:
\eq\label{serstu}\zeta(\mathbf{k})\zeta(\mathbf{l})=\left(\sum_{n_1>\ldots
    >n_p>0}\frac{1}{n^{k_1}_1\cdots n^{k_p}_p}\right)\left(
    \sum_{m_1>\ldots >m_q>0}\frac{1}{m^{l_1}_1\cdots
      m^{l_q}_q}\right)=\sum_{\sigma \in \on{st}(\mathbf{k},\mathbf{l})}
\zeta(\sigma).\eqf
\end{prop}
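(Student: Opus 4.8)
The plan is to use absolute convergence and then reorganize the double sum according to how the two index sequences interleave. Since $k_1, l_1 \geqs 2$, each of $\zeta(\mathbf{k})$, $\zeta(\mathbf{l})$ converges, and because every summand is positive the product
\[
\zeta(\mathbf{k})\zeta(\mathbf{l}) = \sum_{\substack{n_1 > \cdots > n_p > 0 \\ m_1 > \cdots > m_q > 0}} \frac{1}{n_1^{k_1}\cdots n_p^{k_p}\, m_1^{l_1}\cdots m_q^{l_q}}
\]
converges unconditionally, so I am free to group and reorder its terms. The whole argument consists in choosing the grouping dictated by the interleaving of the indices.

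First I would attach to every pair $(n_1 > \cdots > n_p)$, $(m_1 > \cdots > m_q)$ its \emph{interleaving pattern}. Let $v_1 > v_2 > \cdots > v_r$ be the distinct values in $\{n_i\} \cup \{m_j\}$. Because the $n_i$ are strictly decreasing and likewise the $m_j$, each $v_s$ is hit by at most one $n_i$ and at most one $m_j$; recording for each slot whether $v_s$ is some $n_i$ only, some $m_j$ only, or one of each, together with the induced exponent
\[
e_s = \begin{cases} k_i & v_s = n_i \text{ only},\\ l_j & v_s = m_j \text{ only},\\ k_i + l_j & v_s = n_i = m_j,\end{cases}
\]
produces a tuple $\sigma = (e_1, \ldots, e_r)$. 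Since the $n$-slots carry $k_1, \ldots, k_p$ in order and the $m$-slots carry $l_1, \ldots, l_q$ in order, $\sigma$ is exactly one of the individual interleaving-with-ties terms of $\mathbf{k}*\mathbf{l}$.

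Next I would verify two points. First, for a fixed configuration the factorization $\prod_i n_i^{-k_i}\prod_j m_j^{-l_j} = \prod_{s=1}^r v_s^{-e_s}$ holds, since each value collects precisely the exponents of the indices equal to it. Second, the grouping by pattern is a bijection: a pattern together with a strictly decreasing sequence $v_1 > \cdots > v_r > 0$ reconstructs the pair $((n_i),(m_j))$ uniquely, the $n$-slots and tied slots recovering the $n_i$ and the $m$-slots and tied slots recovering the $m_j$. Hence summing over configurations of a fixed pattern $\sigma$ equals $\sum_{v_1 > \cdots > v_r > 0} \prod_s v_s^{-e_s} = \zeta(\sigma)$; note that the leading exponent $e_1$ is $k_1$, $l_1$, or $k_1 + l_1$, hence $\geqs 2$, so each $\zeta(\sigma)$ converges.

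The remaining and main step is to match the set of patterns with $\on{st}(\mathbf{k}, \mathbf{l})$ as generated by the recursion (\ref{sturec}). I would induct on $p + q$, classifying each pattern by its smallest slot $v_r$, which is either $n_p$ only, $m_q$ only, or $n_p = m_q$; deleting that slot leaves a pattern of $(\mathbf{k}, (l_1,\ldots,l_{q-1}))$, of $((k_1,\ldots,k_{p-1}), \mathbf{l})$, or of $((k_1,\ldots,k_{p-1}),(l_1,\ldots,l_{q-1}))$, and reinstating it appends $l_q$, $k_p$, or $k_p + l_q$. This is precisely the three-term recursion (\ref{sturec}), with base case $\mathbf{k}*() = \mathbf{k}$ given by the single empty interleaving. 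The subtlety to handle with care here is multiplicity: distinct patterns may yield the same abstract tuple (for instance $\zeta(2)\zeta(2)$ produces $(2,2)$ twice), so $\on{st}(\mathbf{k},\mathbf{l})$ must be read as the multiset of individual coefficient-one summands, and it is with this multiset that the bijection is established. Granting this identification, summing over all patterns yields $\sum_{\sigma \in \on{st}(\mathbf{k},\mathbf{l})} \zeta(\sigma)$, which is the claim.
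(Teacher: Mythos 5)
Your argument is correct and is essentially the paper's own: the paper derives this proposition precisely by splitting the summation domain $\{n_1>\cdots>n_p>0\}\times\{m_1>\cdots>m_q>0\}$ into the subdomains preserving both orders together with the boundary domains where some $n_i=m_j$ (stating the rest as well known), and your interleaving patterns, the multiset reading of $\on{st}(\mathbf{k},\mathbf{l})$, and the induction matching the recursion (\ref{sturec}) simply make that sketch rigorous. One cosmetic slip: in your final case analysis the three lists are enumerated in mismatched order ($v_r=n_p$ only should pair with $((k_1,\ldots,k_{p-1}),\mathbf{l})$ and append $k_p$, and $v_r=m_q$ only with $(\mathbf{k},(l_1,\ldots,l_{q-1}))$ and append $l_q$), but the intended correspondence is clear and the argument stands.
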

%\begin{exm} We have:
%$$\zeta(2)\zeta(2)=\zeta(2,2)+\zeta(2,2)+\zeta(4).$$
%\end{exm}
\subsection{Integral representation of the shuffle relations}
To the tuple $\mathbf{k}$, with $n=k_1 +\cdots + k_p$, we associate the
$n$-tuple:
\[\ol{k}=(\underbrace{0,\ldots , 0}_{k_1-1\mx{ times}}, 1,\ldots
  ,\underbrace{0,\ldots ,0}_{k_p-1\mx{ times}},
  1)=(\ve_n,\ldots,\ve_1)\]
  and the differential form, introduced by Kontsevich
\begin{align} \label{Kontform}
 \om_{\mathbf{k}}=\om_{\ol{k}}= (-1)^p\fr{dt_1}{t_1-\ve_1}\w 
\cdots\w \fr{dt_n}{t_n-\ve_n}.
\end{align}

%$$\om_{\mathbf{k}}=\om_{\ol{k}}= \fr{dt_n}{t_n}\w \cdots\w
%\fr{dt_{n-k_p+1}}{1-t_{n-k_p+1}}\w \cdots\w \fr{dt_{k_1}}{t_{k_1}}\w
%\cdots\w \fr{dt_1}{1-t_1}.$$
Then, setting  $\Delta_n=\{0 <t_1 <\ldots <t_n< 1\}$, direct
integration yields:
\[\zeta(\mathbf{k})=\Intd{n}\om_{\mathbf{k}}.\]
The shuffle product of an $n$-tuple $(e_1,\ldots ,e_n)=e_1
\cdot \ol{e}$ and an $m$-tuple $(f_1,\ldots ,f_m)=f_1 \cdot
\ol{f}$
is defined recursively by:
\eq(e_1,\ldots ,e_n)\sha(f_1,\ldots ,f_m)=
e_1\cdot(\ol{e}\sha 
(f_1\cdot \ol{f})) + f_1\cdot((e_1\cdot \ol{e})\sha 
\ol{f})\label{shurec}\eqf
and $\ol{e}\sha()=()\sha\ol{e} = \ol{e}$. Here, as above, the $+$ is a formal
sum, $b\cdot B$ means that we concatenate $b$ at the beginning of the tuple
$B$ and $\cdot$ is linear in $B$.

Let $\mathbf{k}$ and $\mathbf{l}$ be two tuples of integers as above. 
% If
% $\sigma$ is a term of the formal sum $\ol{k} \sha \ol{l}$ whose
% coefficients are all equal to $1$, we will
% write $\sigma \in \on{sh}(\ol{k},\ol{l})$.
We will write $\on{sh}(\ol k, \ol l)$ for the set of the individual terms in
the formal sum $\ol{k}\sha\ol{l}$ whose coefficients are
all equal to $1$. Such a generic term is then denoted by $\sigma \in
\on{sh}(\ol k, \ol l)$ and can be identified with a unique permutation
$\tilde{\sigma}$ of $\{1,\ldots n+m\}$ such that
$\tilde{\sigma}(1)<\ldots<\tilde{\sigma}(n)$ and
$\tilde{\sigma}(n+1)<\ldots<\tilde{\sigma}(n+m)$. The permutation
$\tilde{\sigma}$ will simply be denoted by $\sigma$ when the context will be
clear enough.

We will put an index $\sigma$ on any object which naturally depends on
a shuffle. The following proposition yields the
quadratic relations (\ref{shint}) known as the \emph{shuffle relations}.
\begin{prop}\label{shintprop}Let $\mathbf{k}= (k_1,\ldots ,k_p)$ and
  $\mathbf{l}=(l_1,\ldots ,l_q)$ with $k_1$, $l_1 \geqs 2$. Then:
\eq
\label{shint}\Intd{n}\om_{\ol{k}}\Intd{m}\om_{\ol{l}}=\sum_{\sigma \in
  \on{sh}(\ol{k},\ol{l})}\Intd{n+m}\om_{\sigma}.\eqf
\end{prop}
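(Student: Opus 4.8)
The plan is to recognise the left-hand side as a single integral over the product simplex and then to invoke the classical simplicial subdivision underlying Chen's shuffle formula for iterated integrals. First I would apply Fubini to rewrite
\[\Intd{n}\om_{\ol{k}}\,\Intd{m}\om_{\ol{l}} = \int_{\Delta_n \times \Delta_m} \om_{\ol{k}}(t_1,\ldots,t_n)\w\om_{\ol{l}}(s_1,\ldots,s_m),\]
the integral of an $(n+m)$-form over the $(n+m)$-dimensional domain $\Delta_n \times \Delta_m = \{0<t_1<\cdots<t_n<1\}\times\{0<s_1<\cdots<s_m<1\}$. Since $k_1,l_1\geqs 2$ both iterated integrals on the left converge, the integrand is absolutely integrable on the product, and every manipulation below is legitimate; the same hypothesis guarantees that each resulting $\om_\sigma$ is integrable on $\Delta_{n+m}$.

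Next I would subdivide the product domain. Away from the finitely many hyperplanes $\{t_i=s_j\}$, which have Lebesgue measure zero and contribute nothing to the integral, every point of $\Delta_n\times\Delta_m$ has all its coordinates distinct, and sorting them into increasing order $0<u_1<\cdots<u_{n+m}<1$ determines a total order on the combined variables refining both $t_1<\cdots<t_n$ and $s_1<\cdots<s_m$. Such orders are precisely the shuffles $\sigma\in\on{sh}(\ol k,\ol l)$, equivalently the permutations $\tilde\sigma$ described in the excerpt. This yields a decomposition, up to measure zero,
\[\Delta_n\times\Delta_m = \bigsqcup_{\sigma\in\on{sh}(\ol k,\ol l)} D_\sigma, \qquad D_\sigma = \{(t,s):\ \text{the combined order is } \sigma\},\]
into $\binom{n+m}{n}$ open simplices, each carried onto $\Delta_{n+m}$ by the relabelling $(t,s)\mapsto(u_1,\ldots,u_{n+m})$.

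Finally I would compute the restriction of the integrand to $D_\sigma$ and check that the relabelling carries $\om_{\ol{k}}\w\om_{\ol{l}}$ to $\om_\sigma$, the Kontsevich form attached to the sequence of $\ve$'s shuffled according to $\tilde\sigma$. This sign bookkeeping is the crux of the argument: reordering the wedge of the factors $\frac{du_i}{u_i-\ve}$ into the order $u_1,\ldots,u_{n+m}$ produces the sign of $\tilde\sigma$, while the same permutation relates the orientation of $D_\sigma$ to that of $\Delta_{n+m}$, so the two signs cancel and leave exactly the prefactor $(-1)^{p+q}$ demanded by (\ref{Kontform}) for a tuple carrying $p+q$ ones. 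Summing the identities $\int_{D_\sigma}\om_{\ol{k}}\w\om_{\ol{l}}=\Intd{n+m}\om_\sigma$ over all $\sigma$ then gives (\ref{shint}). The genuine obstacle is precisely this sign computation together with verifying that the recursion (\ref{shurec}) reproduces the combinatorics of the subdivision; I would settle both by an induction on $n+m$ mirroring (\ref{shurec}), at each stage splitting off the largest combined coordinate $u_{n+m}$ according to whether it belongs to the $t$-block or the $s$-block, in accordance with the two summands of (\ref{shurec}).
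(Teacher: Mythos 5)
Your proposal is correct and follows essentially the same route as the paper: rewrite the product as a single integral over $\Delta_n\times\Delta_m$ via Fubini, decompose that domain up to measure zero into the $\binom{n+m}{n}$ simplices indexed by shuffles, and identify each resulting integral with one term $\Intd{n+m}\om_\sigma$ after relabelling the variables. The only difference is that you spell out the sign cancellation and the inductive matching with the recursion (\ref{shurec}), details the paper compresses into the phrase ``up to the numbering of the variables.''
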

\begin{proof}Let $n = k_1 + ... + k_p$ and $m = l_1 + ... + l_q$. Then
  we have:
\begin{align*}\Intd{n}\om_{\ol{k}}\Intd{m}\om_{\ol{l}}& =\left(\Intd{n}
\fr{dt_1}{1-t_1}\cdots
\fr{dt_n}{t_n}\right)\left(\Intd{m}\fr{dt_{n+1}}{1-t_{n+1}}\cdots
\fr{dt_{n+m}}{t_{n+m}}\right)\\
&=\Intd{}\fr{dt_1}{1-t_1}\cdots \fr{dt_n}{t_n}
\fr{dt_{n+1}}{1-t_{n+1}}\cdots \fr{dt_{n+m}}{t_{n+m}}.
\end{align*}
The set $\Delta=\{0 <t_1<\ldots < t_n < 1\} \times \{0
<t_{n+1}<\ldots<t_{n+m} < 1\}$ can be, up to codimension $1$ sets, split
into a union of simplices
\[%\Delta=
\coprod_{\sigma \in
  \on{sh}(\ent{1,n},\ent{n+1,m})}\Delta_{\sigma}\qquad 
\mx{with }\Delta_{\sigma}=\{0 < t_{\sigma(1)}< t_{\sigma(2)} < ... <
  t_{\sigma(n+m)} < 1\},\]
where $\ent{a,b}$ denotes the ordered sequence of integers from $a$ to
$b$.

The integral over $\Delta$ is the sum of the integrals over the individual
simplices. But the integral over one of these simplices is, up
to the numbering of the variables, exactly one term of the sum $\ds
\sum_{\sigma\in\on{sh}(\ol{k},\ol{l})}\Intd{n+m} \om_{\sigma}$.
\end{proof}

\subsection{The stuffle relations in terms of integrals}\label{secstuint}
We explain here ideas % of Pierre Cartier, 
already written in articles
of Goncharov \cite{PMMGon} and in Francis Brown's Ph.D. thesis
\cite{Brownphd}, showing how to express the stuffle relations
(\ref{serstu}) in terms of integrals.

\subsubsection{Example}We have $\zeta(2)=\Intd{2} \fr{dt_2}{t_2}
  \fr{dt_1}{1-t_1}$. The change of variables $t_2=x_1$ and
  $t_1=x_1x_2$ gives:
\[\zeta(2)=\Inc{2}\fr{dx_1}{x_1}\fr{x_1dx_2}{1-x_1x_2}=\Inc{2}\fr{dx_1
  dx_2}{1-x_1x_2}.\]
This change of variables is nothing but the blow-up of the point $(0,
0)$ in the projective plane, given in $n$ dimensions
by a sequence of blow-ups:
\eq \label{blowup} t_n = x_1,\; t_{n-1}=x_1x_2,\; \ldots,\; t_1 =
x_1...x_n.\eqf
We will write $\dx[n]$ for $dx_1\cdots dx_n$ where $n$ is the number
of variables under the integral. Using the change of variables
(\ref{blowup}) for $n = 4$ we write the Kontsevich forms as follows:
\[\zeta(4) =\Inc{4}\fr{\dx[4]}{1-x_1x_2x_3x_4}\,
,\quad \zeta(2,2)=\Inc{4}\fr{x_1x_2 \dx[4]}{(1-x_1x_2)(1-x_1x_2x_3x_4)}\]
and
\[
\zeta(2)\zeta(2)=\Inc{4}\fr{1}{(1-x_1x_2)}\fr{1}{(1-x_3x_4)}\dx[4]. 
\]
%%%%%%%%%%%%%%%%%%%%%%
For any variables $\alpha$ and $\beta$ we have the equality:
\eq \label{formule1}\fr{1}{(1 -\alpha)(1 -\beta)}=\fr{\alpha}{(1 -
  \alpha)(1-\alpha \beta)}+\fr{\beta}{(1 -\beta)(1 - \beta \alpha )}
    +\fr{1}{1-\alpha \beta}.\eqf 
This identity will be the key of this section.

Setting $\alpha=x_1x_2$ and $\beta=x_3x_4$ and applying
(\ref{formule1}), we recover the stuffle relation: 
\begin{gather*}
\begin{split}
\zeta(2)\zeta(2)=\Inc{4}\left(\fr{x_1x_2}{
(1-x_1x_2)(1-x_1x_2x_3x_4) } \right.&
+\fr{x_3x_4}{(1-x_3x_4)(1-x_3x_4x_1x_2)}
\\
&\quad \left.+\fr{1}{1-x_1x_2x_3x_4}\right)\,
\dx[4]
\end{split}\\[2mm]
\zeta(2)\zeta(2)=\zeta(2, 2)+\zeta(2, 2)+\zeta(4).
\end{gather*}
%given in section \ref{serstusec}.
%%%%%%%%%%%%%%%%%%%
\subsubsection{General case} We will show that the Cartier decomposition
(\ref{stupropeq}) below makes it possible to express all the stuffle
relations in terms of integrals as in the example above.

 Let $\mathbf{k}=(k_1,\ldots,k_p)$ and
$\mathbf{l}=(l_1,\ldots,l_q)$ be two tuples of integers with $k_1$, $l_1
\geqs 2$. As above, if $\sigma$ is a term of the formal sum
$\mathbf{k}*\mathbf{l}$, we will write $\sigma \in
\on{st}(\mathbf{k},\mathbf{l})$. We
will put an index $\sigma$ on any object which naturally depends on a
stuffle.

Let $\mathbf{k}=(k_1,\ldots,k_p)$ be as above and $n=k_1+\cdots+k_p$. We define
$\f{k}{p}$ to be the function of $n$ variables defined on $[0,1]^n$
given by:
\begin{multline*}\f{k}{p}(x_1,\ldots,x_n)=\fr{1}{1-x_1\cdots x_{k_1}} 
\fr{x_1\cdots  x_{k_1} }{1-x_1\cdots x_{k_1}x_{k_1+1}\cdots x_{k_1+k_2}} \\
  \fr{x_1\cdots x_{k_1+k_2}}{1-x_1\cdots x_{k_1+k_2+k_3}} \cdots
 \fr{x_1\cdots x_{k_1+...+k_{p-1}}}{1-x_1\cdots x_{k_1+\cdots +k_p}}.
\end{multline*}
%%%%%%%%
\begin{prop}\label{intcuprop} For all $p$-tuples of integers
  $(k_1,\ldots,k_p)$ with $k_1\geqs 2$, we have (with $n = k_1 +\cdots
 + k_p)$:
\eq\label{intcu} \zeta(k_1,\ldots,k_p) =
\Inc{n}\f{k}{p}(x_1,\ldots,x_n)\dx[n].\eqf
\end{prop}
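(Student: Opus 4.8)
The plan is to obtain (\ref{intcu}) directly from the Kontsevich representation $\zeta(\mathbf{k}) = \Intd{n}\om_{\mathbf{k}}$ by performing the change of variables (\ref{blowup}) and checking that it converts the simplex integral into the cube integral. Writing $\om_{\mathbf{k}}$ out from (\ref{Kontform}) and absorbing the global sign $(-1)^p$ into the $p$ factors with $\ve_i = 1$, each of which contributes $\fr{dt_i}{t_i-1} = -\fr{dt_i}{1-t_i}$, one sees that $\zeta(\mathbf{k})$ is the convergent, positive integral over $\Delta_n = \{0<t_1<\cdots<t_n<1\}$ of $\prod_{i:\ve_i=1}\fr{1}{1-t_i}\,\prod_{i:\ve_i=0}\fr{1}{t_i}$ against $dt_1\cdots dt_n$.

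First I would record how (\ref{blowup}) acts on the domain and on the volume form. Since $t_i = x_1\cdots x_{n+1-i}$, the inequalities $t_i<t_{i+1}$ are equivalent to $x_{n+1-i}<1$ and the top constraint $t_n<1$ to $x_1<1$; together with the positivity of the $t_i$, which forces all $x_l>0$, this shows that $\Delta_n$ corresponds, up to a set of measure zero, to the open cube $(0,1)^n$. The substitution is triangular after reordering: the map $(x_1,\ldots,x_n)\mapsto(x_1,x_1x_2,\ldots,x_1\cdots x_n)$ has a lower-triangular Jacobian, and a short computation gives absolute value $\prod_{l=1}^{n-1}x_l^{\,n-l}$ on the cube; the passage to $(t_1,\ldots,t_n)$ only reverses the order of these coordinates and hence does not change the absolute value of the determinant.

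The heart of the matter, and the step I expect to be the only delicate one, is to identify the transformed integrand with $\f{k}{p}$. Setting $K_r = k_1+\cdots+k_r$, so that $\ve_i=1$ exactly when $n+1-i\in\{K_1,\ldots,K_p\}$, the factors with $\ve_i=1$ become $\prod_{r=1}^{p}\fr{1}{1-x_1\cdots x_{K_r}}$, which are precisely the denominators appearing in $\f{k}{p}$. It then remains to show that the product of the Jacobian $\prod_l x_l^{\,n-l}$ with the factors $\fr{1}{t_i}=\fr{1}{x_1\cdots x_{n+1-i}}$ coming from the $\ve_i=0$ positions reproduces the numerator $\prod_{r=1}^{p-1}(x_1\cdots x_{K_r})$ of $\f{k}{p}$. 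I would settle this by a bookkeeping of exponents: for each fixed $l$ the exponent of $x_l$ on the left is $(n-l)-\#\{j\notin\{K_1,\ldots,K_p\}: j\geqs l\}$, and since $\#\{j:j\geqs l\}=n-l+1$ while $K_p=n\geqs l$, this collapses to $\#\{r:K_r\geqs l\}-1$, which is exactly the exponent of $x_l$ in $\prod_{r=1}^{p-1}(x_1\cdots x_{K_r})$.

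Matching exponents for all $l$ identifies the two rational functions, so the integrand over $(0,1)^n$ is $\f{k}{p}$ and (\ref{intcu}) follows. As a check I would reconcile the computation with the worked cases $\zeta(4)$ (with $p=1$ and empty numerator) and $\zeta(2,2)$ (with $p=2$) displayed above. The hypothesis $k_1\geqs 2$ forces $\ve_n=0$, so that the outermost factor is $\fr{1}{t_n}$ rather than $\fr{1}{1-t_n}$; this is what guarantees convergence near $t_n=1$ and hence finiteness of both sides, while playing no role in the formal identity of the integrands.
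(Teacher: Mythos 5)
Your proof is correct and takes essentially the same route as the paper: both apply the change of variables (\ref{blowup}) to the Kontsevich representation $\zeta(\mathbf{k})=\Intd{n}\om_{\mathbf{k}}$ and identify the resulting integrand over the cube with $\f{k}{p}$. The only difference is bookkeeping --- the paper cancels each $\fr{1}{t_i}$ locally against the factor $x_1\cdots x_{n-i+1}$ that $dt_{i-1}$ contributes to the wedge product, whereas you compute the full Jacobian $\prod_{l}x_l^{\,n-l}$ and match exponents globally --- but the underlying computation is identical.
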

\begin{proof} Let $\om_{\mathbf{k}}$ be the Kontsevich form associated to
  a $p$-tuple $(k_1,\ldots, k_p)$ with $n=k_1 +\cdots + k_p$, so that
  $\zeta(k_1,\ldots, k_p)=\int_{\Delta_n}\oms k$.

Applying the change of variables (\ref{blowup}) to $\oms k$, we see that for each
term
$\fr{dt_i}{t_i}$, there arises from the $\fr{1}{t_i}$ a term
$\fr{1}{x_1 \cdots x_{n-i+1}}$ which cancels with
  $\fr{dt_{i-1}}{\cdots}=\fr{x_1\cdots
      x_{n-i+1}dx_{n-i+2}}{\cdots}$. This gives the result.
\end{proof}
To derive the stuffle relations in general using integrals and the
functions $\f{k}{p}$, we will use the following notation.
%%%%%%%

\begin{nota} Let $\mathbf{k}$ be a sequence $(k_1,\ldots, k_p)$, $n = k_1
  +\cdots + k_p$. We have $n$ variables $x_1,\ldots,x_n$.
% We will  write:
\begin{itemize}
\item For any sequence $\mathbf{a}=(a_1,\ldots,a_r)$, we will write
  $\Prod \mathbf{a}=a_1\cdots a_r$. 
\item  The sequence $(x_1, \ldots, x_n)$ will be written
  $\mathbf{x}$. We set $\yks 1=(x_1,\ldots,x_{k_1})$ and
  \[\yks i =(x_{k_1+\cdots +k_{i-1}+1},\ldots , x_{k_1+\cdots +k_i}
  ),\]
so the $\mathbf{x}$ is the concatenation of sequences $\yks 1\cdots \yks p$. 
  
\item The sequence $(x_1,\ldots,x_{k_1+\cdots+k_i})=\yks 1 \cdots \yks
  i$ will be denoted
  by $\yks{\leqs i}$. If
  $\mathbf{k}=(\mathbf{k_0},k_p)$, $\ykz =\yks{\leqs p-1}$ will be the
  sequence 
\[
 (x_1,\ldots,x_{k_1+\cdots+k_{p-1}}).
\]
%\item  $\yks 1= and $\yk 1= x_1\cdots x_{k_1}$, and $y_{k_i}=$ and
%    $y_{\mathbf{k}_i}=$.
%\item We set $y_{\mathbf{k}}=(x_1,\ldots,x_n)$ and
\item If $\mathbf{l}$ is a $q$-tuple with $l_1+ \cdots +l_q=m$ and
  $\sigma \in \on{st}(\mathbf{k},\mathbf{l})$, $y_{\sigma}$ will be the
  sequence in the variables $x_1,\ldots,x_n,x_1',\ldots, x_m'$ in
  which each group of variables
  \[\yks i=(x_{k_1+\cdots+k_{i-1}+1},\ldots , x_{k_1+\cdots+k_i}
  )\]
 \[ (\mx{resp. } \yls{j}=(x_{l_1+\cdots+ l_{j-1}+1}',
  \ldots,x_{l_1+\cdots+l_j}'))\] is in the
  position of $k_i$ (resp. $l_j$) in $\sigma$. Components of $\sigma$
  of the form $k_i +
  l_j$ give rise to subsequences like \[(x_{k_1+\cdots+k_{i-1}+1},\ldots ,
  x_{k_1+\cdots+k_i} , 
  x_{l_1+\cdots+l_{j-1}+1}', \ldots , x_{l_1+\cdots+l_j}' ) =( \yks{i}
  , \yls j ).\]

\item Following these notations, products $x_1 \cdots x_{k_1}$, 
  $x_{k_1+\cdots+k_{i-1}+1}\cdots x_{k_1+\cdots+k_i}$, $x_{1}\cdots
  x_{k_1+\cdots+ k_i}$ will be written
  respectively $\yk 1$, $\yk i$, $\ysk i$. As $\yks{\leqs p-1}=\ykz$
  and $\yks{\leqs p}=\mathbf{x}$,
   products $\ysk{p-1}$ and $\ysk p$ will be written $\Prod \ykz$ and
   $\Prod\mathbf{x}$.

We remark that for each $\sigma \in \on{st}(\mathbf k ,\mathbf l)$,
$\Prod \sigma= \Prod \mathbf x \Prod \mathbf{x'}$.
\end{itemize}
\end{nota}
%%%%%%%%%
\begin{rem}\label{remf}
Let $(k_1,\ldots, k_p)=(\mathbf{k_0},k_p)$ be a sequence of
  integers. Then: 
\[\f{k}{p}(\mb{x})
%\f{k}{p-1}\fr{\yk{1}\cdots\yk{p-1}}{1 - \yk{1}\cdots \yk{p}}
=\f{k}{p-1}(\yks{\leqs
p-1})\fr{\ysk{p-1}}{1-\ysk{p}}=\f{k}{p-1}(\ykz)\fr{\pykz}{1-\pyk}.\]
\end{rem}
%%%%
\begin{prop}\label{stuprop}Let $\mathbf{k}=(k_1,\ldots , k_p)$ and
  $\mathbf{l}=(l_1,\ldots
  ,l_q)$ be two sequences of weight $n$ and $m$. Then:
\eq\label{stupropeq}\f{k}{p}(\yks{1},\ldots,\yks{p})\cdot
\f{l}{q}(\yls{1},\ldots ,\yls q ) =\sum_{\sigma \in
\on{st}(\mathbf{k},\mathbf{l})}
f_{\sigma}(y_{\sigma}).\eqf
\end{prop}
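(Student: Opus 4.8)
The plan is to argue by induction on the total number of parts $p+q$. The base cases $p=0$ or $q=0$ are immediate: since $\mathbf{k}*()=()*\mathbf{k}=\mathbf{k}$ and $f_{()}=1$ (the empty product), the set $\on{st}(\mathbf{k},())$ is the singleton $\{\mathbf{k}\}$ with $y_{\mathbf{k}}=\mathbf{x}$, so both sides reduce to $\f{k}{p}(\mathbf{x})$. For the inductive step the two ingredients are Remark \ref{remf}, which peels off the last factor of each $f$, and the partial-fraction identity (\ref{formule1}), whose three-term shape is tailored to reproduce the three-term stuffle recursion (\ref{sturec}).

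Write $\mathbf{k}=(\mathbf{k_0},k_p)$ and $\mathbf{l}=(\mathbf{l_0},l_q)$. First I would apply Remark \ref{remf} to both factors on the left-hand side and collect the numerators, so that
\[\f{k}{p}(\mathbf{x})\,\f{l}{q}(\mathbf{x'})=\f{k}{p-1}(\ykz)\,\f{l}{q-1}(\ylz)\,\pykz\,\pylz\,\fr{1}{(1-\pyk)(1-\pyl)}.\]
Next I apply (\ref{formule1}) with $\alpha=\pyk$ and $\beta=\pyl$, splitting the last factor into the three pieces
\[\fr{\pyk}{(1-\pyk)(1-\pyk\pyl)},\qquad\fr{\pyl}{(1-\pyl)(1-\pyk\pyl)},\qquad\fr{1}{1-\pyk\pyl}.\]

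The core of the argument is to recognise each resulting term as one family in (\ref{sturec}). For the third piece, the induction hypothesis applied to $\mathbf{k_0}$ and $\mathbf{l_0}$ replaces $\f{k}{p-1}(\ykz)\,\f{l}{q-1}(\ylz)$ by $\sum_{\tau\in\on{st}(\mathbf{k_0},\mathbf{l_0})}f_{\tau}(y_{\tau})$; since for $\sigma=\tau\cdot(k_p+l_q)$ the variables preceding the final group $(\yks p,\yls q)$ are exactly those of $\mathbf{k_0}$ and $\mathbf{l_0}$, Remark \ref{remf} applied to $\sigma$ (together with $\Prod\sigma=\pyk\pyl$) gives $f_{\sigma}(y_{\sigma})=f_{\tau}(y_{\tau})\,\pykz\pylz/(1-\pyk\pyl)$, so this piece equals $\sum_{\sigma=\tau\cdot(k_p+l_q)}f_{\sigma}(y_{\sigma})$. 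For the first piece I would regroup the factors so that $\f{k}{p-1}(\ykz)\,\pykz/(1-\pyk)$ reassembles into $\f{k}{p}(\mathbf{x})$ by Remark \ref{remf}, leaving
\[\f{k}{p}(\mathbf{x})\,\f{l}{q-1}(\ylz)\,\fr{\pyk\pylz}{1-\pyk\pyl};\]
the induction hypothesis for $\mathbf{k}$ and $\mathbf{l_0}$ then matches this with $\sum_{\sigma=\tau\cdot l_q}f_{\sigma}(y_{\sigma})$. The second piece is symmetric and yields $\sum_{\sigma=\tau\cdot k_p}f_{\sigma}(y_{\sigma})$. Summing the three contributions and comparing with (\ref{sturec}) closes the induction.

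The step I expect to be the main obstacle is the bookkeeping in this matching: in each of the three cases one must check that the numerator produced by the partial-fraction expansion is exactly the product of the variables lying before the final block of $\sigma$, so that Remark \ref{remf} applied to $\sigma$ reproduces it. The clean way to control this is to record once and for all that, for $\sigma$ ending in $l_q$ (resp.\ $k_p$, resp.\ $k_p+l_q$), this product equals $\pyk\pylz$ (resp.\ $\pykz\pyl$, resp.\ $\pykz\pylz$), together with the identity $\Prod\sigma=\pyk\pyl$ noted above; the remaining manipulations are the routine algebra of reassembling the factored functions.
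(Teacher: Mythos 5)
Your proof is correct and follows essentially the same route as the paper's: peel off the last factor of each function via Remark \ref{remf}, apply the partial-fraction identity (\ref{formule1}) with $\alpha=\pyk$, $\beta=\pyl$, and match the three resulting pieces with the three terms of the stuffle recursion (\ref{sturec}) by induction. The only differences are cosmetic — you anchor the induction at the empty tuple with $f_{()}=1$ rather than at $p=q=1$, and you spell out the term-by-term matching that the paper summarises by saying the product satisfies the same recursion as the stuffle.
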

\begin{proof}We proceed by induction on the depth of the sequence. The
  recursion formula for the stuffle is given in (\ref{sturec}).

\textbf{If $\mathit{p = q = 1}$:} As we have
\begin{align*}f_n(\yks 1)f_m(\yls 1) & =\fr{1}{1 - \ysk 1}\cdot \fr{1}{1 -
  \ysl 1}=\fr{1}{1 - \pyk}\cdot \fr{1}{1 -
  \pyl}\, , \end{align*}
using the formula \eqref{formule1} with $\alpha=\pyk$ and $\beta=\pyl$ leads
to 
\begin{multline}
 f_n(\yks 1)f_m(\yls 1)  = \fr{\pyk}{(1 -\pyk)(1
-\pyk \pyl)}+
\fr{\pyl }{(1 -\pyl )(1 - \pyl \pyk )} \\
+ \fr{1}{1-\pyk \pyl }.
\end{multline}

\textbf{Inductive step:} Let $(k_1,\ldots , k_p) = (\mathbf{k_0}, k_p)$
and $(l_1,\ldots, l_q) =(\mathbf{l_0}, l_q)$ be two sequences. By Remark
\ref{remf}, the following equality holds
%%%%%%
\[
f_{\mathbf{k_0},k_p}(\ykz , \yks p )f_{\mathbf{l_0},l_q}(\ylz , \yls q )=
f_{\mathbf{k_0}}(\ykz)\fr{\pykz}{1-\pyk }f_{\mathbf{l_0}}(\ylz)
\fr{\pylz}{1-\pyl}.
\]
Applying the formula \eqref{formule1} with $\alpha=\pyk$ and $\beta=\pyl$,
one sees that the RHS of the previous equation is equal to
\begin{multline*}f_{\mathbf{k_0}}(\ykz)f_{\mathbf{l_0}}(\ylz)\cdot
(\pykz \cdot \pylz) % \\[5mm]
\left(\fr{\pyk}{(1-\pyk )(1-\pyk \pyl)}\right. \\[5mm]
\left.
+\fr{\pyl}{(1-\pyl)(1-\pyl \pyk)}% \right. \\[5mm]
%\left.
+\fr{1}{(1-\pyk \pyl)}\right).
\end{multline*}

%%%%%%%%%%%
Expanding and using Remark \ref{remf} we obtain:
\begin{multline} \label{stufunc}
f_{\mathbf{k_0},k_p}(\ykz ,\yks p )f_{\mathbf{l_0},l_q}(\ylz , \yls q)= \\[5mm]
\left(f_{\mathbf{k_0},k_p}({\mathbf{x}}) f_{\mathbf{l_0}}
    (\ylz)\right)\cdot\fr{\pyk \pylz}{1-\pyk \pyl} 
+ \left(f_{\mathbf{k_0}}(\ykz)f_{\mathbf{l_0},l_q}({\mathbf{x'}})\right)\cdot
  \fr{\pyl  \pykz}{1-\pyl  \pyk } \\[5mm]
 +\left(f_{\mathbf{k_0}}(\ykz
    )f_{\mathbf{l_0}}(\ylz)\right) \cdot \fr{\pykz \pylz}{1-\pyk 
    \pyl }.
\end{multline}

%%%%%%%%
Hence, the product of functions $\f{k}{p}$ and $\f{l}{q}$ satisfies a recursion
formula identical to the formula (\ref{sturec}) defining the stuffle
product. Using
induction, the proposition follows.
\end{proof}
\begin{coro}[integral representation of the stuffle] Integrating the
  statement of the previous proposition over the cube and permuting
  the variables in each term of the RHS, we
obtain:
\[\zeta(\mathbf k)\zeta(\mathbf l)=\Inc{n}f_{\mathbf{k}}\dx[n]
\Inc{m}f_{\mathbf{l}}\dx[m] =\Inc{n+m}\sum_{\sigma
  \in \on{st}(\mathbf k,\mathbf l )}f_{\sigma}\; \dx[n+m] =\sum_{\sigma \in
  \on{st}(\mathbf{k},\mathbf{l})}\zeta(\sigma).\]
\end{coro}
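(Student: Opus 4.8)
The plan is to establish the displayed chain of equalities one link at a time, invoking Proposition \ref{intcuprop} at the two ends and Proposition \ref{stuprop} in the middle, with the only genuine ingredient being the invariance of the cube $[0,1]^{n+m}$ and the measure $\dx[n+m]$ under permutations of the coordinates. Since all integrals here are ordinary integrals of nonnegative functions (not differential forms), a coordinate permutation leaves the value unchanged, with no sign or orientation issue to track.

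First I would dispatch the two outer equalities. As $k_1 \geqs 2$ and $l_1 \geqs 2$, Proposition \ref{intcuprop} gives $\zeta(\mathbf{k}) = \Inc{n}\f{k}{p}\,\dx[n]$ and $\zeta(\mathbf{l}) = \Inc{m}\f{l}{q}\,\dx[m]$, which is precisely the leftmost equality. For the rightmost one, note that every $\sigma \in \on{st}(\mathbf{k},\mathbf{l})$ is a tuple of weight $n+m$ whose first entry is one of $k_1$, $l_1$, or $k_1+l_1$ (the stuffle preserves the order of each sequence, so the leading entry comes from a leading block, possibly merged), hence is $\geqs 2$. Thus Proposition \ref{intcuprop} applies to each $\sigma$ and yields $\Inc{n+m}\fs\,\dx[n+m] = \zeta(\sigma)$; summing over $\sigma$ gives the last equality.

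The middle equality is the substance. Here I would rename the $m$ variables of the second factor as $x_1',\ldots,x_m'$ so that, by Fubini, the product of the two integrals becomes a single integral over $[0,1]^{n}\times[0,1]^{m} = [0,1]^{n+m}$ of the product integrand $\f{k}{p}(x_1,\ldots,x_n)\,\f{l}{q}(x_1',\ldots,x_m')$. Proposition \ref{stuprop} rewrites this product as $\sum_{\sigma \in \on{st}(\mathbf{k},\mathbf{l})}\fs(y_\sigma)$, and since the sum is finite it can be pulled outside the integral, giving $\Inc{n+m}\sum_\sigma \fs(y_\sigma)\,\dx[n+m]$. It then remains to identify each summand with $\Inc{n+m}\fs\,\dx[n+m]$ in the standard variables: by construction $y_\sigma$ is a reordering of the full list $x_1,\ldots,x_n,x_1',\ldots,x_m'$, so relabelling it as $(u_1,\ldots,u_{n+m})$ is a coordinate permutation of the cube, under which both $[0,1]^{n+m}$ and $\dx[n+m]$ are invariant; hence the integral is unchanged and the summand equals $\Inc{n+m}\fs\,\dx[n+m]$.

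The one point needing care — the main obstacle, combinatorial rather than analytic — is verifying that $y_\sigma$ really is a bijective relabelling of the $n+m$ integration variables, with none repeated or omitted, so that the relabelling is a legitimate coordinate permutation. This follows from the stuffle bookkeeping in the notation above: each block $\yks{i}$ and each block $\yls{j}$ is placed in exactly one slot of $\sigma$ (merged into a single block $(\yks{i},\yls{j})$ for an entry of the form $k_i+l_j$), so the arguments of $\fs(y_\sigma)$ are exactly $\{x_1,\ldots,x_n,x_1',\ldots,x_m'\}$, each occurring once. Once this is checked, permutation invariance of the Lebesgue integral over the symmetric domain $[0,1]^{n+m}$ closes the argument.
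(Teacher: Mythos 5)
Your proposal is correct and follows essentially the same route as the paper: Proposition \ref{intcuprop} at both ends, Fubini together with Proposition \ref{stuprop} in the middle, permutation invariance of the cube for each term, with positivity of the integrands licensing the manipulations and convergence of each term inherited from the integral representation of the multiple zeta values. The paper's own proof takes this structural chain as read and only records the convergence check, which your appeal to nonnegativity and to Proposition \ref{intcuprop} (noting that each $\sigma$ has leading entry $\geqslant 2$) covers.
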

\begin{proof} We only need to check that all integrals are convergent. As
  all the functions are positive on the integration domain, all
  variable changes are allowed, and we can deduce the convergence of
  each term from the convergence of the iterated integral representation for
  the multiple zeta values. 

Another argument is to remark that the orders of the poles of our
functions along a codimension $k$ subvariety can be at most $k$. Then, for
each integral, a succession of blow-ups ensures that the integral converges.
\end{proof}

\section{Moduli spaces of curves; double shuffle and forgetful maps}
\subsection{Shuffle and moduli spaces of curves}\label{section:shmod}
Let $\mathbf{k}$ and $\mathbf{l}$ be as in the previous section, let $n
=k_1+\cdots+k_p$ and $m =l_1+\cdots+l_q$. Following the article of
Goncharov and Manin \cite{MZMSGM}, we will identify a point of 
$\mn{j}$, the moduli space of curves of genus $0$ with $j+3$ marked points,
 with a sequence $(0, z_1,\ldots , z_j , 1,\infty)$, the $z_i$
being pairwise distinct and distinct from $0$, $1$ and $\infty$, and 
write
$\Phi_j$ for the open cell in $\mn{j}(\R)$ which is mapped onto
$\Delta_j$, the standard simplex, by the map: $\mn{j}\ra (\p^1)^j$,
$(0, z_1,\ldots , z_j , 1,\infty)\mapsto (z_1,\ldots , z_j)$. Then we 
have:
\[\zeta(k_1,\ldots , k_p) =\int_{\Phi_n}\om_{\mathbf{k}}.\]
%%%%%%%%
%%%%%%%%%%%%%%%%%%%%%%%%%%%%%%%%%%%%%%%%%%%%%%
\begin{prop}\label{oublishu}Let $\beta$ be the map defined by
\[\begin{array}{ccc}
\mn{n+m}&\xrightarrow[\hspace{.5cm}]{\beta}&\mn{n}\times
\mn{m} \\
(0, z_1,\ldots , z_{n+m}, 1,\infty)& \longmapsto{}{}&(0,z_1,\ldots , 
z_n,
1,\infty)\times(0, z_{n+1},\ldots , z_{n+m}, 1,\infty).
\end{array}\]
Then, letting $t_i$ be the coordinate such that $t_i(0,z_1,\ldots
,z_{n+m},1,\infty)= z_i$, we have
\[\beta^*(\om_{\mathbf{k}}\w \oms l)=\fr{dt_1}{1-t_1}\w \cdots\w
\fr{dt_n}{t_n }\w \fr{dt_{n+1}}{1-t_{n+1}}\w
\cdots\w\fr{dt_{n+m}}{t_{n+m}}.\] 
Furthermore, if for $\sigma \in \on{sh}(\ent{1, n},\ent{n + 1,
n + m})$ we write $\Phi_{n+m}^{\sigma}$ for the
open cell of $\mn{n+m}(\R)$ in which the points are
in the same order as their indices are in $\sigma$, we have
 \[\beta^{-1}(\Phi_ n \times \Phi_m)=\coprod_{\sigma \in
   \on{sh}(\ent{1, n},\ent{n + 1,
n + m})}\Phi_{n+m}^{\sigma}.\]
\end{prop}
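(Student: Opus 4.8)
The plan is to handle the two assertions in turn: the pullback formula for $\beta^*(\om_{\mathbf{k}}\w\oms l)$ is a coordinate substitution, while the decomposition of $\beta^{-1}(\Phi_n\times\Phi_m)$ is a combinatorial statement that reuses the chamber argument from the proof of Proposition \ref{shintprop}. I would begin by checking that $\beta$ is well defined. A point of $\mn{n+m}$ is a configuration $(0,z_1,\ldots,z_{n+m},1,\infty)$ whose $z_i$ are pairwise distinct and distinct from $0,1,\infty$; discarding $z_{n+1},\ldots,z_{n+m}$ (resp. $z_1,\ldots,z_n$) leaves a valid configuration of $\mn{n}$ (resp. $\mn{m}$), since distinctness passes to subconfigurations and the normalized points $0,1,\infty$ are kept fixed. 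Thus $\beta=(\beta_1,\beta_2)$ is the product of the two forgetful maps, and in terms of the coordinate $t_i$ defined in the statement its two legs act by $\beta_1^{*}t_i=t_i$ for $1\leqs i\leqs n$ and $\beta_2^{*}t_j=t_{n+j}$ for $1\leqs j\leqs m$.

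For the first assertion, recall from the proof of Proposition \ref{shintprop} that in these coordinates $\om_{\mathbf{k}}$ and $\oms l$ are wedge products of one-forms, each factor involving a single coordinate. Since $\beta^{*}$ sends the external product $\om_{\mathbf{k}}\w\oms l$ to $\beta_1^{*}\om_{\mathbf{k}}\w\beta_2^{*}\oms l$, and pullback commutes with $d$ and with $\w$, the computation reduces to the substitutions $t_i\mapsto t_i$ and $t_j\mapsto t_{n+j}$; these concatenate the two forms and yield exactly
\[\beta^*(\om_{\mathbf{k}}\w\oms l)=\fr{dt_1}{1-t_1}\w\cdots\w\fr{dt_n}{t_n}\w\fr{dt_{n+1}}{1-t_{n+1}}\w\cdots\w\fr{dt_{n+m}}{t_{n+m}}.\]
This step is pure bookkeeping and I expect no difficulty with it.

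For the second assertion I would unwind the definitions. The cell $\Phi_n\times\Phi_m$ is cut out by $0<z_1<\cdots<z_n<1$ together with $0<z_{n+1}<\cdots<z_{n+m}<1$, so $\beta^{-1}(\Phi_n\times\Phi_m)$ is the set of configurations of $\mn{n+m}(\R)$ whose coordinates all lie in $(0,1)$ and satisfy these two increasing chains. The decisive point is that such a configuration, being a point of $\mn{n+m}$, has pairwise distinct coordinates, hence a well-defined total order on $\{z_1,\ldots,z_{n+m}\}$; reading off the indices in increasing order of $z$-value gives a permutation of $\{1,\ldots,n+m\}$ whose restrictions to $\{1,\ldots,n\}$ and to $\{n+1,\ldots,n+m\}$ are increasing, that is, exactly an element $\sigma\in\on{sh}(\ent{1,n},\ent{n+1,n+m})$, with the configuration lying in the chamber $\Phi_{n+m}^{\sigma}=\{0<t_{\sigma(1)}<\cdots<t_{\sigma(n+m)}<1\}$. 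Conversely every such $\sigma$ arises this way, and distinct shuffles index disjoint chambers, so the preimage is the disjoint union of the $\Phi_{n+m}^{\sigma}$. The one step I would treat most carefully is this correspondence between total orders refining the two chains and shuffle permutations, making sure the convention for $\sigma$ matches the one fixed earlier; the verification is combinatorial rather than hard. I would also note that, unlike in Proposition \ref{shintprop} where the splitting held only up to codimension-one sets, here the coproduct is literal: the loci $z_i=z_j$ are already excised from $\mn{n+m}$, so no boundary pieces are discarded.
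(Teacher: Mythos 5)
Your proposal is correct and follows essentially the same route as the paper: the pullback formula is treated as a direct coordinate substitution (the paper simply calls it obvious), and the cell decomposition is obtained by unwinding the two order chains defining $\Phi_n\times\Phi_m$ and observing that a configuration with pairwise distinct coordinates must lie in exactly one shuffle chamber $\Phi_{n+m}^{\sigma}$. Your added remark that the coproduct is literal here (since the diagonals $z_i=z_j$ are excised from $\mn{n+m}$, unlike the codimension-one discrepancy in Proposition \ref{shintprop}) is a point the paper uses implicitly but does not spell out.
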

%%%%%%%%%%%%%%
\begin{proof}
The first part is obvious.

In order to show that  $\beta^{-1}( \Phi_n\times\Phi_m)=\coprod
\Phi_{n+m}^{\sigma}$ we have to remember that a cell in $\mn{n+m}(\R)$
is given by a cyclic order on
the marked points. Let $X =(0,z_1,\ldots , z_{n+m},1,\infty)$ be a
point in $\mn{n+m}(\R)$ such that $\beta(X)\in \Phi_n\times
\Phi_m$. The values of the $z_i$ have to be such that
\eq \label{conorsh}0<z_1<\ldots <z_n<1 \;(< \infty) \quad
\mx{and}\quad 0<z_{n+1}<\ldots
< z_{n+m}< 1 \;(< \infty).\eqf
However there is no order condition relating, say $z_1$ to $z_{n+1}$. 

So,
points on $\mn{n+m}(\R)$ which are in
$\beta^{-1}(\Phi_n\times \Phi_m)$ are such that the $z_i$ are
compatible with (\ref{conorsh}). That is, they are in $\ds \coprod_{\sigma \in
  \on{sh}(\ent{1, n},\ent{n + 1,
n + m})}\Phi_{n+m}^{\sigma}$.
\end{proof}
%%%%%%%%%%%%
As  $\Phi_n \times \Phi_m \sm
\left(\beta(\beta^{-1}(\Phi_n \times \Phi_m)) \right)$ is of
codimension $1$, we have the following proposition.
\begin{prop}The shuffle relation $\zeta(\mb k)\zeta(\mb l)=\sum_{\sigma\in
\on{sh}(\mb k, \mb l)}\zeta(\sigma)$ is a consequence of the following
change of variables:
\[\int_{\Phi_n\times\Phi_m}\oms k\w \oms l
=\int_{\beta^{-1}(\Phi_n\times\Phi_m)}\beta^*(\oms k\w \oms l).\]
\end{prop}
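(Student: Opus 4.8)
The plan is to take the asserted change-of-variables identity as given and read its two sides as $\zeta(\mb k)\zeta(\mb l)$ on the left and $\sum_\sigma\zeta(\sigma)$ on the right. First I would rewrite the left-hand side: since $\zeta(\mb k)=\int_{\Phi_n}\oms k$ and $\zeta(\mb l)=\int_{\Phi_m}\oms l$ are convergent iterated integrals, Fubini's theorem gives $\zeta(\mb k)\zeta(\mb l)=\left(\int_{\Phi_n}\oms k\right)\left(\int_{\Phi_m}\oms l\right)=\int_{\Phi_n\times\Phi_m}\oms k\w\oms l$, which is exactly the integrand and domain appearing on the left of the assumed identity. (That identity itself is legitimate for the reason given just before the proposition: $\beta$ restricts to an isomorphism from $\beta^{-1}(\Phi_n\times\Phi_m)$ onto the open subset $\beta(\beta^{-1}(\Phi_n\times\Phi_m))\subset\Phi_n\times\Phi_m$ whose complement, being of codimension $1$, has measure zero, so the integral over $\Phi_n\times\Phi_m$ transports to its pullback over the preimage.)

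Next I would analyse the right-hand side $\int_{\beta^{-1}(\Phi_n\times\Phi_m)}\beta^*(\oms k\w\oms l)$ by means of Proposition \ref{oublishu}, which supplies both the disjoint decomposition $\beta^{-1}(\Phi_n\times\Phi_m)=\coprod_\sigma\Phi_{n+m}^\sigma$ over the $(n,m)$-shuffles $\sigma\in\on{sh}(\ent{1,n},\ent{n+1,n+m})$ and the explicit pullback form $\beta^*(\oms k\w\oms l)=\fr{dt_1}{1-t_1}\w\cdots\w\fr{dt_n}{t_n}\w\fr{dt_{n+1}}{1-t_{n+1}}\w\cdots\w\fr{dt_{n+m}}{t_{n+m}}$. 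Splitting the integral over the cells turns the right-hand side into $\sum_\sigma\int_{\Phi_{n+m}^\sigma}\beta^*(\oms k\w\oms l)$. On the cell $\Phi_{n+m}^\sigma$ the marked points occur in the order prescribed by $\sigma$, so renumbering the variables by $\sigma$ carries $\Phi_{n+m}^\sigma$ onto the standard cell $\Phi_{n+m}$; under this relabelling I would check that the product form above becomes exactly the Kontsevich form $\om_\sigma$ of the tuple shuffled according to $\sigma$, whence $\int_{\Phi_{n+m}^\sigma}\beta^*(\oms k\w\oms l)=\int_{\Phi_{n+m}}\om_\sigma=\zeta(\sigma)$. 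Summing over all shuffles yields $\sum_{\sigma\in\on{sh}(\mb k,\mb l)}\zeta(\sigma)$, which is the claimed relation.

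The step I expect to be the crux is this final relabelling: one must verify that, after sorting the coordinates of $\Phi_{n+m}^\sigma$ into increasing order, the sequence of denominators $t_i-\ve_i$ of the product form lines up place by place with the $\ve$-pattern defining $\om_\sigma$, and that the sign incurred by reordering the wedge factors is compensated by the change of orientation of the cell, so that the two forms coincide on the nose and each integral stays positive. This is precisely the combinatorial identification already carried out in the proof of Proposition \ref{shintprop} (there phrased for the simplices $\Delta_\sigma\subset\Delta_n\times\Delta_m$), together with the tacit bijection between the $(n,m)$-shuffles indexing the cells $\Phi_{n+m}^\sigma$ and the terms $\sigma\in\on{sh}(\mb k,\mb l)$ indexing the Kontsevich forms. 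I would therefore invoke that earlier argument, transported through $\beta$ to the moduli-space cells, rather than recompute it from scratch.
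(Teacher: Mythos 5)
Your proof is correct and takes essentially the same route as the paper's: both reduce the right-hand side via Proposition \ref{oublishu} to a sum of integrals over the cells $\Phi_{n+m}^{\sigma}$, then relabel the variables cell by cell to identify each term with $\zeta(\sigma)$, explicitly invoking the computation already done in Proposition \ref{shintprop}. The additional details you spell out (Fubini on the left-hand side, the codimension-one justification of the change of variables, and positivity/convergence of each term) only make explicit what the paper states briefly or leaves implicit.
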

%%%%%
\begin{proof}Using the previous proposition, the right hand side of
  this equality is equal to
\[\sum_{\sigma\in\on{sh}(\ent{1, n},\ent{n + 1,
n + m})}\int_{\Phi_{n+m}^{\sigma}}
\fr{dt_1}{1 -t_1}\w \cdots\w \fr{dt_{n+m}}{t_{n+m}}.\]
Then we permute the variables and change their names in order to have
an integral over  $\Phi_{n+m}$ for each term. This is the same
computation we did for the integral over $\R^{n+m}$ in proposition
\ref{shintprop}.

As the form $\fr{dt_1}{1 -t_1}\w \cdots\w \fr{dt_{n+m}}{t_{n+m}}$
(resp. $\fr{dt_{\sigma(1)}}{1 -t_{\sigma(1)}}\w \cdots\w
    \fr{dt_{\sigma(n+m)}}{t_{\sigma(n+m)}}$) does not have any poles on
    the boundary of $\Phi_{n+m}^{\sigma}$ (resp. $\Phi_{n+m}$), all the
    integrals are convergent.
\end{proof}
\subsection{Stuffle and moduli spaces of curves}\label{modstu}
%There is another (in fact several) useful coordinate system on
%$\mnb{r}$. We have used the simplicial coordinates defined in
%\cite{MZMSGM} $(t_1,\ldots , t_r)$. Now, following ideas of Cartier and
%the work of Goncharov in \cite{PMMGon}[8.4 p. 70], we define cubical
%coordinates via the formula 
In Section \ref{secstuint},  in order to have an integral
representation of the stuffle product, we recalled, using the integral
over a simplex and a change of variables, a cubical representation  of
the MZVs  (integral over a cube). We use here a similar change of
variables to introduce another system of local coordinates on
$\mnb{r}$, the Deligne-Mumford compactification of the moduli space of
curves. Following \cite{Brownphd}, we will speak of cubical coordinates. Those cubical coordinates, $u_i$,  are defined on an open subset of $\mnb{r}$ 
by $u_1 = t_r$ and $u_i =
t_{r-i+1}/t_{r-i+2}$ for $i < r$ where the $t_i$ are the usual (simplicial)
coordinates on $\mnb{r}$. This cubical system is well adapted
to express the stuffle relations on the moduli spaces of curves.
%%%%%%%%%%
\begin{prop}\label{stmod}Let $\delta$ be the map defined by
\begin{align*}\mn{n+m}&\xrightarrow[\hspace{1cm}]{\delta}
  \mn{n}\times \mn{m} \\
(0,z_1,\ldots ,z_{n+m}, 1,\infty)&\longmapsto  (0, z_{m+1},\ldots ,
z_{m+n}, 1,\infty)\times(0,z_1,\ldots , z_m,
z_{m+1},\infty).
\end{align*}

Writing the expression of $\oms k$ and $\oms l$ in cubical coordinates,
one finds $\oms k =f_{\mathbf{k}}(u_1,\ldots ,u_n)\du[n]$ and $\oms l =
f_{\mathbf{l}}(u_{n+1},\ldots ,u_{n+m})\du[m]$ where the $f_{\mathbf{k}}$ are as
in section \ref{secstuint}. Then, using those coordinates we have
\[\delta^*(\oms k\w \oms l)=\f{k}{p} (u_1,\ldots
,u_n)\f{l}{q}(u_{n+1},\ldots , u_{n+m})\du[n+m]\] 
and
\[\delta^{-1}(\Phi_n\times \Phi_m) =\Phi_{n+m}.\]
\end{prop}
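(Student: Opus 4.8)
The plan is to do every computation in cubical coordinates, in which I expect the map $\delta$ to become the trivial block map that identifies the $n+m$ cubical coordinates of $\mn{n+m}$ with the $n$ cubical coordinates of the first factor followed by the $m$ cubical coordinates of the second. This is exactly the feature that makes cubical coordinates adapted to the stuffle, just as simplicial coordinates were adapted to the shuffle map $\beta$ of Proposition \ref{oublishu}. First I would fix notation: on $\mn r$ with marked point $(0,z_1,\ldots,z_r,1,\infty)$ the simplicial coordinates are $t_i=z_i$ and the cubical coordinates $u_i$ are those of \eqref{blowup}, so that $z_{r-i+1}=u_1\cdots u_i$, equivalently $z_k=u_1\cdots u_{r-k+1}$. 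On $\mn{n+m}$ I write $u_1,\ldots,u_{n+m}$ for these, so $z_k=u_1\cdots u_{n+m-k+1}$.

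The central computation is to express the cubical coordinates of the two factors of $\delta$ through $u_1,\ldots,u_{n+m}$. The first factor $(0,z_{m+1},\ldots,z_{m+n},1,\infty)$ is already in standard form, its distinguished points sitting at $0,1,\infty$; its simplicial coordinates are $z_{m+1},\ldots,z_{m+n}$, and substituting $z_k=u_1\cdots u_{n+m-k+1}$ shows at once that its cubical coordinates are $u_1,\ldots,u_n$. The subtle point, which I expect to be the main obstacle, is the second factor $(0,z_1,\ldots,z_m,z_{m+1},\infty)$: here the distinguished ``$1$''-point sits at $z_{m+1}$ rather than at $1$, so before reading off simplicial coordinates one must apply the $\PSL_2$ normalization $w\mapsto w/z_{m+1}$ fixing $0$ and $\infty$, giving simplicial coordinates $z_j/z_{m+1}$. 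The key phenomenon is that in the resulting cubical coordinates the normalizing factor $z_{m+1}$ cancels in every ratio, and one finds the cubical coordinates of the second factor to be exactly $u_{n+1},\ldots,u_{n+m}$; verifying this cancellation is the heart of the argument.

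With this identification both assertions are immediate. For the form, $\delta$ pulls back the $i$-th cubical coordinate of the first factor to $u_i$ and the $j$-th of the second to $u_{n+j}$; since $\oms k=\f k p(u_1,\ldots,u_n)\,\du[n]$ and $\oms l=\f l q\,\du[m]$ in cubical coordinates, which is the form-level content of Proposition \ref{intcuprop} via \eqref{blowup}, the pullback $\delta^*(\oms k\w\oms l)$ equals $\f k p(u_1,\ldots,u_n)\f l q(u_{n+1},\ldots,u_{n+m})\,\du[n+m]$; the identification is order-preserving, so no sign arises when reassembling the wedge. For the cells, the change of variables \eqref{blowup} sends the open simplex $\Delta_r$ onto the open cube $(0,1)^r$, so each $\Phi_r$ is the locus $0<u_i<1$ in cubical coordinates, and the coordinate identification forces $\delta^{-1}(\Phi_n\times\Phi_m)$ to be the locus where all of $u_1,\ldots,u_{n+m}$ lie in $(0,1)$, namely $\Phi_{n+m}$. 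I would cross-check this directly on real configurations: $\delta(X)\in\Phi_n\times\Phi_m$ says $0<z_{m+1}<\cdots<z_{m+n}<1$ together with $0<z_1<\cdots<z_m<z_{m+1}$ (dividing the second factor by $z_{m+1}>0$), which concatenates to $0<z_1<\cdots<z_{n+m}<1$, i.e. $X\in\Phi_{n+m}$.
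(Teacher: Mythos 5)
Your proof is correct and follows essentially the same route as the paper: the key step in both is rewriting the second factor $(0,z_1,\ldots,z_m,z_{m+1},\infty)$ in standard form via the rescaling $w\mapsto w/z_{m+1}$, observing that the normalization cancels in the cubical ratios so that $\delta$ becomes the block map $(u_1,\ldots,u_{n+m})\mapsto(u_1,\ldots,u_n)\times(u_{n+1},\ldots,u_{n+m})$, and checking the cell statement via the order conditions on the $z_i$. The only cosmetic difference is that you deduce $\delta^{-1}(\Phi_n\times\Phi_m)=\Phi_{n+m}$ from the cubical identification and treat the order-condition argument as a cross-check, whereas the paper proves that statement directly by the order conditions.
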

%%%%%%%%%%%%%
%%%%%%%%%%%%%
\begin{proof}To prove the second statement, let $X = (0, z_1,\ldots ,
  z_{n+m},1,\infty)$ be such that $\delta(X)\in \Phi_ n\times
  \Phi_m$. Then the values of the $z_i$'s have to verify
\eq 0 < z_1 <\ldots < z_m < z_{m+1}\;(< \infty)\quad \mx{and} \quad 0 <
z_{m+1} <\ldots < z_{n+m} < 1\;(< \infty).\label{conorst}\eqf
These conditions show that $0 < z_1 <\ldots < z_m < z_{m+1} <\ldots <
1 < \infty$, so $X\in\Phi_{n+m}$.

To prove the first statement, we claim that $\delta$ is expressed in
cubical coordinates by
\[(u_1,\ldots , u_{n+m})\longmapsto (u_1,\ldots , u_n) \times
(u_{n+1},\ldots , u_{n+m}).\]
It is obvious to see that for the left hand factor the coordinates are
unchanged. For the right hand factor we have to rewrite the
expression of the right side in terms of the standard representatives
 on $\mn{m}$. We have
\[(0, z_1,\ldots , z_m, z_{m+1},\infty)= (0,z_1/z_{m+1},\ldots ,
z_m/z_{m+1},1,\infty)=(0,t_1,\ldots,t_m,1,\infty)\]
in simplicial coordinates. This point is given in cubical coordinates
on $\mn{m}$ by
\[(t_m, t_{m-1}/t_{m},\ldots , t_1/t_2)= (z_m/z_{m+1},\ldots , z_1/z_2)
=(u_{n+1},\ldots,u_{n+m}).\]
\end{proof}
As a consequence of this discussion and the results of Section
\ref{secstuint}, we have the following proposition.
\begin{prop}\label{oublistu2}Using the Cartier decomposition
  (\ref{stupropeq}), the
  stuffle product can be viewed as the change of variables:
\[\int_{\Phi_n\times \Phi_m}\oms k\w \oms l
=\int_{\delta^{-1}(\Phi_n\times \Phi_ m)}\delta^*(\oms k\w \oms l).\]
\end{prop}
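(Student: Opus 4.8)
The plan is to deduce this equality directly from Proposition \ref{stmod}, where the entire geometric content has already been assembled. That proposition supplies two facts: the two integration domains correspond exactly, $\delta^{-1}(\Phi_n\times\Phi_m)=\Phi_{n+m}$; and in cubical coordinates $\delta$ is the canonical coordinate-splitting map $(u_1,\ldots,u_{n+m})\mapsto(u_1,\ldots,u_n)\times(u_{n+1},\ldots,u_{n+m})$. First I would use the latter to see that $\delta$ restricts to an orientation-preserving diffeomorphism of the open cell $\Phi_{n+m}$ onto $\Phi_n\times\Phi_m$, up to a set of codimension $1$: read in the cubical charts it is literally the identification of $[0,1]^{n+m}$ with $[0,1]^n\times[0,1]^m$, so it is a bijection of the open cubes with constant Jacobian $1$, and the sign is exactly the one already recorded in the pullback formula of Proposition \ref{stmod}.

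Granting this, the asserted identity is just the change-of-variables formula $\int_{\delta(V)}\eta=\int_V\delta^*\eta$ applied to the top-degree form $\eta=\oms k\w\oms l$, the orientation-preserving diffeomorphism $\delta$, and $V=\Phi_{n+m}$, $\delta(V)=\Phi_n\times\Phi_m$. No pullback needs to be recomputed, since Proposition \ref{stmod} already gives $\delta^*(\oms k\w\oms l)=\f{k}{p}(u_1,\ldots,u_n)\f{l}{q}(u_{n+1},\ldots,u_{n+m})\du[n+m]$; I would simply invoke the formula on the cubical chart covering $\Phi_{n+m}$, so that the right-hand side becomes $\int_{\Phi_{n+m}}\delta^*(\oms k\w\oms l)$, which is $\int_{\delta^{-1}(\Phi_n\times\Phi_m)}\delta^*(\oms k\w\oms l)$ by the domain identification.

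The one point that needs care, and which I expect to be the only real obstacle, is the interplay of convergence with the codimension-$1$ locus where the charts degenerate. I would handle it exactly as in the corollary following Proposition \ref{stuprop}: the integrands are positive on the open cube and their poles along a codimension-$k$ boundary stratum have order at most $k$, so a succession of blow-ups resolves them and renders each integral absolutely convergent; absolute convergence is precisely what licenses both the application of the change of variables and the discarding of the codimension-$1$ set on which the cubical coordinates break down. Finally, to recognize this identity as the stuffle relation I would expand both sides in cubical coordinates: the left-hand side factors, by Proposition \ref{intcuprop}, as $\zeta(\mathbf{k})\zeta(\mathbf{l})$, while substituting the Cartier decomposition (\ref{stupropeq}) into the right-hand integrand and integrating term by term, after permuting the variables in each term, yields $\sum_{\sigma\in\on{st}(\mathbf{k},\mathbf{l})}\zeta(\sigma)$, again by Proposition \ref{intcuprop}.
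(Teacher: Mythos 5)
Your proof is correct and follows exactly the route the paper intends: the paper states this proposition without a written proof, presenting it as an immediate consequence of Proposition \ref{stmod} (the domain identification $\delta^{-1}(\Phi_n\times\Phi_m)=\Phi_{n+m}$ and the cubical pullback formula) together with the results of Section \ref{secstuint} (the Cartier decomposition \eqref{stupropeq}, Proposition \ref{intcuprop}, and the convergence argument of the corollary there). Your write-up simply makes this implicit argument explicit, including the same treatment of convergence, so it matches the paper's approach.
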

 \begin{rem}\label{remstupb}We should point out here the fact
   that the Cartier decomposition "does not lie in the moduli
 spaces of curves", in the sense that forms appear in the decomposition
 which are not holomorphic on the moduli space. For example, in the
 Cartier decomposition of $f_{2,1}(u_1, u_2, u_3)f_{2,1}(u_4,
 u_5,u_6)$, we see the term
 \[\fr{u_1u_2u_4u_5du_1du_2du_3du_4du_5du_6}{
 (1-u_1u_2u_4u_5)(1-u_1u_2u_3u_4u_5u_6)}\]
 which is not a holomorphic differential form on $\m_{0,6}$. However,
 it is a well-defined convergent form on the standard cell where it is
 integrated. Changing the numbering of the variables (which stabilises
 the standard cell) gives the equality with $\zeta(4, 2)$. This example
represents the situation in the general case: when
 simply dealing with integrals, the non-holomorphic forms
 are not a problem. However, in the context of framed motives, they are.
 \end{rem}
\section{Motivic shuffle for the "convergent" words}
\subsection{Framed mixed Tate motives and motivic multiple zeta values}
This section is a short introduction to the motivic tools we will use
to prove the motivic double shuffle. The motivic context is a cohomological
version
of Voevodsky's category $\mc{DM}_{\Q}$ \cite{Vo00}. Goncharov developed  
in \cite{VHMGon}, \cite{GSFGGon} and \cite{MPMTMGon}
an additional structure on mixed Tate motives, introduced in \cite{BGSV}, in
order to select
a specific period of a mixed Tate motive.

An $n$-framed mixed Tate motive is a mixed Tate motive $M$ equipped
with two non-zero morphisms:
\[v:\Q(-n)\ra \Gr^W_{2n} M \qquad f:\Q(0)\ra \left(\Gr^W_{0} M
\right)^{\vee}=\Gr^W_{0}M^{\vee}.\]
On the set of all $n$-framed mixed Tate motives, we consider the coarsest
equivalence relation under which $(M,v,f)\sim (M',v',f')$ if there is a
linear map $M \ra M'$ respecting the frames. Let $\mc{A}_n$ be the set 
of
equivalence classes and $\mc{A}_{\bullet}$ the direct sum of the
$\mc{A}_n$. We write $[M;v;f]$ for an equivalence class. 
\begin{thm}[{\cite{GSFGGon}}]\label{FMTMhopf}$\mc{A}_{\bullet}$ has a
  natural structure of graded commutative Hopf
  algebra over $\Q$.

$\mc{A}_{\bullet}$ is canonically isomorphic to the dual of the Hopf 
algebra of
all endomorphisms of the fibre functor of the Tannakian category of 
mixed
Tate motives.
\end{thm}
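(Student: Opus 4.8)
The plan is to build both structures by hand on framed objects, verify the bialgebra axioms, upgrade to a Hopf algebra using connectedness of the grading, and finally identify the result with the dual of the endomorphism Hopf algebra via matrix coefficients. First I would define the product of an $n$-framed motive $[M;v;f]$ and an $m$-framed motive $[M';v';f']$ to be the $(n+m)$-framed tensor product $[M\ot M';\,v\ot v';\,f\ot f']$, using $\Q(-n)\ot\Q(-m)=\Q(-n-m)$ together with the canonical maps $\Gr^W_{2n}M\ot\Gr^W_{2m}M'\hra\Gr^W_{2(n+m)}(M\ot M')$ and the dual statement in weight $0$. Tensoring two frame-respecting morphisms shows this descends to $\mc{A}_\bullet$; associativity, commutativity and gradedness are then immediate, and the class $[\Q(0);\id;\id]\in\mc{A}_0$ is a unit, so $\mc{A}_\bullet$ becomes a graded commutative $\Q$-algebra.

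\textbf{Coproduct.} The delicate point is the coproduct. For $[M;v;f]$ and each intermediate weight $2p$ with $0\le p\le n$, I would fix a basis $\{e_\alpha\}$ of the framing space $\Hom(\Q(-p),\Gr^W_{2p}M)$ with dual basis $\{e^\alpha\}$, and set
\[\Delta[M;v;f]=\sum_{p=0}^{n}\sum_{\alpha}\big[M_{[p,n]};\,v;\,e^{\alpha}\big]\ot\big[M_{[0,p]};\,e_{\alpha};\,f\big],\]
where $M_{[a,b]}$ is the weight subquotient of $M$ in weights $2a$ to $2b$, Tate-twisted so that the two factors lie in $\mc{A}_{n-p}$ and $\mc{A}_{p}$. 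Since $\sum_\alpha e_\alpha\ot e^\alpha$ is the canonical copairing of the framing space, the right-hand side does not depend on the chosen basis, and this is exactly what makes $\Delta$ well defined on equivalence classes.

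\textbf{From bialgebra to Hopf, and the Tannakian identification.} Coassociativity follows by inserting two intermediate weights and matching the two resulting triple sums, and compatibility with the product follows from $\Gr^W(M\ot M')=\Gr^W M\ot\Gr^W M'$ (weights add), which splits a sum over an intermediate weight of the tensor product into a product of sums. Thus $\mc{A}_\bullet$ is a graded bialgebra; since it is connected ($\mc{A}_0\cong\Q$, because a $0$-framed motive is classified by the single scalar $f(v)$) and positively graded, the antipode is forced recursively, so $\mc{A}_\bullet$ is a graded commutative Hopf algebra. For the second assertion, let $\omega$ be the weight-graded fibre functor on mixed Tate motives and $E=\on{End}^{\ot}(\omega)$ the Hopf algebra of its tensor endomorphisms. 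I would send $[M;v;f]$ to the matrix coefficient $c_{M,f,v}\in E^{\vee}$ given by $\ell\mapsto f(\ell_M(v))$, with $\ell_M$ the component of $\ell$ at $M$; compatibility with products is the tensor property of $\ell$, and compatibility with coproducts reproduces the intermediate-frame formula above. Tannakian reconstruction then gives bijectivity: matrix coefficients exhaust $E^{\vee}$, and the relations among them are precisely those generated by frame-respecting morphisms, yielding the canonical isomorphism $\mc{A}_\bullet\cong E^{\vee}$.

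\textbf{Main obstacle.} The genuine technical content is the well-definedness of $\Delta$ together with its coassociativity: checking that the intermediate-frame sum is independent of the chosen bases, descends to equivalence classes, and is compatible with iteration. Everything else — the product, the bialgebra compatibility, and the passage to a Hopf algebra — is formal once connectedness of the grading is in hand.
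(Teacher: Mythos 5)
This theorem is not proved in the paper at all: it is imported verbatim from \cite{GSFGGon}, so there is no internal proof to compare against, and your proposal should be judged against Goncharov's original construction. Judged that way, it is essentially that construction and is correct in outline: product by tensoring frames through $\Q(-n)\ot\Q(-m)\simeq\Q(-n-m)$, coproduct by summing over dual bases of the intermediate framing spaces $\Hom(\Q(-p),\Gr^W_{2p}M)$, antipode forced by connectedness of the grading ($\mc{A}_0\simeq\Q$ via the pairing $f(v)$), and the identification with the dual of the endomorphisms of the fibre functor via the matrix coefficients $\ell\mapsto f(\ell_M(v))$. One caution on the step you yourself single out as the crux: basis-independence of the coproduct (your copairing remark) does not by itself give invariance under a frame-respecting morphism $\vp:M\ra M'$, because $\Gr^W_{2p}\vp$ at an intermediate weight is in general neither injective nor surjective, so the two sums are not matched term by term. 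What completes the argument is the linearity of the class $[M;v;f]$ in each frame separately (this is exactly the paper's Lemma \ref{som}): using it, the change-of-basis computation with the matrix of $\Gr^W_{2p}\vp$ in the chosen bases transports $\Delta[M;v;f]$ into $\Delta[M';v';f']$, and the same bilinearity is what you need to make the coproduct compatible with the equivalence relation throughout. With that supplement, and with the Tannakian surjectivity/injectivity of the matrix-coefficient map spelled out as in \cite{GSFGGon}, your sketch is a faithful reconstruction of the cited proof.
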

%As we use a cohomological version of motives, 
In our context, the morphism $v$ of a
frame should be linked with some differential form 
%with logarithmic poles along a divisor, 
and the morphism $f$ is a homological counterpart of $v$, that is a
real simplex.

We give here two technical lemmas that will be used in the
next
sections. We write $[M,v,f]$ for the equivalence class of $(M,v,f)$ in
$\mc{A}_{\bullet}$. By a slight abuse of notation, we will speak of
framed mixed Tate motives refering to both $(M,v,f)$ and $[M,v,f]$.

We recall that the addition of two framed mixed Tate motives $[M, v,f]$ 
and
$[M',v',f']$ is given by
\[
 [M, v,f]\oplus[M',v',f']:\!= [M\oplus M',(v,v'), f+f'].
\]

\begin{lem}\label{som}Let $M$ be a mixed Tate motive. $v,\,v_1,\,v_2
 :\Q(-n)\ra\on{Gr}^W_{2n}M$ and $f,\,f_1,\, f_2: \Q(0) \ra
  \on{Gr}^W_{0} M^{\vee}$. We have:
\[[M; v; f_1 + f_2] = [M; v; f_1] + [M; v; f_2] \]
and 
\[
[M; v_1+v_2; f] = [M; v_1; f] + [M; v_2; f]. 
\]
\end{lem}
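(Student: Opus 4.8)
The plan is to derive both identities formally from the definition of the sum of framed motives, recalled just above the statement, together with the defining equivalence relation; in each case the whole content is a single morphism of motives — the diagonal for the first identity and the codiagonal for the second.

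First I would unwind the definition of addition to write
\[[M; v; f_1] + [M; v; f_2] = [M \oplus M;\, (v,v);\, f_1 + f_2],\]
where on $M \oplus M$ one uses the canonical splittings $\Gr^W_{2n}(M \oplus M) = \Gr^W_{2n}M \oplus \Gr^W_{2n}M$ and $\Gr^W_0(M \oplus M)^{\vee} = \Gr^W_0 M^{\vee} \oplus \Gr^W_0 M^{\vee}$, so that $(v,v)$ and $f_1 + f_2$ are the obvious maps into and out of these direct sums. To identify the right-hand side with $[M; v; f_1 + f_2]$, I would consider the diagonal morphism $\Delta \colon M \to M \oplus M$, $x \mapsto (x,x)$, which is a morphism of motives. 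On $\Gr^W_{2n}$ it sends $v$ to $(v,v)$, matching the top frame of $M \oplus M$; on the bottom weights, its transpose sends the covector $f_1 + f_2$ on $M \oplus M$ back to $f_1 + f_2$ on $M$. Hence $\Delta$ respects the frames between $(M, v, f_1 + f_2)$ and $(M \oplus M, (v,v), f_1 + f_2)$, so the defining equivalence relation gives $[M; v; f_1 + f_2] = [M \oplus M; (v,v); f_1 + f_2]$, which is the first identity.

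For the second identity I would argue dually, using the codiagonal morphism $\nabla \colon M \oplus M \to M$, $(x,y) \mapsto x + y$. Again by the definition of addition, $[M; v_1; f] + [M; v_2; f] = [M \oplus M; (v_1, v_2); f + f]$. On $\Gr^W_{2n}$, $\nabla$ sends the top frame $(v_1, v_2)$ of $M \oplus M$ to $v_1 + v_2$, while on the bottom weights its transpose sends the covector $f$ on $M$ to the diagonal $(f,f) = f + f$ on $M \oplus M$. Thus $\nabla$ respects the frames between $(M \oplus M, (v_1, v_2), f + f)$ and $(M, v_1 + v_2, f)$, and the equivalence relation yields $[M; v_1 + v_2; f] = [M; v_1; f] + [M; v_2; f]$.

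The only point requiring care — and the one I would expect to be the main, if minor, obstacle — is keeping straight the two functoriality conventions hidden in the phrase \emph{respecting the frames}: the weight-$2n$ frame $v$ transforms covariantly under $\Gr^W_{2n}(\phi)$, whereas $f$, being a covector on $\Gr^W_0$, transforms contravariantly through the transpose of $\Gr^W_0(\phi)$. This is exactly why the diagonal is adapted to splitting the bottom frame and the codiagonal to splitting the top frame. Once this is observed, both verifications reduce to the elementary identities $\Gr^W(\Delta)(a) = (a,a)$ and $\Gr^W(\nabla)(a,b) = a+b$ together with their transposes, which are immediate.
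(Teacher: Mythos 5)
Your proof is correct and follows exactly the paper's own argument: the paper likewise establishes the first identity via the diagonal map $M\ra M\oplus M$ and the second via the sum map $M\oplus M\ra M$, $(m_1,m_2)\mapsto m_1+m_2$, checking compatibility with the frames. Your write-up simply makes explicit the covariant/contravariant frame verifications that the paper leaves as ``straightforward to check.''
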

\begin{proof}It follows directly from the definition in \cite{GSFGGon}. 
For the first case, it is straightforward to check that the diagonal map
  $\vp: M\ra M\oplus M$ is compatible with
  the frames.
For the second equality, the map from $M\oplus M$ to $M$ which sends
$(m_1,m_2)$ to $m_1+m_2$ gives the map between the underlying vector spaces and
respects the frames. 
\end{proof}
\begin{lem}\label{idio}Let $M$ and $M'$ be two mixed Tate motives. Let
  $M$ be framed by $v: \Q(-n)\ra \on{Gr}^W_{2n}$ and $f:\Q(0)\ra
  \on{Gr}^W_{0}M^{\vee}$. Suppose there exists $v':\Q(-n)\ra
  \on{Gr}^W_{2n}M'$ and 
  $\vp:M'\ra M$ compatible with $v$ and $v'$. Then $f$ induces a map
  $f':\Q(0)\ra 
  \on{Gr}^W_{0}M'^{\vee}$ and if $f'$ is non zero, then $\vp$ gives an equality of framed
  mixed 
  Tate motives $[M;v;f]=[M;v';f']$.
\end{lem}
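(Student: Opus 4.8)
The plan is to unwind the definitions in \cite{GSFGGon} and check that the data we are given — a morphism $\vp: M' \ra M$ compatible with the top frames $v$ and $v'$ — forces the two framed motives to represent the same class in $\mc{A}_\bullet$. First I would make precise how $f$ induces $f'$. Since $\vp: M' \ra M$ is a morphism of mixed Tate motives, it respects the weight filtration, hence induces $\Gr^W_0 \vp: \Gr^W_0 M' \ra \Gr^W_0 M$, and dualizing gives $(\Gr^W_0 \vp)^\vee : \Gr^W_0 M^\vee \ra \Gr^W_0 M'^\vee$. I would then \emph{define} $f' := (\Gr^W_0 \vp)^\vee \circ f : \Q(0) \ra \Gr^W_0 M'^\vee$. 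The statement already stipulates that we work in the case $f' \neq 0$, so I need not address degeneracy.

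The core of the argument is that $\vp$ is, by construction, a morphism respecting the frames in the precise sense required by the equivalence relation defining $\mc{A}_\bullet$. I would verify the two compatibility conditions directly. The top-frame compatibility, $\Gr^W_{2n}\vp \circ v' = v$, is given by hypothesis. The bottom-frame compatibility is exactly the relation $f' = (\Gr^W_0 \vp)^\vee \circ f$, which is how I defined $f'$; unwinding the dual, this says that for every functional in the relevant space the pairing is preserved under $\vp$, i.e. $\vp$ carries the frame $(v',f')$ to the frame $(v,f)$. Since the equivalence relation on framed motives in \cite{GSFGGon} is generated precisely by the existence of such a frame-respecting linear map, this gives $[M';v';f'] = [M;v;f]$, which is the claimed equality.

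The main obstacle I anticipate is purely bookkeeping at the level of the dual frames: one must be careful about the direction of the induced map on $\Gr^W_0$ and its dual, since $f$ and $f'$ land in $\Gr^W_0 M^\vee$ and $\Gr^W_0 M'^\vee$ respectively, and it is easy to get the variance backwards. Concretely I would check that the diagram
\[
\begin{array}{ccc}
\Q(0) & \xrightarrow{\ f\ } & \Gr^W_0 M^\vee \\
\| & & \downarrow{(\Gr^W_0 \vp)^\vee} \\
\Q(0) & \xrightarrow{\ f'\ } & \Gr^W_0 M'^\vee
\end{array}
\]
commutes, which holds by the very definition of $f'$. Apart from this variance check, the proof is a direct appeal to the definition of the equivalence relation, exactly as in the first equality of Lemma \ref{som}; the nontriviality of the hypothesis $f' \neq 0$ is what guarantees that $\vp$ does not collapse the bottom frame and so genuinely exhibits an isomorphism of framed classes.
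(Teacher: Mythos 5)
Your proof is correct, and it is essentially the argument the paper intends: the paper states Lemma \ref{idio} without any proof at all, treating it (like the neighbouring Lemma \ref{som}, whose proof is "it follows directly from the definition") as an immediate consequence of the definition of the equivalence relation on framed motives. Your writeup — defining $f'$ as the composite $(\Gr^W_0 \vp)^\vee \circ f$, checking that $\vp$ then respects both frames, and invoking the fact that the equivalence relation is generated by frame-respecting linear maps, with $f'\neq 0$ needed only so that $(M',v',f')$ is a genuine framed motive — supplies exactly the omitted details.
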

%We now descibe a way to build framed mixed Tate motives.

We recall a classical result, used in \cite{MZMSGM} and
described more explicitly in
%\cite{} and in 
\cite{PMMGon}, that allows us to build mixed Tate motives from natural
geometric situations.
 In \cite{PMMGon}, A.B. Goncharov defined a \emph{Tate variety} as a
 smooth projective variety $\mc M$ such that the
 motive of $\mc M$ is a direct sum of copies of the Tate motive $\Q(m)$ (for
 certain $m$). We say
 that a divisor $D$ on $\mc M$ provides a Tate stratification on $\mc
 M$ if all strata of $D$, including $D_{\emptyset}=\mc M$, are Tate varieties. 

Let $\mc M$ be a smooth  variety and $X$ and $Y$ be two normal
crossing divisors on $\mc M$. Let $Y^X$ denote $Y\sm (Y\cap X$), which is a
normal crossing divisor on $\mc M\sm X$.
 
\begin{lem}\label{lemexismtm}Let $\mc M$ be a smooth variety of dimension $n$ over $\Q$ and
$X\cup Y$ be a 
  normal crossing divisor on $\mc M$ providing a Tate stratification
  of $\mc M$. If $X$ and $Y$ share no common irreducible
  components then 
  there exists a mixed Tate motive:
\[\HH^n(\mc M\sm X;Y^X)\]
such that its different realisations are given by the respective
relative cohomology groups.
\end{lem}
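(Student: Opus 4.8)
The plan is to construct the mixed Tate motive $\HH^n(\mc M \sm X; Y^X)$ as the cohomology of an explicit complex built from the Tate stratification, and to verify its mixed Tate nature by an induction on strata that exhibits all its graded pieces as Tate twists $\Q(m)$. The starting point is the observation that relative cohomology $\HH^n(\mc M \sm X; Y^X)$ fits into a long exact sequence relating it to the (absolute) cohomologies of $\mc M \sm X$ and of the divisor $Y^X = Y \sm (Y \cap X)$ inside $\mc M \sm X$. Since $Y^X$ is itself a normal crossing divisor with smooth components whose intersections are again strata of $X \cup Y$, I would resolve the relative pair by a simplicial (Mayer--Vietoris / Čech) complex whose terms are the cohomologies of the closed strata of $Y$ restricted to $\mc M \sm X$.

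\emph{First} I would reduce to showing that each building block of this complex is a direct sum of Tate motives. By hypothesis every stratum of $X \cup Y$, including $\mc M$ itself, is a Tate variety, so its motive is a direct sum of copies of $\Q(m)$; this is exactly what the definition of a Tate stratification supplies. The open variety $\mc M \sm X$ is then handled by the Gysin/localisation triangle in $\mc{DM}_{\Q}$ associated to the inclusion of the normal crossing divisor $X$: because $X$ and all its multiple intersections are strata, hence Tate varieties, the localisation triangle expresses $M(\mc M \sm X)$ as an iterated cone of Tate motives, and cones of Tate motives in $\mc{DM}_{\Q}$ are again mixed Tate. \emph{Next} I would assemble the relative object: the condition that $X$ and $Y$ share no common irreducible component guarantees that $Y^X = Y \sm (Y \cap X)$ is a genuine normal crossing divisor on the smooth variety $\mc M \sm X$, so the relative cohomology is computed by the total complex of the bicomplex combining the localisation resolution of $\mc M \sm X$ with the Čech resolution of $Y^X$ by its strata.

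\emph{Finally}, since the category of mixed Tate motives is closed under the operations used (cones, shifts, direct sums, and the passage to cohomology objects with respect to the motivic $t$-structure), the resulting object $\HH^n(\mc M \sm X; Y^X)$ is mixed Tate. The statement that its realisations are the corresponding relative cohomology groups follows because each realisation functor is exact and sends the motivic localisation and Čech triangles to the analogous triangles computing singular (resp. de Rham, resp. $\ell$-adic) relative cohomology; tracking these through the construction identifies the realisation of the motive with $\HH^n_{\mathrm{sing}}(\mc M \sm X; Y^X)$ and its companions.

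The main obstacle, and the place where the two hypotheses are really used, is controlling the \emph{weights}: I must ensure that the iterated cones do not produce non-Tate graded pieces and that the spectral sequence (or iterated triangle) degenerates appropriately so that $\HH^n$ is pinned down as a clean mixed Tate motive rather than an uncontrolled extension. The normal crossing hypothesis keeps every intersection stratum smooth and Tate, while the "no common component" hypothesis is what makes $Y^X$ a well-defined normal crossing divisor on $\mc M \sm X$ and prevents the frames from degenerating; verifying that these two conditions together force every term in the resolution into the Tate subcategory — and hence that the construction stays inside mixed Tate motives at each step — is the technical heart of the argument.
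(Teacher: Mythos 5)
The paper does not actually prove this lemma: it is recalled as a classical result, attributed to \cite{MZMSGM} and \cite{PMMGon}, and is later invoked through Goncharov's Theorem 3.6. Your sketch therefore cannot be measured against an internal argument; what it does is reconstruct, correctly in outline, the standard proof from those references. Your route --- remove $X$ by iterated Gysin/localisation triangles, resolve the relative pair along the normal crossing divisor $Y^X$ by the \v{C}ech/simplicial complex of its closed strata, observe that every term lies in the triangulated subcategory of $\mc{DM}_{\Q}$ generated by Tate objects because every stratum of $X\cup Y$ (and every intersection of strata) is a Tate variety, then take cohomology with respect to the $t$-structure --- is precisely the construction used there, and the claim about realisations does follow from the compatibility of the realisation functors with these triangles.

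Two points would need tightening before this is a complete proof. First, the existence of the $t$-structure on the triangulated category of mixed Tate motives is not formal: it requires the Beilinson--Soul\'e vanishing, which holds over $\Q$ (and over number fields) by Borel's computation of rational $K$-theory. This is exactly where the hypothesis that the base field is $\Q$ enters the lemma, and your phrase about ``the passage to cohomology objects with respect to the motivic $t$-structure'' assumes it silently. Second, your closing paragraph locates the difficulty in the wrong place: once the whole complex lies in the triangulated Tate subcategory, its $t$-cohomology objects are by definition iterated extensions of Tate twists, so there is no residual ``weight control'' or spectral-sequence degeneration to verify; and the hypothesis that $X$ and $Y$ share no irreducible component has nothing to do with frames (frames appear only in Corollary \ref{motmzv}) --- its role is to guarantee that $Y^X$ is a genuine normal crossing divisor on $\mc M\sm X$ whose strata, and their intersections with strata of $X$, are again strata of the given Tate stratification, so that every term of your bicomplex is covered by the hypothesis.
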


We have the following version given in \cite{MZMSGM}.
\begin{coro}\label{motmzv} Let $X$ and $Y$ be two normal crossing
  divisors on 
  $\partial \mnb{n}$ and suppose they do not share any irreducible
  components.  
Then, any choice of non-zero elements
\[[\om_X]\in\Gr^W_{2n}(\HH^n(\mnb{n} \sm X));\qquad [\Phi_Y] \in
\Gr^W_{0}(\HH^n(\mnb{n}; Y))^{\vee}\]
defines a \emph{framed} mixed Tate motive given by
\[
\left[\HH^n(\mnb{n}\sm X;Y^X);[\om_X];[\Phi_Y]\right].
\]
\end{coro}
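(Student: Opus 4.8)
The plan is to deduce this from Lemma \ref{lemexismtm} applied to $\mc M = \mnb{n}$, and then to exhibit the two framing morphisms by locating them in the extreme graded pieces of the weight filtration. First I would check that the hypotheses of Lemma \ref{lemexismtm} hold. The space $\mnb{n}$ is a smooth projective variety of dimension $n$ over $\Q$, and its boundary $\partial\mnb{n}$ is a normal crossing divisor whose strata are products of smaller moduli spaces of genus $0$ curves; in particular $\partial\mnb{n}$ provides a Tate stratification of $\mnb{n}$ (this is the classical fact used in \cite{MZMSGM}). Since $X$ and $Y$ are normal crossing divisors supported on $\partial\mnb{n}$ sharing no irreducible component, $X\cup Y$ still provides a Tate stratification and $Y^X = Y\sm(Y\cap X)$ is normal crossing on $\mnb{n}\sm X$. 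Lemma \ref{lemexismtm} then produces the mixed Tate motive $M = \HH^n(\mnb{n}\sm X; Y^X)$, whose realisations are the stated relative cohomology groups.

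It then remains to build an $n$-frame on $M$ out of $[\om_X]$ and $[\Phi_Y]$. For the morphism $v:\Q(-n)\ra\Gr^W_{2n}M$, I would use the long exact sequence of the pair $(\mnb{n}\sm X, Y^X)$, which in each realisation reads $\cdots\ra\HH^{n-1}(Y^X)\ra\HH^n(\mnb{n}\sm X;Y^X)\ra\HH^n(\mnb{n}\sm X)\ra\HH^n(Y^X)\ra\cdots$. As $Y^X$ has dimension $n-1$, the groups $\HH^{n-1}(Y^X)$ and $\HH^n(Y^X)$ have weights at most $2(n-1)<2n$, so taking $\Gr^W_{2n}$ gives an isomorphism $\Gr^W_{2n}M\cong\Gr^W_{2n}\HH^n(\mnb{n}\sm X)$. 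The chosen non-zero element $[\om_X]$ therefore defines $v$.

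For the morphism $f:\Q(0)\ra\Gr^W_0M^{\vee}=(\Gr^W_0M)^{\vee}$, I would use instead the localisation sequence for the divisor $X$ inside $\mnb{n}$, taken relative to $Y$. Its terms coming from the strata of $X$ carry a Tate twist and hence have weight $\geqs 2$, so passing to $\Gr^W_0$ yields an isomorphism $\Gr^W_0 M\cong\Gr^W_0\HH^n(\mnb{n};Y)$. Dualising, the chosen non-zero $[\Phi_Y]\in\Gr^W_0(\HH^n(\mnb{n};Y))^{\vee}$ defines $f$. Combining $v$ and $f$ yields the $n$-framed mixed Tate motive $[\HH^n(\mnb{n}\sm X;Y^X);[\om_X];[\Phi_Y]]$.

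The main obstacle is to make the two weight identifications genuinely motivic rather than merely true after realisation: one must know that the long exact sequence of the pair and the localisation sequence for $X$ already exist at the level of the mixed Tate motives supplied by Lemma \ref{lemexismtm}, and that the weight filtration $W$ is strictly respected, so that the vanishing of $\Gr^W_{2n}$ (resp.\ $\Gr^W_0$) on the auxiliary terms genuinely forces the isomorphisms. This strictness, together with the Tate stratification hypothesis guaranteeing that all the auxiliary groups are themselves mixed Tate, is precisely what makes both framings well defined.
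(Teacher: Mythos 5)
Your proof is correct and is essentially the argument this paper relies on: the paper gives no independent proof of this corollary (it defers to \cite{MZMSGM} right after Lemma \ref{lemexismtm}), and the derivation there is exactly yours — apply Lemma \ref{lemexismtm} to $\mc M=\mnb{n}$ using the Tate stratification by boundary strata (products of smaller $\mnb{j}$'s), then obtain the frames from the identifications $\Gr^W_{2n}\HH^n(\mnb{n}\sm X;Y^X)\cong\Gr^W_{2n}\HH^n(\mnb{n}\sm X)$ and $\Gr^W_0\HH^n(\mnb{n}\sm X;Y^X)\cong\Gr^W_0\HH^n(\mnb{n};Y)$, which follow from the relative-cohomology and localisation sequences together with the weight bounds you state. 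Your closing caveat (that these sequences and the strictness of $W$ must hold at the level of mixed Tate motives, not just realisations) is precisely what the Tate-stratification hypothesis and Goncharov's framework cited in the paper are there to guarantee.
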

The following lemma shows that we have some
flexibility in choosing $X$ and $Y$ for the framed mixed Tate motive
$\left[\HH^n(\mc M\sm
X;Y^X);[\om_X];[\Phi_Y]\right]$.
\begin{lem}\label{inclmot}With the notation of Lemma \ref{lemexismtm}, 
let  $X'$ be a normal
  crossing divisor containing $X$ which still does not share any irreducible
  component with $Y$, $X'\cup Y$ being a normal crossing divisor. Then:
 \[\left[\HH^n(\mc M\sm X;Y^X); [\om_X];[\Phi_Y]\right]
 = \left[ \HH^n(\mc M\sm X';Y^{X'}); [\om_X];[\Phi_Y]
 \right].\] 
Suppose now that $Y'$ is a normal crossing divisor containing $Y$
which does not share any irreducible component with $X'$, $X'\cup Y'$ being a normal crossing divisor. Then:
\[ \left[ \HH^n(\mc M\sm X';Y^{X'}); [\om_X];[\Phi_Y]
 \right]=\left[\HH^n(\mc M\sm X';Y'^{X'}); [\om_X];[\Phi_Y]\right]. 
\]
\end{lem}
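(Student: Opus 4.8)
The plan is to deduce both equalities from Lemma \ref{idio}, applied to the natural restriction morphisms between the mixed Tate motives involved. The two statements are formally dual: the first enlarges the ``form'' divisor $X$ while keeping the boundary $Y$ fixed, and the second enlarges the ``chain'' divisor $Y$ while keeping the open variety $\mc M\sm X'$ fixed. In each case I would exhibit a morphism $\vp$ compatible with the $v$-frame $[\om_X]$, identify the induced $f$-frame with $[\Phi_Y]$, and conclude by Lemma \ref{idio}.

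For the first equality, I would start from the open immersion $j:\mc M\sm X'\hra \mc M\sm X$, legitimate since $X\subset X'$. Restriction of relative cohomology along the induced map of pairs $(\mc M\sm X',Y^{X'})\hra(\mc M\sm X,Y^X)$ gives a morphism of motives
\[r:\HH^n(\mc M\sm X;Y^X)\ra \HH^n(\mc M\sm X';Y^{X'}),\]
both sides being mixed Tate by Lemma \ref{lemexismtm} once one checks that the hypotheses on $X'$ (normal crossing, no shared component with $Y$, and a Tate stratification inherited from the standing assumptions) are met. I would then apply Lemma \ref{idio} with $\vp=r$. On $\Gr^W_{2n}$ the frame $[\om_X]$ is represented by the same logarithmic form on both complements, since $\om_X$ has poles only along $X\subset X'$; hence $\Gr^W_{2n}(r)$ sends $[\om_X]$ to $[\om_X]$ and $r$ is compatible with the two $v$-frames. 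Dually, $r^{\vee}$ is the pushforward on relative homology induced by $j$, carrying the class of the standard cell $[\Phi_Y]$ to the same class, now read in $(\mc M\sm X,Y^X)$. Provided this induced frame is non-zero, Lemma \ref{idio} yields exactly the asserted equality of framed motives.

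For the second equality I would keep the open variety $\mc M\sm X'$ fixed and use the inclusion $Y^{X'}\subset Y'^{X'}$ coming from $Y\subset Y'$. The inclusion of pairs $(\mc M\sm X',Y^{X'})\hra(\mc M\sm X',Y'^{X'})$ induces
\[s:\HH^n(\mc M\sm X';Y'^{X'})\ra \HH^n(\mc M\sm X';Y^{X'}),\]
and I would again invoke Lemma \ref{idio} with $\vp=s$. Here the underlying open variety is unchanged, so on $\Gr^W_{2n}$ the form $\om_X$ is literally unaffected and $s$ sends $[\om_X]$ to $[\om_X]$, giving compatibility with the $v$-frames. The dual $s^{\vee}$ is the map on relative homology induced by enlarging the boundary from $Y$ to $Y'$; since the cell $\Phi$ has its boundary contained in $Y\subset Y'$ and disjoint from $X'$, its class $[\Phi_Y]$ maps to the same cell class in $\HH_n(\mc M\sm X';Y'^{X'})$, and Lemma \ref{idio} delivers the result.

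The step I expect to be the main obstacle is the non-vanishing hypothesis of Lemma \ref{idio} for the induced homological frame: one must check that the standard cell $\Phi$ still defines a \emph{non-zero} relative class after the divisor is enlarged, and that this class is precisely $[\Phi_Y]$. In contrast with the cohomological $v$-frame, whose compatibility is immediate because $\om_X$ is literally the same form, the fate of $[\Phi_Y]$ must be traced through the pushforward on relative homology. The geometric input is that the closure of the positive cell meets only the prescribed boundary components (lying in $Y$, resp.\ $Y'$) and avoids $X$ (resp.\ $X'$), so the relative cycle persists and stays non-trivial; once this is in hand, both equalities follow formally from Lemma \ref{idio}.
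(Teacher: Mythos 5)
Your proof is correct, and it follows what is clearly the intended route: the paper itself states Lemma \ref{inclmot} without proof, immediately after setting up Lemma \ref{idio}, and your argument — applying Lemma \ref{idio} to the contravariant maps $\HH^n(\mc M\sm X;Y^X)\ra \HH^n(\mc M\sm X';Y^{X'})$ and $\HH^n(\mc M\sm X';Y'^{X'})\ra \HH^n(\mc M\sm X';Y^{X'})$ induced by the inclusions of pairs, with compatibility of the $[\om_X]$-frames being immediate and the induced homological frame identified with the cell class $[\Phi_Y]$ — is precisely the argument the paper leaves implicit. You also correctly isolate the one genuine point of substance (that the cell class survives, non-zero, when $X$ is enlarged to $X'$ or $Y$ to $Y'$), which is guaranteed in the paper's applications by the geometry of the standard cell, e.g.\ Proposition \ref{boundivXn}.
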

We are now in a position to introduce Goncharov and Manin's definition
of motivic multiple zeta values.
\begin{defn}
In particular, let $\mathbf{k}$ be a $p$-tuple with $k_1\geqs 2$ and let $\As k$
be the
divisor of singularities of $\oms k$. Let $B_n$ be the Zariski closure
of the boundary of $\Phi_n$. The motivic multiple zeta value is defined in
\cite{MZMSGM}
by:
\[\left[\HH^n(\mnb{n}\sm \As k;\abs{n}{k});[\oms k];[\Phi_n]\right].\]  
\end{defn}

\subsection{Motivic Shuffle} \label{sec:motshu}
%We are here using the motivic zeta value defined by Goncharov and
%Manin in \cite{MZMSGM}. 
The map $\beta$ defined in Proposition \ref{oublishu}
will be the key to check that the motivic multiple zeta values satisfy the
shuffle
relations. This map extends continuously to the Deligne-Mumford
compactification of the moduli spaces of curves:
\[\begin{array}{ccc}
\mnb{n+m}&\xrightarrow[\hspace{1cm}]{\beta}&\mnb{n}\times
\mnb{m}.
\end{array}\]
 Let $\om_{\mathbf{k}}$ 
and $\om_{\mathbf{l}}$ be as in section \ref{section:shmod}, and write $\As k$
and $\As l$ for
their respective divisors of singularities. Let $B_n$
and $B_m$ denote the Zariski closures of the boundary of $\Phi_n$ and
$\Phi_m$ respectively. For $\sigma \in
\on{sh}(\ent{1,n},\ent{n+1,n+m})$, let $\om_{\sigma}$ denote the
differential form which corresponds
to the shuffled MZV and let $A_{\sigma}$ denote its divisor of
singularities. Let $B_{n+m}$ denote the Zariski
closure of the boundary of 
$\Phi_{n+m}$ and $B_{\sigma}$ that of $\Phi_{n+m}^{\sigma}$.
%%%%%%
The shuffle relations between motivic multiple zeta values are given in
the following proposition.
\begin{prop}\label{motivicshu}We have an equality of framed motives:
\begin{multline*}
\left[\HH^n\left(\mnb{n}\sm \As k;\abs{n}{k}\right); [\oms k];
  [\Phi_n]\right]\cdot
    \left[\HH^m \pa{\mnb{m}\sm \As l; \abs{m}{l}}; [\oms l];
      [\Phi_m]\right]=\qquad\\[3mm]
 \sum_{\sigma \in
    \on{sh}(\ent{1,n},\ent{n+1,n+m})}
  \left[\HH^{n+m}\pa{\mnb{n+m}\sm A_{\sigma};B_{n+m}^{A_{\sigma}}};
    [\om_{\sigma}];[\Phi_{n+m}]\right]. 
\end{multline*}
\end{prop}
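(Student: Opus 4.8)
The plan is to mirror, at the level of framed motives, the elementary change-of-variables argument already carried out for the integrals in Proposition \ref{shintprop} and Proposition \ref{oublishu}, replacing the naive identity of integrals by the two structural lemmas: additivity of frames (Lemma \ref{som}) and the principle that a map compatible with frames gives an equality of framed motives (Lemma \ref{idio}). First I would rewrite the left-hand side as a single framed motive on the product $\mnb{n}\times\mnb{m}$. Recall that the product in the Hopf algebra $\mc{A}_\bullet$ of Theorem \ref{FMTMhopf} is given by the tensor product of framed motives; for motives coming from cohomology this is the external (K\"unneth) product. Hence the product of the two motivic multiple zeta values is the framed motive whose underlying motive is $\HH^{n+m}$ of $\mnb n\times\mnb m$ minus $\As k\times\mnb m\cup\mnb n\times\As l$, taken relative to the corresponding boundary, framed by the differential class $[\oms k\w\oms l]$ and by the product cell $[\Phi_n\times\Phi_m]$.

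The heart of the argument is then to transport this framed motive along $\beta$. The morphism $\beta\colon\mnb{n+m}\to\mnb n\times\mnb m$ induces a pullback $\beta^*$ on the cohomological motives, and Proposition \ref{oublishu} supplies exactly the two facts I need: first, $\beta^*[\oms k\w\oms l]=[\beta^*(\oms k\w\oms l)]$ is the class of the shuffle form, so $\beta^*$ is compatible with the $v$-frames; second, $\beta^{-1}(\Phi_n\times\Phi_m)=\coprod_\sigma\Phi_{n+m}^\sigma$, so pushing the homological frame forward along $\beta$ recovers $[\Phi_n\times\Phi_m]$ up to the codimension-$1$ discrepancy recorded after Proposition \ref{oublishu}, which does not affect the homology class. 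I would therefore apply Lemma \ref{idio} with $\varphi=\beta^*$: taking $M$ to be the motive on $\mnb{n+m}$ framed by $[\beta^*(\oms k\w\oms l)]$ and by $[\coprod_\sigma\Phi_{n+m}^\sigma]$ and $M'$ the product motive of the first step, the compatibility of the differential frames together with the non-vanishing of the induced homological frame $[\Phi_n\times\Phi_m]$ yield the equality of $M$ with $M'$, that is, with the left-hand side.

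It then remains to split this motive into the sum indexed by $\sigma$. Using additivity of the homological frame (Lemma \ref{som}), the frame $[\coprod_\sigma\Phi_{n+m}^\sigma]=\sum_\sigma[\Phi_{n+m}^\sigma]$ distributes over the sum, expressing $M$ as $\sum_\sigma[M;[\beta^*(\oms k\w\oms l)];[\Phi_{n+m}^\sigma]]$. For each fixed $\sigma$, relabelling the variables so that the marked points occur in the standard order (the same renaming used in the proof of Proposition \ref{shintprop}) carries $\Phi_{n+m}^\sigma$ to $\Phi_{n+m}$ and $\beta^*(\oms k\w\oms l)$ to $\om_\sigma$, identifying the $\sigma$-term with the summand $[\HH^{n+m}(\mnb{n+m}\sm A_\sigma;B_{n+m}^{A_\sigma});[\om_\sigma];[\Phi_{n+m}]]$ on the right-hand side. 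This is where Lemma \ref{inclmot} does the essential bookkeeping: the polar and boundary divisors produced by $\beta^{-1}$ of $\As k\times\mnb m\cup\mnb n\times\As l$ and of the boundary need not coincide with $A_\sigma$ and $B_{n+m}^{A_\sigma}$, but they differ only by components disjoint from the frames, so enlarging and shrinking the polar and boundary divisors leaves the framed motive unchanged.

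The step I expect to be the main obstacle is precisely this divisor bookkeeping. One must check that the normal-crossing and \emph{no shared irreducible component} hypotheses of Lemma \ref{lemexismtm} and Lemma \ref{inclmot} hold for the $\beta$-preimages of the polar and boundary divisors, that $\beta$ is genuinely a morphism of the relevant pairs so that $\beta^*$ acts on the relative cohomology motives, and that every induced frame $f'$ produced by Lemma \ref{idio} is non-zero. The geometric content of the shuffle, namely the matching of cells and forms, is already furnished by Proposition \ref{oublishu}; the real work lies in verifying that each manipulation stays within the class of framed mixed Tate motives for which Corollary \ref{motmzv} guarantees existence.
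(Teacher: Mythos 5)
Your proposal follows essentially the same route as the paper's proof: rewrite the product via the K\"unneth map as a single framed motive on $(\mnb{n}\sm \As k)\times(\mnb{m}\sm \As l)$ framed by $[\oms k\w\oms l]$ and $[\Phi_n\times\Phi_m]$, transport it along $\beta$ using Lemma \ref{idio}, split the cell frame $[\beta^{-1}(\Phi_n\times\Phi_m)]=\sum_\sigma[\Phi_{n+m}^\sigma]$ with Lemma \ref{som}, adjust the polar and boundary divisors with Lemma \ref{inclmot}, and finish by renumbering the marked points. The one technicality you flagged but left unresolved --- that $\beta$ is not directly a morphism of pairs into $\bigl((\mnb{n}\sm \As k)\times(\mnb{m}\sm \As l);B_{n,m}^{A'}\bigr)$ because the boundary divisor $B_0$ does not map onto $B_{n,m}$ --- is exactly what the paper handles by factoring through the intermediate pair with boundary divisor $\beta(B_0)$ and applying Lemma \ref{idio} to the inclusion $\alpha$, so your plan is sound as stated.
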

%%%%%%%%%%%%%%%%%%%
\begin{proof}To prove this equality,
  we need to display a map between the underlying vector spaces which
  respects the frames.

Let $A'$  be the boundary of $(\mnb{n}\sm \As
k)\times(\mnb{m}\sm \As l)$, it is equal to the divisor of
singularities of $\oms k \w \oms l$ on $\mnb{n}\times\mnb{m}$.

 Let $\ds A_0=\beta^{-1}(A')$ and let $B_0$ be the Zariski closure of the
boundary of
  $\Phi_0=\beta^{-1}(\Phi_n\times \Phi_m)$. Let $B_{n,m}$ be the Zariski
  closure of the boundary of $\Phi_n\times\Phi_ m$. The map $\beta$
  induces a map: 
\[\xymatrix{(\mnb{n+m}\sm A_0;B_0^{A_0})\ar[r]^<{\beta} &
\left((\mnb{n}\sm \As
k)\times (\mnb{m}\sm \As l);\beta(B_0)^{A'}\right)
\\
& \ar@{^(->}[u]^{\alpha}
\left((\mnb{n}\sm \As k)\times (\mnb{m} \sm \As l);B_{n,m}^{A'}\right).}\]

We introduce 
the vertical inclusion $\alpha$ because $B_0$ does not map onto
$B_{n,m}$ via $\beta$. The map $\alpha$ induces a map on the mixed Tate motives:
%%%%%%%%%%%%%%%%%%%%%%
%%%%%%%%%%%%%%
\eq \label{shm0}
\begin{array}{l}\ds
\HH^{n+m}\pa{(\mnb{n}\sm \As k)\times(\mnb{m}\sm \As
  l);\beta(B_0)^{A'}} 
\xrightarrow[\hspace{1pc}]{\alpha^*} \hspace{3.5cm}~ \\[2mm]
\multicolumn{1}{r}{
\ds \HH^{n+m} \pa{(\mnb{n}\sm \As k)\times
    (\mnb{m}\sm \As l);B_{n,m}^{A'}}.
}\end{array}
\eqf
%%%%%%

The frames on the RHS of (\ref{shm0}) are given by $[\Phi_n\times
\Phi_m]$ and $[\oms k\w \oms l]$.
Applying lemma \ref{idio} to (\ref{shm0}), $[\Phi_n \times \Phi_m]$ 
induces a map $\tilde{\Phi}$ from $\Q(0)$ to the $-2(n+m)$ graded part of the
LHS of (\ref{shm0}). In fact, since $\alpha$ is the identity map, we
have $[\tilde{\Phi}]=[\Phi_n\times\Phi_m]$, so $[\Phi_n\times \Phi_m]$
and $[\oms k \w \oms l]$ give frames
on the LHS of (\ref{shm0}) which are compatible with the map $\alpha^*$.

The map $\beta$ induces a map on the mixed Tate motives:
\begin{multline}\label{shm1}
%\begin{array}{l}\ds 
\HH^{n+m}\pa{(\mnb{n}\sm \As k)\times(\mnb{m}\sm \As
  l);\beta(B_0)^{ A'}}
\xrightarrow[\hspace{1pc}]{\beta^*} \\ %\hspace{4cm} ~\\[2mm]   
%\multicolumn{1}{r}{\ds
  \HH^{n+m}(\mnb{n+m}\sm A_0;\ab{0}{0}).
%}\end{array}
\end{multline}
%%%%%%%%%%%%%
On the RHS of (\ref{shm1}) the frames are
given by $[\om_0]$ where $\om_0$ is $\beta^*(\oms k\w \oms l)$ and
$[\Phi_0]=[\beta^{-1}(\Phi_n\times
\Phi_m)]$, which are compatible with the map $\beta^*$.

Now we can prove the proposition.
The K\"unneth formula gives a map:
%%%%%%%%%%%%%%%%%
\begin{multline*}%\begin{array}{l}\ds 
\HH^n\pa{\mnb{n}\sm \As k;\abs{n}{k}}\otimes \HH^m\pa{\mnb{m}\sm
  \As l;\abs{m}{l}}\xrightarrow{\hspace{2pc}}%\hspace{4cm} ~
%\multicolumn{1}{r}{\ds 
\\[2mm]
\HH^{n+m}\pa{(\mnb{n}\sm \As k)\times(\mnb{m}\sm \As l);
  B_{n,m}^{A'}%}
}.
%\end{array}
\end{multline*}
By theorem \ref{FMTMhopf}, this map also respects the frames, so the
associated framed mixed Tate motives are equal. By (\ref{shm0}), 
\[
\left[
\HH^{n+m}\pa{(\mnb{n}\sm \As k)\times(\mnb{m}\sm \As l);
  B_{n,m}^{A'}};[\oms k \ot \oms l];[\Phi_n\times \Phi_m] 
\right] 
\]
is equal to 
\[
\left[\HH^{n+m}\pa{(\mnb{n}\sm \As k)\times(\mnb{m}\sm \As
  l);\beta(B_0)^{A'}};[\oms k \ot \oms l]; [\Phi_n\times \Phi_m]
\right] ,
\]
which, using \eqref{shm1}, is equal to 
\[
\left[
 \HH^{n+m}(\mnb{n+m}\sm A_0;\ab{0}{0}) ; [\om_0] ; [\Phi_0]
\right].
\]

%%%%%%%%%%%%%%

It remains to show that
\begin{multline}
 \label{shm2} \left[\HH^{n+m}(\mnb{n+m}\sm
  A_0;\ab{0}{0});[\om_0];[\Phi_0]\right]=  \\ 
\sum_{\sigma} \left[
  \HH^{n+m} (\mnb{n+m}\sm A_{\sigma};\ab{n+m}{\sigma});
  [\om_{\sigma}]; [\Phi_{n+m}]\right].
\end{multline}

%%%%%%%%%%%%%%%%%

In the LHS of (\ref{shm2}), $B_0$ being included in
$B_{\on{sh}}=\bigcup_{\sigma}B_{\sigma}$,
we can replace $B_0$ by $B_{\on{sh}}$ using lemma \ref{inclmot}.

As $[\Phi_0]=\sum_{\sigma}[\Phi_{n+m}^{\sigma}]$, lemma \ref{som} shows that
the LHS of \ref{shm2} is equal to
\[\sum_{\sigma}\left[\HH^{n+m}(\mnb{n+m} \sm
  A_0;\ab{\on{sh}}{0});[\om_0];[\Phi_{n+m}^{\sigma}]\right]. \]
Using the fact that $B_{\sigma}\subset B_{\on{sh}}$ and an inclusion map,
lemma \ref{inclmot} shows that this framed motive is equal to
\[
\sum_{\sigma}\left[\HH^{n+m}(\mnb{n+m}\sm A_0;\ab{\sigma}{0});[\om_0];
[\Phi_{n+m}^{\sigma}]\right]. 
\]
As the divisor of singularities $A$ of $\om_0$ is included in $A_0$,
using lemma \ref{inclmot} we can replace $A_0$ by $A$ in this
framed motive. Then permuting the points gives an equality of framed
motives on each term of the sum, 
\[
\left[\HH^{n+m}(\mnb{n+m}\sm
  A_0;\ab{\sigma}{0});[\om_0];[\Phi_{n+m}^{\sigma}]\right],
  \]
  with
\[ 
\left[
  \HH^{n+m}(\mnb{n+m}\sm A_{\sigma};\ab{n+m}{\sigma});[\om_{\sigma}];
  [\Phi_{n+m}] \right].
  \]
Thus, we obtain the desired formula:
\[\begin{array}{l}\ds
\left[\HH^n\pa{\mnb{n}\sm \As k;B_n^{\As k}}; [\oms k];
  [\Phi_n]\right] \cdot \left[\HH^m \pa{\mnb{m}\sm \As l;B_m^{\As l}}; 
[\oms l]; [\Phi_m]\right]=\hspace{.5cm}~\\[2mm]
\multicolumn{1}{r}{\ds
\sum_{\sigma\in\on{sh}((1,\ldots,n),(n+1,\ldots,n+m))} \left[\HH^{n+m}
  \pa{\mnb{n+m}\sm A_{\sigma};B_{n+m}^{A_{\sigma}}};
  [\om_{\sigma}]; [\Phi_{n+m}]\right].}\end{array}\]

\end{proof}

\section{The stuffle case}
The goal of this section  is to be able to translate all the
calculations done in Section \ref{secstuint} into a motivic context. 
In order to achieve this goal, we need to define, for all $n$
greater than $2$, a variety $X_n\ra \A^n$ resulting from successive blow-ups of
$\A^n$ together with a differential form $\Om_{k_1,\ldots ,k_p}^s$ for
any tuple of integer $(k_1,\ldots, k_p)$ (with $k_1+\cdots k_p=n$) and any
permutation $s$ of $\ent{1,n}$. We use the $X_n$ to give another
definition of the motivic multiple zeta value which we show is
actually equal to Goncharov-Manin's. Then, using a natural map
from $X_{n+m}$ to an open subset of  $\m_{0,n+m+3}$, we use this new
definition to prove  that the motivic multiple zeta values satisfy the
stuffle relation. 

\subsection{Blow-up preliminaries}

\begin{lem}[Flag Blow-up Lemma; {%\cite{CCSFMP} and 
\cite{PCCSUly}}.]
\label{flbllem} Let $V^1_0 \subset V^2_0 \subset \cdots V^r_0 \subset W_0$ be
a flag of smooth subvarieties in a smooth algebraic variety $W_0$. For $k =
1, \ldots, r$,
define inductively $W_k$ as the blow-up of $W_{k-1}$ along $V^k_{k-1}$, then 
$V^k_k$ as the exceptional divisor in $W_k$ and $V^i_k$, $k\leqs i$, as the
proper transform of $V^i_{k-1}$ in $W_k$. Then the
preimage of $V^r_0$ in the resulting variety $W_r$ is a normal crossing divisor
$V^1_r \cup \cdots \cup V^r_r$.
\end{lem}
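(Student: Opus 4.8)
The plan is to reduce the statement to an explicit local computation. Smoothness and the normal crossing condition are \'etale-local (equivalently, formal-local) properties, and the blow-up of a smooth center commutes with \'etale base change and with passage to formal completions; so I would first replace $W_0$ by a formal (or \'etale) neighbourhood of a point $P \in V^1_0$. Away from $V^1_0$ the first blow-up is an isomorphism, and there the construction reduces to the flag blow-up of the shorter flag $V^2_0 \subset \cdots \subset V^r_0$, which is handled by induction on $r$ (the base case $r=1$ being trivial, since the preimage of $V^1_0$ is then the smooth exceptional divisor $V^1_1$). Thus the only genuine work is near the deepest stratum $V^1_0$.

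Near such a $P$, I would straighten the entire flag. Writing $c_i = \on{codim} V^i_0$, so that $c_1 \geqs c_2 \geqs \cdots \geqs c_r \geqs 1$, I would choose local coordinates $x_1,\ldots,x_N$ (where $N=\dim W_0$) in which $V^i_0 = \{x_1=\cdots=x_{c_i}=0\}$ for every $i$. Such a simultaneous linearisation exists because the members are smooth and nested: one builds the coordinates by descending induction along the flag, first cutting out the largest member $V^r_0$ smoothly, then cutting out $V^{r-1}_0$ inside the smooth $V^r_0$, and so on, each step being possible by formal smoothness. With the flag linearised, every center $V^k_{k-1}$ is a coordinate linear subspace, so the whole tower $W_r \to \cdots \to W_0$ is a monomial (toric) sequence of blow-ups of $\A^N$ along coordinate subspaces; equivalently it is a sequence of star subdivisions of the fan of $\A^N$.

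I would then carry the computation chart by chart, maintaining the invariant that after $k$ blow-ups the exceptional divisors $V^1_k,\ldots,V^k_k$ are distinct coordinate hyperplanes in each affine chart, and that the proper transforms $V^{k+1}_k,\ldots,V^r_k$ of the remaining, larger members are again coordinate subspaces transverse to all of those hyperplanes. Blowing up the smallest center first is precisely what keeps the larger members linear and transverse at each stage. Granting this invariant, in every chart of $W_r$ the morphism to $W_0$ is monomial, and the pullback of the ideal $(x_1,\ldots,x_{c_r})$ of $V^r_0$ is a monomial ideal whose radical is a product of distinct coordinate hyperplanes --- precisely the divisor $V^1_r \cup \cdots \cup V^r_r$. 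Hence the preimage of $V^r_0$ in $W_r$ is this normal crossing divisor, as claimed.

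The main obstacle is the bookkeeping in the previous paragraph: verifying that the proper transforms of the larger flag members stay linear and stay transverse to every accumulated exceptional divisor throughout the entire tower, in each of the many affine charts. The cleanest way to discharge this is to translate the whole picture into toric geometry --- the linear flag is a toric configuration, each blow-up is a star subdivision, and normal crossing of the final total transform of $V^r_0$ can then be read off directly from the smoothness and combinatorics of the resulting fan, bypassing the ad hoc chart computations. The only other delicate point, the simultaneous straightening of the flag, is standard but genuinely uses that each inclusion $V^i_0 \subset V^{i+1}_0$ is a closed immersion of \emph{smooth} varieties.
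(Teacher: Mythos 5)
The first thing to say is that the paper contains no proof of Lemma \ref{flbllem}: it is imported verbatim from Ulyanov \cite{PCCSUly} and used as a black box (the text immediately proceeds to name the resulting space $\on{Bl}_{\mathscr F} W_0$), so there is no internal argument to compare yours against; your proposal can only be judged on its own merits and against the standard proof in the literature.

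On those terms your argument is correct, and it is essentially the standard one (Ulyanov's own proof is a local-coordinate computation of the same kind, with explicit charts where you invoke fans). All three ingredients are sound: (i) blow-ups of smooth centers and strict transforms commute with flat --- hence \'etale or complete-local --- base change, and smoothness and normal crossing are \'etale/formal-local conditions, so localization at a point plus your induction on $r$ away from $V^1_0$ is a legitimate reduction to the deepest stratum; (ii) the simultaneous straightening of a nested chain of smooth subvarieties into coordinate subspaces $\{x_1=\cdots=x_{c_i}=0\}$ by descending induction along the flag is the standard adapted-regular-system-of-parameters argument (note the codimensions are \emph{strictly} decreasing, $c_1>c_2>\cdots>c_r$, since the inclusions are proper; this is what makes the next step work); (iii) the toric bookkeeping you defer does close up: each center corresponds to the face $\tau_k=\langle e_1,\ldots,e_{c_k}\rangle$ of the orthant, which survives the earlier star subdivisions precisely because $v_i=e_1+\cdots+e_{c_i}\notin\tau_k$ for $i<k$; star subdivision at the sum of the generators of a cone of a smooth fan is the blow-up of the corresponding orbit closure and keeps the fan smooth; the set-theoretic preimage of $V(\tau_r)$ is exactly the union of the divisors of the new rays $v_1,\ldots,v_r$ (a cone of the final fan containing no $v_i$ is a surviving face of the orthant, and a face whose relative interior meets the relative interior of a face containing $\tau_r$ must equal it, hence would contain $\tau_r$ and so $v_r$ --- contradiction); and any union of invariant divisors in a smooth toric variety is normal crossing. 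So there is no gap, only a deliberate appeal to standard toric facts; the one phrase I would tighten is the invariant "transverse to all of those hyperplanes", which should be justified by the remark that a strict transform is by definition not contained in any exceptional component, so in coordinate position transversality is automatic.
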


If $\mathscr F$ is a flag of subvarieties $V^i_0$ in a smooth algebraic variety
$W_0$ as in the previous lemma, the resulting space $W_s$ will be denoted by
$\on{Bl}_{\mathscr F} W_0$.

\begin{thm}[{\cite{COVHu}}]\label{blsq}Let $X_0$ be an open subset of a
nonsingular algebraic variety $X$.
Assume that $X \sm X_0$ can be decomposed as a finite union $\cup_{i \in I}
D_i$ of closed irreducible 
subvarieties such that
\begin{enumerate}
 \item for all $i\in I$, $D_i$ is smooth;
\item for all $i,j \in I $, $D_i$ and $D_j$ meet cleanly, that is
the scheme-theoretic intersection is smooth and the intersection of the
tangent space $T_X(D_i)\cap T_X(D_j)$ is the tangent space of the
intersection $T_X(D_i \cap D_j)$;
\item for all $i,j \in I $, $D_i\cap D_j = \emptyset$ ; or a disjoint union of
$D_l$.
\end{enumerate}
The set $\mc D = \{D_i\}_{i \in I}$ is then a poset. Let $k$ be the rank of
$\mc D$. Then there is a sequence 
of well-defined blow-ups
\[\on{Bl}_{\mc D} X \ra \on{Bl}_{\mc D \leqs k-1} X \ra \cdots \ra \on{Bl}_{\mc
D \leqs 0}
X\ra X\]
where $\on{Bl}_{\mc D \leqs 0} X\ra X$ is the blow-up of $X$ along $D_i$ of rank
$0$,
and, inductively, 
$\on{Bl}_{\mc D \leqs r} X \ra \on{Bl}_{\mc D \leqs r-1} X$ is the blow-up of
$\on{Bl}_{\mc D 
\leqs r-1} X$ along the proper transforms of
$D_j$ of rank $r$, such that
\begin{enumerate}
 \item $\on{Bl}_{\mc D} X$ is smooth;
\item $\on{Bl}_{\mc D} X \sm X_0 = \bigcup_{i \in I}\wt{D_i}$ is a divisor with
normal crossings;
\item for any integer $k$, $\wt{D_{i_1}}\cap \cdots \cap \wt{D_{i_k}}$ is
non-empty if and only if, up to numbering, $D_{i_1} \subset \cdots \subset D_{i_k}$ form a chain
in the
poset $\mc D$. Consequently, $\wt{D_i}$ and $\wt{D_j}$ meet if and only if
$D_i$ and $D_j$ are
comparable.
\end{enumerate}
\end{thm}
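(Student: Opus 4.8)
The plan is to proceed by induction on the rank $r$ of the poset $\mc D$, performing at stage $r$ all the blow-ups of rank $r$ simultaneously, and checking that after each stage the collection of proper transforms of the strata of higher rank, together with the exceptional divisors already produced, again forms a clean arrangement satisfying hypotheses (1)--(3). The engine of the whole argument is a \emph{local linearisation}: since any two members $D_i$, $D_j$ meet cleanly, one can choose \'etale-local coordinates on $X$ near any point in which every $D_i$ passing through that point becomes a coordinate linear subspace $L_i$, and hypothesis (3) then forces $L_i\cap L_j$ to be again such a coordinate subspace (a strictly smaller member of the arrangement) or empty. In this local model the statement reduces to the De Concini--Procesi / wonderful-model construction for a subspace arrangement, where everything can be checked in explicit coordinates.

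First I would record the poset structure: order $\mc D$ by inclusion, so that $D_l\leqs D_i$ means $D_l\subseteq D_i$, and define the rank of $D_i$ as the length of the longest chain of strict inclusions below it, so the minimal (rank $0$) members are the smallest strata. Hypothesis (3) guarantees that the intersection of two members is a union of members strictly smaller than both, which is exactly what makes the rank function behave well; the blow-up sequence then blows up the deepest strata first.

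The heart of the induction is a separation statement, proved in the local linear model. If $D_i$ and $D_j$ both have rank $r$ and meet, then $D_i\cap D_j$ is (locally) a union of members of rank $<r$, all of which have been blown up by the time we reach stage $r$; I claim the proper transforms $\wt{D_i}$, $\wt{D_j}$ in $\on{Bl}_{\mc D\leqs r-1}X$ are \emph{disjoint}. Concretely, writing $L_i,L_j$ for the local linear models and $L=L_i\cap L_j$, after blowing up $L$ the proper transforms meet the exceptional divisor $\p(N_{L/X})$ along the projectivised subbundles $\p(N_{L/L_i})$ and $\p(N_{L/L_j})$, which are disjoint precisely because cleanness gives $N_{L/L_i}\cap N_{L/L_j}=0$ inside $N_{L/X}$. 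This disjointness is what makes the stage-$r$ blow-up \emph{well defined} (the union of the rank-$r$ centres is a disjoint union of smooth subvarieties, so the result is independent of the order in which one blows them up) and keeps each proper transform smooth. The same local computation shows that (1)--(3) are inherited by the higher-rank proper transforms and by the newly created exceptional divisors, so the induction closes.

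Finally, conclusions (1)--(3) of the theorem are read off locally. Along a single chain $D_{i_1}\subset\cdots\subset D_{i_k}$ the blow-up sequence restricted to that flag is exactly the situation of the Flag Blow-up Lemma~\ref{flbllem}, which already yields smoothness of $\on{Bl}_{\mc D}X$ and the normal-crossing conclusion for $\wt{D_{i_1}}\cup\cdots\cup\wt{D_{i_k}}$; the genuinely new input is that \emph{incomparable} strata are pulled apart by the earlier blow-ups, so that $\wt{D_{i_1}}\cap\cdots\cap\wt{D_{i_k}}\neq\emptyset$ forces the $D_{i_j}$ to form a chain, whence the final "consequently" about comparability. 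I expect the main obstacle to be precisely the inductive bookkeeping: verifying that cleanness and the poset combinatorics are faithfully preserved after each wave of blow-ups, including the interaction between proper transforms of strata and the newly created exceptional divisors, which is where the explicit local coordinates do the real work.
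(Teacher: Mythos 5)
There is nothing in the paper to compare your argument against: Theorem~\ref{blsq} is imported from \cite{COVHu} and stated without proof; the only place the paper engages with the internals of that construction is Proposition~\ref{Tatebl}, which invokes Theorems~1.4 and~1.7, Corollary~1.6 and Lemma~2.1 of \cite{COVHu} to track Tate-ness through the blow-up sequence. Measured against the cited source, your sketch follows essentially the same route as Hu's proof: induction on the rank of the poset $\mc D$, blowing up at stage $r$ the proper transforms of all rank-$r$ members at once, with the two load-bearing points being (a) the separation statement --- incomparable strata have been pulled apart by the earlier blow-ups, so the stage-$r$ centres are pairwise disjoint and the stage is well defined --- and (b) the verification that conditions (1)--(3) are inherited by the proper transforms together with the new exceptional divisors. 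Point (a) is precisely Hu's Lemma~2.1, which the paper itself uses inside Proposition~\ref{Tatebl}, so your reconstruction is faithful to how the result is actually proved and used.

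One caveat on your "engine": simultaneous \'etale-local linearisation of \emph{all} members through a point is not a formal consequence of pairwise clean intersection plus condition (3), and Hu's argument does not rely on it. Fortunately, the computation you actually perform does not need it either: if $L$ is (a connected component of) $D_i\cap D_j$, then the proper transforms of $D_i$ and $D_j$ under the blow-up of $L$ meet the exceptional divisor in $\p(N_{L}D_i)$ and $\p(N_{L}D_j)$, and these are disjoint inside $\p(N_{L}X)$ because cleanness gives $N_{L}D_i\cap N_{L}D_j = \left(T_X(D_i)\cap T_X(D_j)\right)/T_X(L)=0$ --- no coordinates required. If you excise the linearisation claim and phrase the separation and inheritance steps in terms of normal bundles of clean pairs (which is how \cite{COVHu} proceeds), your sketch becomes a correct outline; what it still omits, as you yourself note, is the genuine bulk of Hu's proof, namely the inductive bookkeeping showing that the enlarged collection of proper transforms and exceptional divisors again satisfies (1)--(3) after every stage.
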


The fact that blow-ups are local constructions yields directly  the following corollary.

\begin{coro}[Flags blow-up sequence]\label{flblsq}
 Let $X$ and $ \mc D$ be as in the previous theorem. Let $\mscr F_1, \ldots ,
\mscr F_k$ be flags of subvarieties of $\mc D$ such that
\begin{enumerate}
 \item $\mscr F_1, \ldots ,\mscr F_k$ is a partition of $\mc D$,
\item if $D$ is in some $\mscr F_i$, then for all $D' \in \mc D$ with $D' <D$
there exists some $j\leqs i $ such that $D' \in \mscr F_j$.
\end{enumerate}
If $\mscr F^i_j$ denotes the flag of the proper transform of elements of
$\mscr F^{i-1}_j$ in 
\[\on{Bl}_{\mscr F^{i-1}_i}\left(\cdots
\left(\on{Bl}_{\mscr F_1}X
\right) \cdots \right),\] 
then
\[ \on{Bl}_{\mc D} X = \on{Bl}_{\mscr F^{k-1}_k}\left(\cdots
\left(\on{Bl}_{\mscr F_1}X
\right) \cdots \right).
\] 
We will denote such a sequence of blow-ups by 
\[
 \on{Bl}_{\mc F_k, \ldots, \mc F_1} X.
\]

\end{coro}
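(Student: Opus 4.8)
The plan is to show that both the rank-by-rank sequence of Theorem \ref{blsq} and the flag-by-flag sequence on the right-hand side perform \emph{exactly the same elementary blow-ups}---one along the proper transform of each $D \in \mc D$---merely in a different order, and that the resulting variety is independent of the order as long as that order is compatible with the partial order on $\mc D$. First I would record that, since the flags partition $\mc D$, each $D \in \mc D$ is blown up once and only once in the flag sequence; within a single flag $\mscr F_i$ the Flag Blow-up Lemma (Lemma \ref{flbllem}) treats the members of the chain from the smallest to the largest.

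Next I would check that the flag order is a \emph{linear extension} of $\mc D$, i.e.\ whenever $D' < D$ the proper transform of $D'$ is blown up strictly before that of $D$. If $D \in \mscr F_i$ and $D' < D$, hypothesis (2) gives some $j \leqs i$ with $D' \in \mscr F_j$: when $j < i$ the stratum $D'$ is treated in an earlier flag, and when $j = i$ both lie in the same chain $\mscr F_i$, so the Flag Blow-up Lemma treats the smaller one first. The rank-by-rank order of Theorem \ref{blsq} is likewise a linear extension, since comparable strata have strictly increasing rank while strata of equal rank are pairwise incomparable. Thus the two sequences are two poset-compatible total orders on the same collection of centers.

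The heart of the argument is the local commutation of blow-ups along incomparable centers. If $D_i$ and $D_j$ are incomparable then, by hypotheses (2)--(3) of Theorem \ref{blsq}, they (and, inductively, their proper transforms) are either disjoint or meet cleanly in a union of strictly smaller strata; in either case blowing up one and then the proper transform of the other yields the same variety independently of the order. Since blow-ups are local constructions, this commutation may be verified on affine charts, where disjoint centers commute trivially and cleanly meeting centers reduce to independent blow-ups of coordinate subspaces. Any two poset-compatible total orders differ by a sequence of transpositions of adjacent incomparable elements, so applying this commutation repeatedly carries the rank order into the flag order without altering the blown-up variety; this yields the claimed equality $\on{Bl}_{\mc D} X = \on{Bl}_{\mscr F^{k-1}_k}(\cdots(\on{Bl}_{\mscr F_1}X)\cdots)$.

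The step I expect to be the main obstacle is ensuring that the clean-meeting (or disjointness) of incomparable centers \emph{persists} through the intermediate blow-ups, so that the commutation lemma is legitimately applicable at each transposition; this is exactly what is guaranteed by the structure built into conditions (2) and (3) of Theorem \ref{blsq}, which is why the corollary follows directly once the reordering is organised as above.
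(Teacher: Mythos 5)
Your reduction of the problem to reordering --- both the rank-by-rank sequence of Theorem \ref{blsq} and the flag-by-flag sequence are orders compatible with the poset $\mc D$, and any two such orders differ by transpositions of adjacent incomparable elements --- is sound (and more explicit than the paper, which only remarks that the corollary follows from the locality of blow-ups). But the step you call the heart of the argument is false as stated: blow-ups along two cleanly meeting centers do \emph{not} commute in general, even for coordinate subspaces. Take $X=\A^3$, $C=\{x=y=0\}$, $D=\{x=z=0\}$, two coordinate lines meeting cleanly at the origin, and let $D'$ be the proper transform of $D$ in $\on{Bl}_C\A^3$. On the chart of $W_1=\on{Bl}_{D'}\on{Bl}_C\A^3$ with coordinates $(\alpha,y,\gamma)$, where $x=y\alpha$, $z=\alpha\gamma$, the inverse image ideal of $D$ is $(y\alpha,\alpha\gamma)=\alpha\cdot(y,\gamma)$, which is not principal at the point $(\alpha_0,0,0)$ with $\alpha_0\neq 0$; by the universal property of blow-ups there is therefore no morphism $W_1\ra \on{Bl}_D\A^3$ over $\A^3$, hence no isomorphism over $\A^3$ between $W_1$ and $W_2=\on{Bl}_{C'}\on{Bl}_D\A^3$. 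So ``cleanly meeting centers reduce to independent blow-ups of coordinate subspaces'' is precisely the step that fails.

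What makes the corollary true --- and what your argument is missing --- is that in a poset-compatible order the cleanly-meeting case never actually arises at the moment of a transposition. If $D_i$ and $D_j$ are incomparable and adjacent in such an order, then by condition (3) of Theorem \ref{blsq} every irreducible component $D_l$ of $D_i\cap D_j$ belongs to $\mc D$ and is strictly smaller than both, so all of them have already been blown up earlier in the sequence; and blowing up the components of the intersection of two cleanly meeting subvarieties makes their proper transforms \emph{disjoint} (this is Lemma 2.1 of \cite{COVHu}, the separation statement the paper itself invokes in the proof of Proposition \ref{Tatebl}). Once the two centers are disjoint, the transposition is harmless, and your reordering argument goes through; only the trivial disjoint case of commutation is ever needed. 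My two-lines example illustrates the mechanism: enlarging the collection to $\{O,C,D\}$ with $O$ the origin restores condition (3), every admissible order must blow up $O$ first, and it is exactly that first blow-up which separates the two lines and makes the remaining blow-ups order-independent.
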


As we want to apply these results in order to have a motivic description of the
stuffle product in terms of blow-ups, we need some more precise
information about what sort 
of motives arise from the construction of Theorem \ref{blsq}. Following the
notation of the article \cite{COVHu}, in particular using the proof of theorems
1.4, 1.7 and Corollary 1.6, we deduce the following proposition:

\begin{prop}\label{Tatebl}
Suppose that $X$ and $\mc D=\cup D_i$ as in proposition \ref{blsq} are such
that $X$ and all the $D_i$ are Tate varieties. Let $\mc E^{r+1}$ be the set of
exceptional divisors of $\on{Bl}_{\mc D^{\leqs r}}X\ra X$. Then all possible
intersections of strata 
of $\mc D^{ r+1}\cup \mc E^{r+1}$ are Tate Varieties and so is   $\on{Bl}_{\mc
D^{\leqs r}}X$.
\end{prop}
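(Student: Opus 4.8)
The plan is to argue by induction on $r$, using at each stage the locality of blow-ups together with the two standard formulas for the motive of a blow-up and of a projective bundle. Write $W_r=\on{Bl}_{\mc D^{\leqs r}}X$, so that $W_{-1}=X$. Recall that if $Z\hra W$ is a smooth closed subvariety of pure codimension $c$ in a smooth variety $W$, the blow-up formula gives
\[
M(\on{Bl}_Z W)\cong M(W)\op\bigoplus_{j=1}^{c-1}M(Z)(j)[2j],
\]
while the exceptional divisor, being the projectivised normal bundle $\p(N_{Z/W})$, satisfies $M(\p(N_{Z/W}))\cong\bigoplus_{j=0}^{c-1}M(Z)(j)[2j]$ by the projective bundle formula. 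In particular, whenever $W$ and $Z$ are both Tate varieties, so are $\on{Bl}_Z W$ and the exceptional divisor. The base case of the induction is the hypothesis itself: $X$ and all the $D_i$ are Tate.

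For the inductive step, assume $W_{r-1}$ is Tate and that every intersection of strata in $\mc D^{r}\cup\mc E^{r}$ is Tate. The variety $W_r$ is obtained from $W_{r-1}$ by blowing up the proper transforms of the rank-$r$ elements of $\mc D$. Elements of the same rank are incomparable, so by Theorem \ref{blsq} their proper transforms do not meet at this stage; hence we blow up along a disjoint union of smooth centres. Each such centre is one of the strata of $\mc D^{r}\cup\mc E^{r}$ and is therefore Tate by the inductive hypothesis. Applying the blow-up formula along this disjoint union shows that $W_r$ is Tate, and each exceptional divisor created at this stage is a projective bundle over a Tate centre, hence Tate; together with the proper transforms of the earlier exceptional divisors, these form $\mc E^{r+1}$.

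It remains to check that every nonempty intersection of strata of $\mc D^{r+1}\cup\mc E^{r+1}$ is Tate. By Theorem \ref{blsq} such an intersection is nonempty only along a chain in the poset $\mc D$, and by the locality of blow-ups it may be computed in an affine chart. There it is realised as an iterated projective bundle (or a product with an affine space) over the corresponding intersection of the original centres; by assumption the latter intersection is empty or a disjoint union of some $D_l$ (Theorem \ref{blsq}), hence Tate. Repeated application of the projective bundle formula then shows the intersection is Tate, completing the induction. This is precisely the propagation encoded in the proofs of Theorems 1.4 and 1.7 and Corollary 1.6 of \cite{COVHu}.

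I expect the main obstacle to be the bookkeeping of proper transforms: one must verify that the clean-intersection hypotheses of Theorem \ref{blsq} are preserved at each stage, and identify each intersection stratum concretely as a tower of projective bundles over an intersection of original strata. Once the chain condition of Theorem \ref{blsq} is used to restrict attention to comparable strata and locality reduces everything to a chart, the identification becomes routine, and the Tate property follows formally from the two motive formulas above.
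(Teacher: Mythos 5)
Your overall strategy coincides with the paper's: induction on $r$, with the blow-up formula and the projective bundle formula as the only motivic inputs, and the structure results of \cite{COVHu} (Theorem 1.4, Corollary 1.6, Lemma 2.1) doing the geometric bookkeeping. The base case and the part of the inductive step showing that $\on{Bl}_{\mc D^{\leqs r}}X$ and the exceptional divisors created at the latest stage are Tate are correct, and match the paper.

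The gap is in your final paragraph, which is where almost all of the paper's actual proof lives. First, your structural claim is wrong as stated: a nonempty intersection of strata of $\mc D^{r+1}\cup\mc E^{r+1}$ is in general \emph{not} an iterated projective bundle over an intersection of the original centres. Intersections of proper transforms of the $D_i$ are iterated \emph{blow-ups} of the original intersections (birational modifications, not bundles); only intersections with a newly created exceptional divisor are projectivised normal bundles. Second, the appeal to "locality" and an "affine chart" cannot establish Tateness: being a Tate variety is a global property of the motive, and the paper never localises, but instead produces global identifications of each intersection. Concretely, the missing case analysis is the following. (i) At most one exceptional divisor created at the latest stage can occur in a nonempty intersection, because the rank-$r$ centres were separated by the earlier blow-ups; this is what makes the enumeration finite and tractable. (ii) If the latest-stage divisor involved is the proper transform of an old $E_k^r$, then, setting $Y=D_i^r\cap E_1^r\cap\cdots\cap E_k^r$ (Tate by the inductive hypothesis), the intersection in question equals $\on{Bl}_{D_j^r\cap Y}\,Y$, hence is Tate by the blow-up formula. (iii) If instead it is the exceptional divisor of the blow-up along some $D_j^r$, then, setting $Y=D_i^r\cap E_1^r\cap\cdots\cap E_{k-1}^r$, the intersection is either empty or equal to $\p(N_Y(Y\cap D_j^r))$, hence Tate by the projective bundle formula. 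Without (i)--(iii) the inductive hypothesis on intersections cannot be propagated to stage $r+1$, so this step must be supplied rather than deferred as "routine".
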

\begin{proof} Mainly following the proof of theorem 1.7 in \cite{COVHu}, we use
an induction on $r$.

If $r=0$ then $\on{Bl}_{\mc D^{\leqs 0}}X\ra X$ is the blow-up along the disjoint
subvarieties $D_i$ of rank $0$.

All the exceptional divisors in $\mc E^1$ are of the form $\p (N_X D_i) $ (with
$D_i$ of rank $0$), and as the $D_i$ are Tate, so are the exceptional divisors.

The blow-up formula
\begin{align}
h(X_Z)=\h(X) \bigoplus_{i=0}^{d-1} h(Z)(-i)[-2i]
\end{align}
tell us that the blow-up of a Tate variety  $X$ along some Tate variety 
$Z$ of codimension $d$ is
a Tate variety. Then  $\on{Bl}_{\mc D^{\leqs 0}}X$ is Tate. Moreover, 
if
$D_i^1$ is an element of $\mc D^1$, then it is the proper transform of an 
element
$D_i$ in $\mc D$ of rank greater than $1$. Theorem 1.4 in 
\cite{COVHu} tells
us that $D^1_i=\on{Bl}_{D_j \subset D_i ; rank(D_j)=0}D_j$ and is
therefore  a Tate variety.

We now need to show that all intersections of strata of $\mc D^1\cap \mc E^1 $
are Tate. As $\on{Bl}_{\mc D^{\leqs 0}}X\ra X$ is a blow-up along
disjoint subvarieties, the exceptional divisors are disjoint and we
conclude that elements in $\mc E^1$ do not intersect. 

Let $D^1_i$ and $D^1_j$ be two elements of $\mc D^1$ ; they are the 
proper transforms
of $D_i$ and $D_j$ in $\mc D$. If $D_i \cap D_j=\emptyset$ then, the same
hold for their proper transforms and there is nothing to
prove. Otherwise, by assumption, $D_i \cap 
D_j$ is  a a disjoint union $\cup D_l$. If the maximal rank of the
$D_l$ is $0$ then Lemma 2.1 in 
\cite{COVHu} ensures that the proper transforms have an empty intersection.   If
the maximal rank of the $D_l$ is greater than $1$ the fact that $D_i$ and $D_j$
meet cleanly ensures that the proper transform of the intersection is the
intersection of the proper transforms, that is 

\[D_i^1 \cap D_j^1= \on{Bl}_{D_l \subset D_i\cap D_j ; rank(D_l)=0}D_i\cap D_j
\]
and the intersection is Tate because $D_i\cap D_j$ is a disjoint union 
of $D_l$ which are Tate.
Moreover from theorem 1.4 (\cite{COVHu}) we have  $D_i^1 \cap D_j^1= \cup
D_l^1$. Thus we can consider only intersections of the form $E^1 \cap D_i^1$
with $E^1$ in $\mc E^1$ and $D_i^1$ in $\mc D^1$. Such an intersection is non
empty if and only if $E^1$ comes from an  element $D_j$ of rank $0$ in $\mc D$ with $D_j \subset D_i$.
Then $E^1 \cap D_i^1$ is $\p(N_{D_i}D_j)$ and is a Tate variety.

\textbf{Assume the statement is true for $\on{Bl}_{\mc D^{\leqs r-1}}X$, $\mc
E^r$ and $\mc D^r$.} By corollary 1.6 in \cite{COVHu}, the blow-up
$\on{Bl}_{\mc D^{\leqs r}}X\ra \on{Bl}_{\mc D^{\leqs r-1}}X$ is 

\[\on{Bl}_{\mc D_{\leqs 0}^r}\left(\on{Bl}_{\mc D^{\leqs r}}X\right)\lra
\on{Bl}_{\mc D^{\leqs r-1}}X.
\]

This is a blow-up along elements in $\mc D^r$ of rank $r$ which by
assumption are Tate, as $\on{Bl}_{\mc D^{\leqs r-1}}X$. Then, $\on{Bl}_{\mc
D^{\leqs r}}X$ and the new exceptional divisors are Tate. The other exceptional
divisors are proper transforms of elements in $\mc E^r$ and are of the form 

\[E_i^{r+1} =\on{Bl}_{E_i^r \cap D_l^r ; rank(D_l)=r}E_i^r
\]
with $E_i^r$ in $\mc E^r$ and $D_l^r$ in $\mc D^r$ coming from some $D_l$ in
$\mc D$. As by the induction hypothesis both $E_i^r$ and $E_i^r\cap D_l^r$ are
Tate, $E_i^{r+1}$ is a Tate variety. The same argument proves that all elements
in $\mc D^{r+1}$ are Tate. As previously the intersection of two elements in
$\mc D^{r+1}$ is either empty or the proper transform of the intersection of
two elements in $\mc D^r$, again this proper transform is Tate.

Theorem 1.4 tells us that  the intersection $D_i^{r+1}\cap D_j^{r+1}$ of two
elements of $\mc D^{r+1}$ is either empty or the union of some elements
$D_l^{r+1}$ in $\mc D^{r+1}$. Then, to  prove that all possible intersections
of strata of $\mc E^{r+1}\cup\mc D^{r+1}$ is Tate, it is enough to prove that
the intersection of some $D_i^{r+1}$ with any intersection $E_1^{r+1}\cap
\cdots E_k^{r+1}$is Tate.

If two of the $E_i^{r+1}$ are exceptional divisors of $\on{Bl}_{\mc D_{\leqs
0}^r}\left(\on{Bl}_{\mc D^{\leqs r}}X\right)\ra \on{Bl}_{\mc D^{\leqs r-1}}X$
then the intersection is empty because the corresponding strata $D_i^r$ and
$D_j^r$ have an empty intersection (they have been separated at a previous
stage). 

Hence at most one of the $E_i^{r+1}$ is an exceptional divisor coming from
the last blow-up, and we can suppose that the strata $D_i^{r+1}, E_1^{r+1},
\ldots , E_{k-1}^{r+1}$ come from strata at the previous stage
$D_i^{r}, E_1^{r},\ldots , E_{k-1}^{r}$.
\begin{itemize}
 \item Suppose that $E_k^{r+1}$ is the proper transform of an exceptional
divisor $E_k^r$ in $\mc E^r$.  The subvariety $Y=D_i^r \cap E_1^r \cap \cdots
E_k^r$ is Tate by the induction hypothesis and its proper transform is 
\[
 \on{Bl}_{D_j^r \cap Y ; rank(D_j)=r} Y
\]
which is a Tate variety ($D_j^r \cap Y $ is either empty or Tate and $Y$ is
Tate). On the other side the proper transform of $Y$ is the intersection
$D_i^{r+1}\cap E_1^{r+1}\cap
\cdots \cap E_{k}^{r+1}$, which is therefore Tate.
\item Suppose that $E_k^r$ is the exceptional divisor coming from the blow-up
of $\on{Bl}_{\mc D^{\leqs r-1}}X$ along $D_j^r$. Let $Y$ be the intersection
$D_i^r \cap E_1^r \cap \cdots
E_{k-1}^r$. Then $D_j^r \cap Y$ is either empty or a Tate variety In the first
case the intersection $D_i^{r+1}\cap E_1^{r+1} \cap \cdots \cap E_k^{r+1}$ is
empty. In the latter case we have 
\[
 D_i^{r+1}\cap E_1^{r+1} \cap \cdots \cap E_k^{r+1}= \p(N_Y Y\cap D_j^r)
\]
which is Tate.
\end{itemize}
\end{proof}

\subsection{The space $X_n$ and some of its properties} \label{conXn}

Let $n$ be an integer greater than $2$ and let $x_1,
\ldots,x_n$ be the natural coordinates on $\A^n$. We define the divisors $A_I$,
$B^0_i$, $B^1_i$, $A_n$, $B_n$, $D^1_n$, $D^0_n$ and $D_n$ as follows:
\begin{itemize}
 \item for all non empty subsets $I$ of $\ent{1,n}$, $A_I$ is the
   divisor defined by  
\[1-\prod_{i\in I} x_i =0;\]
\item for all $i \in \ent{1,n}$, $B^0_i$ is the divisor defined by $x_i=0$;
\item for all $i \in \ent{1,n}$, $B^1_i=A_{\{i\}}$ is the divisor defined by
$1-x_i=0$;
\item $B_n$ is the union $ (\bigcup_i B^0_i)\bigcup(\bigcup_i B^1_i) $;
\item $A_n$ is the union $\bigcup_{I\subset \ent{1,n}; |I|\geqs2} A_I$;
\item $D^1_n$ is the union $\bigcup_{I\subset \ent{1,n}; I\neq \emptyset} A_I$ ;
\item $D^0_n$ is the union $\bigcup_i B^0_i$;
\item $D_n$ is the union $D^0_n \bigcup D^1_n$.
\end{itemize}
 
\begin{rem}
 The divisor $B_n$ is the Zariski closure of the boundary of the real cube
$C_n=[0,1]^n$ in $\A^n (\R)$.
\end{rem}

As the divisor $D_n$ is not normal crossing, we would like to find a
suitable succession of blow-ups that will allow us to have a normal
crossing divisor $\wh{D}_n$ over $D_n$. In order to achieve this we
first need the following remark and lemmas.

\begin{rem}\label{remtrans}
 Let $I$ be a non-empty subset of $\ent{1,n}$ and $x=(x_1,\ldots,x_n)$ a point
in
$A_I$. Then the normal vector of $A_I$ at the point $x$ is 
\begin{align}
 n^{A_{I}}_{|x}=\sum_{i \in I} \frac{1}{x_i}\on{d}\!x_i. \label{vecnor}
\end{align}
Therefore, if $I$ and $J$ are two distinct non-empty subsets of $\ent{1,n}$,
the
intersection of $A_I$ and $A_J$ is transverse.
\end{rem}

\begin{lem}\label{lemfondhyp} Let $I_1, \ldots, I_k$ be $k$ subsets of
$\ent{1,n}$ and $X$ the intersection $A_{I_1}\cap \cdots \cap
A_{I_k}\subset \A^n$. Then, there exist non negative integers $r$ and
$s$ with $r>0$, $s+r \leqs n$   and
integers $c_1,\ldots, c_r$ such that $X$ is isomorphic to 
\[
\A^s\times \Gm^{n-s-r}\times \prod_{i=1}^{r}\{x^{c_i}=1\}.
\]
\end{lem}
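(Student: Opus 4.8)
The plan is to separate the variables that actually occur in the defining equations from those that do not, and then to recognise the remaining locus as the kernel of a homomorphism of algebraic tori, which Smith normal form (the structure theory of finitely generated abelian groups) puts into the desired form. First I would set $S=\bigcup_{j=1}^{k}I_j\subseteq\ent{1,n}$ and $d=|S|$. A coordinate $x_i$ with $i\notin S$ appears in none of the equations $1-\prod_{i\in I_j}x_i=0$, so it ranges freely over $\A^1$; hence $X\cong\A^{s}\times X'$ with $s=n-d$, where $X'\subset\A^{S}$ (affine space on the coordinates $(x_i)_{i\in S}$) is cut out by the $k$ equations $\prod_{i\in I_j}x_i=1$. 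Since every $i\in S$ lies in some $I_j$ and a product equal to $1$ cannot have a vanishing factor, each coordinate $x_i$ with $i\in S$ is invertible on $X'$, so $X'$ is a closed subvariety of the torus $\Gm^{S}\cong\Gm^{d}$.

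Next I would encode $X'$ group-theoretically. Let $M$ be the $k\times d$ integer matrix with $M_{j,i}=1$ if $i\in I_j$ and $0$ otherwise, and let $\vp\colon \Gm^{d}\to\Gm^{k}$ be the homomorphism whose $j$-th component is $t\mapsto\prod_{i\in S}t_i^{M_{j,i}}=\prod_{i\in I_j}t_i$. Then $X'=\ker\vp$. By Smith normal form there are unimodular matrices $U\in GL_k(\Z)$ and $V\in GL_d(\Z)$ and integers $c_1,\ldots,c_r>0$, where $r=\on{rank} M$, such that $UMV=\on{diag}(c_1,\ldots,c_r,0,\ldots,0)$ as a $k\times d$ matrix. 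The matrices $U$ and $V$ induce monomial automorphisms $\psi_U$ of $\Gm^{k}$ and $\psi_V$ of $\Gm^{d}$ (Laurent-monomial coordinate changes, invertible because $U,V$ are invertible over $\Z$). By covariant functoriality of tori the composite $\vp'=\psi_U\circ\vp\circ\psi_V$ is the homomorphism with matrix $UMV$, and since $\psi_U,\psi_V$ are automorphisms, $\psi_V$ restricts to an isomorphism $\ker\vp'\to\ker\vp=X'$. In the corresponding coordinates $(y_1,\ldots,y_d)$ the kernel $\ker\vp'$ is defined simply by $y_1^{c_1}=1,\ldots,y_r^{c_r}=1$, the remaining coordinates $y_{r+1},\ldots,y_d$ being unconstrained, so $X'\cong \prod_{i=1}^{r}\{x^{c_i}=1\}\times\Gm^{d-r}$.

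Combining the two steps and using $d-r=(n-s)-r$ gives
\[ X\cong\A^{s}\times\Gm^{n-s-r}\times\prod_{i=1}^{r}\{x^{c_i}=1\}. \]
It then remains to verify the numerical constraints: $r=\on{rank} M\geqs 1$ because the $I_j$ are non-empty, so $M$ has a non-zero row; and $s+r=(n-d)+r\leqs n$ because $r\leqs d=|S|$, while $s=n-d\geqs 0$. The only genuinely delicate point, and the step I would treat most carefully, is the passage through the monomial change of variables: one must check that a unimodular integer matrix really yields an isomorphism of tori defined over $\Q$ (in fact over $\Z$), that composition of such monomial maps corresponds to the product of the integer matrices, and that conjugating $\vp$ by these automorphisms transports its kernel isomorphically. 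Everything else is the routine bookkeeping of Smith normal form.
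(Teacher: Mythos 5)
Your proof is correct and follows essentially the same route as the paper's: first split off the affine factor $\A^s$ of the coordinates not appearing in any $I_j$, observe that the remaining locus lies in the torus, and then diagonalize the exponent data by a monomial (unimodular) change of coordinates coming from the structure theory of subgroups of $\Z^n$. The only cosmetic difference is in packaging: the paper applies the adapted-basis form of the theorem to the lattice $L\subset\Z^n$ spanned by the characteristic vectors of the $I_j$ (using that $X$ depends only on $L$, so a single basis change of the source torus suffices), whereas you keep both unimodular factors $U,V$ of the Smith normal form of the matrix $M$ — these are equivalent formulations of the same argument.
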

\begin{proof} If $|I_1 \cup \cdots \cup I_k|=a<n$ then $X$ is
isomorphic to $(A_{I_1}'\cap \cdots \cap A_{I_k}')\times
\A^{n-a}\subset \A^a\times \A^{n-a}$, where the $A_{I_i}'$ are defined
by the same equations, $1-\prod_{j\in I_i}x_j=0$, that define
$A_{I_i}$ but viewed in $\A^a$ instead of $\A^n$. Putting $s= n-a$, it
is enough to show that we have 
\[
X'=(A_{I_1}'\cap \cdots \cap A_{I_k}') \simeq \Gm^{a-r}\times
\prod_{i=1}^{r}\{x^{c_i}=1\}. 
\]
We now assume that $|I_1 \cup \cdots \cup I_r|=n$.

For any tuple $\lambda=(\lambda_1, \ldots, \lambda_n)$ of integers and any $x$ in $\A^n$,
let $x^{\lambda}$ denote the product
$\prod_{j=1}^nx_j^{\lambda_j}$. For $i$ in $\ent{1,n}$, let
$a_i=(a_{i1},\ldots, a_{in})$ be the element of $\Z^n$ defined by 
\[
\forall j, \, 1\leqs j \leqs n \quad a_{ij}=\delta_{I_i}(j) 
\]
where $\delta_{I_i}$ is the characteristic function of the set
$I_i$. Using these notations, $X$ is defined by the equations
\[
x^{a_1}=\cdots=x^{a_k}=1.
\]

Let $L$ be the submodule of $\Z^n$ spanned by $a_1,\ldots, a_n$, and
let $r$, $r \leqs k$, be its rank. For $\lambda$ in $L$, writing
\[
\lambda=\alpha_1 a_1 +\cdots \alpha_k a_k,  
\]
we see that for any $x$ in $\A^n$ we have 
\[
x^{\lambda}= \prod_{i=1}^n \left(x^{a_i}\right)^{\alpha_i}.
\]
In consequence, $x$ is in $X$ if and only if for all $\lambda $ in $L$
one has $x^{\lambda}=1$.

The module $L$ being a submodule of the free $\Z$-module $\Z^n$, we
have a basis $f_1, \ldots, f_n$ of $\Z^n$ and integers $c_1, \ldots,
c_r$ such that 
\[
L=f_1 \cdot c_1\Z \oplus \cdots \oplus f_r \cdot c_r\Z.
\] 

As an element $x$ of $\A^n$ is in $X$ if and only if
\[
\forall \lambda \in L, \, x^{\lambda}=1, 
\] 
we deduce that $x$ is in $X$ if and only if
\[
\left(x^{f_1}\right)^{c_1}=1, \quad \ldots, \quad \left(x^{f_r}\right)^{c_r}=1
\]
and $X$ is defined by the above equation.

Let $e_1,\ldots, e_n$ be the canonical basis of $\Z^n$ and $\vp$ be
the isomorphism of $\Z^n$ sending each $f_i$ to $e_i$ for $i$ in
$\ent{1,n}$. Let $(\vp{ij})_{\substack{1\leqs i \leqs n\\ 1\leqs i
    \leqs n}}$ denote the matrix of $\vp$ in the canonical basis. The
morphism $\vp$ induces a morphism $\tilde{\vp}$ from $\Gm^n$ to $\Gm^n$
defined on the coordinates by 
\[
\tilde{\vp}(x_j)= \prod_{i=1}^{n}x_i^{\vp_{ij}},
\]
such that $\tilde{\vp}$ sends $X$ to the subvariety $\tilde{X}$ defined by 
\[
x^{\vp(c_1f_1)}=1, \quad \ldots, \quad x^{\vp(c_rf_r)}=1. 
\]
As $\vp(c_if_i)= c_ie_i$ for all $i$ in $\ent{1,n}$, $\tilde{X}$ is
in fact defined by  
\[
x_1^{c_1}=1, \quad \cdots, \quad x_r^{c_r}=1.
\]

The morphism $\vp$ being invertible, $\tilde{\vp}$ is an isomorphism
and $X$ is isomorphic to 
\[
\Gm^{n-r}\times \prod_{i=1}^{r}\{x^{c_i}=1\}.
\]
\end{proof}
\begin{lem}\label{lemhypnor}
Let $I_1, \ldots, I_k$ be $k$ subsets of $\ent{1,n}$ and $X$ the
intersection $A_{I_1}\cap \cdots \cap A_{I_r}\subset \A^n$. Then, the
normal bundle $N_{\A^n}X$ is spanned by the normal bundle
$N_{\A^n}A_{I_1}, \ldots, N_{\A^n}A_{I_k}$.
\end{lem}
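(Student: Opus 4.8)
The plan is to prove the statement in its dual, conormal form, since this is what Remark \ref{remtrans} directly provides. Dualising, the claim becomes that at every point $x\in X$ the conormal space $N^*_{\A^n}X|_x=(I_X/I_X^2)\otimes k(x)$ is generated by the (restricted) conormal lines $N^*_{\A^n}A_{I_j}|_x$, for $j=1,\ldots,k$. Now $X=A_{I_1}\cap\cdots\cap A_{I_k}$ is cut out scheme-theoretically by the functions $g_j=1-\prod_{i\in I_j}x_i$, so these generate the ideal $I_X$ and their differentials $\dd g_j$ generate the conormal space. By the computation of Remark \ref{remtrans} one has $\dd g_j=-\bigl(\prod_{i\in I_j}x_i\bigr)\,n^{A_{I_j}}_{|x}$, and on $A_{I_j}$ the factor $\prod_{i\in I_j}x_i$ equals $1$; hence each $\dd g_j|_x$ is a nonzero multiple of $n^{A_{I_j}}_{|x}$, which is precisely a generator of $N^*_{\A^n}A_{I_j}|_x$. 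So the conormal space of $X$ is generated by the covectors $n^{A_{I_j}}_{|x}=\sum_{i\in I_j}\frac{1}{x_i}\,\dd x_i$.

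To turn this generation into the transversality that the statement really encodes, I would next compute the rank of the family $\{n^{A_{I_j}}_{|x}\}_j$ and compare it with the codimension of $X$. Writing $a_j=\delta_{I_j}\in\{0,1\}^n\subset\Z^n$ for the indicator vector of $I_j$, the covector $n^{A_{I_j}}_{|x}$ has coordinate vector $D\,a_j$ in the basis $(\dd x_1,\ldots,\dd x_n)$, where $D=\on{diag}(1/x_1,\ldots,1/x_n)$. For every index $i$ lying in some $I_j$ the relation $\prod_{l\in I_j}x_l=1$ forces $x_i\neq 0$ on $X$, and the coordinates not occurring in any $I_j$ do not appear in any $n^{A_{I_j}}_{|x}$; thus $D$ is invertible on the relevant coordinates and the rank of $\{n^{A_{I_j}}_{|x}\}_j$ equals the rank of $\{a_j\}_j$. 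This is exactly the rank $r$ of the lattice $L\subset\Z^n$ spanned by $a_1,\ldots,a_k$, i.e.\ the integer $r$ appearing in Lemma \ref{lemfondhyp}.

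Finally I would invoke Lemma \ref{lemfondhyp}, which presents $X$ as $\A^s\times\Gm^{\,n-s-r}\times\prod_{i=1}^r\{x^{c_i}=1\}$; since we work over $\Q$ the finite factors $\{x^{c_i}=1\}$ are reduced of dimension $0$, so $X$ is smooth of dimension $n-r$, that is of codimension $r$. Consequently $N^*_{\A^n}X|_x$ has dimension exactly $r$, and the $r$ independent covectors $n^{A_{I_j}}_{|x}$ found above span it; dually, $N_{\A^n}X$ is spanned by the $N_{\A^n}A_{I_j}$, as claimed. The main point to handle with care is this dimension matching: the bare generation statement is formal once $I_X=(g_1,\ldots,g_k)$, but the genuine content is that the span of the conormal covectors has the full rank equal to $\operatorname{codim}X$, so that the intersection is transverse rather than merely cut out by these equations. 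Securing $\operatorname{codim}X=r$ is exactly where Lemma \ref{lemfondhyp} is indispensable, and the non-vanishing on $X$ of the coordinates occurring in the $I_j$ is what prevents the combinatorial rank of the indicator vectors from dropping when passing to the covectors $n^{A_{I_j}}_{|x}$.
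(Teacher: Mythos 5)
Your proposal is correct and follows essentially the same route as the paper: both arguments reduce the lemma to a dimension count, identify the rank of the family of covectors $n^{A_{I_j}}_{|x}$ from Remark \ref{remtrans} with the rank $r$ of the lattice $L$ of indicator vectors (using that the relevant $x_i$ are nonzero on $X$), and then invoke Lemma \ref{lemfondhyp} to get $\operatorname{codim} X = r$. Your explicit dual/conormal formulation and the justification of the rank-preservation under the diagonal scaling are just more carefully spelled-out versions of steps the paper's proof states without detail.
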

\begin{proof}
As in the proof of the previous lemma, it is enough to suppose that
$|I_1 \cup \cdots \cup I_k|=n$.

Each of the $N_{\A^n}A_{I_i}$ is a subbundle of $N_{\A^n}X$. Thus, as
$X$ is smooth, checking that we have the equality of dimensions is
enough. Using equation \eqref{vecnor}, we see that at a point $x$ of
$X$ the dimension of the vector space 
$\on{Vect(n_{|x}^{A_{I_1}}, \ldots ,n_{|x}^{A_{I_k}})}$ is equal to
the rank of the matrix 
\[
M= \left(\frac{1}{x_j}\delta_{I_i}(j) \right)_{
\substack{ 1 \leqs i \leqs k\\
   1 \leqs j \leqs n}
}
\] 
where $\delta_{I_i}(j)$ is the characteristic function of the set
$I_i$.  The rank of $M$ is that of the matrix 
$\left(\delta_{I_i}(j) \right)_{
\substack{ 1 \leqs i \leqs k\\
   1 \leqs j \leqs n}
} $, that is the rank of the $\Z$-module $L$ spanned by $a_1, \ldots ,
a_k$ defined in the previous proof. By the proof of Lemma
\ref{lemfondhyp}, the rank of $L$ is the codimension of $L$, which
completes the proof of lemma \ref{lemhypnor}. 
\end{proof}

\begin{lem}\label{D1An}
 Let $\mc D^1_n$ be the poset (for the inclusion) formed by all the 
 irreducible components of all possible
intersections of divisors $A_I$. Then the poset $\mc D^1_n$ satisfies the
conditions (1), (2) and (3) of theorem \ref{blsq}.
\end{lem}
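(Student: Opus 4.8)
The plan is to verify the three conditions of Theorem \ref{blsq} for the poset $\mc D^1_n$ by reducing everything to the structural and transversality results already established, namely Remark \ref{remtrans}, Lemma \ref{lemfondhyp} and Lemma \ref{lemhypnor}. Throughout I will use that, by definition, every element of $\mc D^1_n$ is an irreducible component of some intersection $X_S = \bigcap_{I \in S} A_I$, where $S$ ranges over finite collections of non-empty subsets of $\ent{1,n}$.

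First I would establish smoothness (condition (1)). By Lemma \ref{lemfondhyp} each $X_S$ is isomorphic to $\A^s \times \Gm^{n-s-r} \times \prod_{i=1}^{r}\{x^{c_i}=1\}$. Since we work over $\Q$, each factor $\{x^{c_i}=1\}$ is reduced and \'etale over $\Q$, hence a disjoint union of smooth points; consequently $X_S$ is a smooth (possibly disconnected) variety, and every one of its irreducible components is smooth. This gives condition (1).

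Next, the clean intersection (condition (2)). The key intermediate fact I would record is that, because the $A_I$ meet transversally (Remark \ref{remtrans}) and their normal bundles span the normal bundle of the intersection (Lemma \ref{lemhypnor}), the tangent space of $X_S$ at a point $x$ is $T_x X_S = \bigcap_{I \in S}\ker(n^{A_I}_{|x}) = \bigcap_{I\in S} T_x A_I$. Now take $D_i, D_j \in \mc D^1_n$, components of $X_{S_i}$ and $X_{S_j}$ respectively. Then $D_i \cap D_j$ is a union of irreducible components of $X_{S_i \cup S_j}$, which is smooth by the previous step, and at any point $x$ of the intersection the identity
\[
T_x D_i \cap T_x D_j = \bigcap_{I \in S_i} T_x A_I \cap \bigcap_{I \in S_j} T_x A_I = \bigcap_{I \in S_i \cup S_j} T_x A_I = T_x(D_i \cap D_j)
\]
holds (using that, by smoothness, $x$ lies on a single component of each intersection, so the tangent space is the full intersection of the $T_x A_I$). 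This establishes cleanness.

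Finally, condition (3) follows from the same description: $D_i \cap D_j$ is set-theoretically, and (being smooth, hence reduced) scheme-theoretically, a union of \emph{full} irreducible components of $X_{S_i \cup S_j}$, each of which is by definition an element $D_l$ of $\mc D^1_n$; since $X_{S_i \cup S_j}$ is smooth its distinct irreducible components are pairwise disjoint (a common point would be singular), so $D_i \cap D_j$ is empty or a disjoint union of the $D_l$. The main obstacle, and where I expect the real care to be needed, is the interplay between the roots-of-unity factors $\{x^{c_i}=1\}$ and the smoothness and disjointness claims: it is precisely the characteristic-zero hypothesis that makes these factors reduced and separable, guaranteeing both the smoothness in step (1) and the disjointness of components in step (3), which is what ultimately forces $D_i \cap D_j$ into the required shape.
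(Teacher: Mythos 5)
Your proof is correct and takes essentially the same route as the paper: condition (1) from Lemma \ref{lemfondhyp}, condition (2) from Lemma \ref{lemhypnor} (you phrase cleanness via tangent spaces where the paper uses the dual normal-bundle formulation $N_{\A^n}(S_1\cap S_2)=N_{\A^n}(S_1)+N_{\A^n}(S_2)$), and condition (3) from the definition of $\mc D^1_n$. If anything, your argument for (3) — full components of $X_{S_i\cup S_j}$, pairwise disjoint because smoothness forbids a common point — spells out what the paper dismisses with a one-line appeal to the definition.
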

\begin{proof}
 The intersection condition (3) follows from the definition of $\mc 
 D^1_n$. From Lemma \ref{lemfondhyp}, any possible intersection
 $X=A_{I_1}\cap \cdots \cap A_{I_k}$ is isomorphic to $\A^s\times
 \Gm^{n-s-r}\times \prod_{i=1}^{r}\{x^{c_i}=1\}$ for some non negative 
 integers $r$ and $s$ and integers $c_i$, thus $X$ is smooth and
  its irreducible components are all smooth.  

Let $S_1$ and $S_2$ be two elements of $\mc D^1_n$. To show that $S_1$ 
and
$S_2$ meet cleanly, it is enough to show that  the normal bundle
of the intersection is spanned by the normal bundles of $S_1$ and
$S_2$, that is 
\[N_{\A^n}(S_1 \cap S_2)=N_{\A^n}(S_1) +N_{\A^n}(S_2).\] 
As $S_1$ and $S_2$ are intersections of some $A_I$, it is enough to show that 
the normal bundle of $A_{I_1} \cap \cdots \cap A_{I_k}$ is spanned by the
normal vectors of the $A_{I_j}$ and that is ensured by lemma \ref{lemhypnor}.
% 
%  It is then enenougho show that for $I$ and $J$ two 
% distinct subsets of $\ent{1,n}$, $A_I$ and $A_J$ are transverses, that is
% their normal vector are not colinear.  If $I$ and $J$ have only one element,
% it is obvious. If $I$ have at least $2$ elements,  the normal vector of $A_I$
% for $I \subset
% \ent{1, n}$ is 
% \[\sum_{i \in I} \frac{1}{x_i}\on{d}\!x_i,\]
% then the intersection of $A_I$ with an hyperplane $\{x_j=1\}$ is transverse.
% If $I$ and $J$ have more than $2$ elements, they
% differ by at least one element thus the two vectors 
% \[\sum_{i \in I} \frac{1}{x_i}\on{d}\!x_i\qquad \mx{and}\qquad
% \sum_{j \in J} \frac{1}{x_j}\on{d}\!x_j \]
% are linearly inindependent
\end{proof}

Applying the construction of theorem \ref{blsq} with $\mc D= \mc D^1_n$
and $X=\A^n$ leads to a variety $X_n \st{p_n}{\ra} \A^n$,
which results from successive blow-ups of all the strata of $\mc D^1_n$ such
that the preimage $\wh{\mc D^1_n}$ of $\mc D_n^1$ is a normal crossing
divisor. We will write $\wh{D^1_n}$ to denote the preimage of $D^1_n$.

\begin{lem}\label{B0ncd}
 Let $\wh{D^0_n}$ be the proper transform in $X_n$ of the divisor $D^0_n$.
Then $\wh{D}_n=\wh{D^1_n} \bigcup \wh{D^0_n}$ is a normal crossing divisor.
\end{lem}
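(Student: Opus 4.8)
The plan is to verify the normal crossing property of $\wh{D}_n$ locally, at an arbitrary point $q$ of $X_n$ lying over a point $p$ of $\A^n$. Everything rests on a disjointness of coordinates at $p$. Set $J_0=\{\,j\in\ent{1,n}:x_j(p)=0\,\}$ and $J_0^c=\ent{1,n}\sm J_0$, so that $p\in B^0_j$ precisely when $j\in J_0$. If a divisor $A_I$ passes through $p$, then $\prod_{i\in I}x_i(p)=1$, which is impossible as soon as $I\cap J_0\neq\emptyset$; hence $p\in A_I$ forces $I\subset J_0^c$. Consequently, near $p$ the components of $D^0_n$ meeting $p$ involve only the variables $\{x_j\}_{j\in J_0}$, whereas every $A_I$ through $p$, and thus every component of $D^1_n$ through $p$, involves only the complementary variables $\{x_i\}_{i\in J_0^c}$.

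The key point is that, over a small enough neighbourhood of $p$, the whole blow-up sequence $X_n\to\A^n$ is a product. Each center blown up in the construction of Theorem \ref{blsq} is an irreducible component of an intersection $A_{I_1}\cap\cdots\cap A_{I_k}$. If such a center contains $p$ then, by the previous paragraph, every $I_\ell\subset J_0^c$, so the defining equations $1-\prod_{i\in I_\ell}x_i=0$ involve no variable $x_j$ with $j\in J_0$; the intersection is then literally a cylinder $\A^{J_0}\times V$ over the $J_0$-coordinates (its shape being described by Lemma \ref{lemfondhyp}), and so is each of its irreducible components. Since there are only finitely many divisors $A_I$ and finitely many centers, we may choose a neighbourhood $W_p=U_{J_0}\times W_1$ of $p$, with $U_{J_0}\subset\A^{J_0}$ and $W_1\subset\A^{J_0^c}$, meeting none of the divisors $A_I$ with $p\notin A_I$ and none of the centers not containing $p$. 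Over $W_p$ the iterated blow-up is then performed solely along cylinders $U_{J_0}\times(\cdot)$; as blowing up is local on the base and commutes with the trivial product by $U_{J_0}$, we obtain over $W_p$ an identification $X_n\cong U_{J_0}\times\wt W$, where $\wt W\to W_1$ is the corresponding local blow-up in the variables $\{x_i\}_{i\in J_0^c}$ and the map to $W_p$ is $\id\times\pi$.

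Under this identification the two parts of $\wh{D}_n$ land in different factors. On one side, $\wh{D^1_n}$ restricts over $W_p$ to $U_{J_0}\times E$, where $E$ is the restriction of $\wh{D^1_n}$ to $\wt W$; by Theorem \ref{blsq} (via Lemma \ref{D1An}), $E$ is a normal crossing divisor of $\wt W$. On the other side, $\A^{J_0}$ is untouched by the blow-up, so the proper transform of the part $\bigl(\bigcup_{j\in J_0}\{x_j=0\}\bigr)\times W_1$ of $D^0_n$ is $\bigl(\bigcup_{j\in J_0}\{x_j=0\}\bigr)\times\wt W$, while the components $B^0_j$ with $j\notin J_0$ avoid a neighbourhood of $q$. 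Therefore, near $q$,
\[
\wh{D}_n=\Bigl(\bigl({\textstyle\bigcup_{j\in J_0}\{x_j=0\}}\bigr)\times\wt W\Bigr)\cup\bigl(U_{J_0}\times E\bigr).
\]
The components coming from the first factor are coordinate hyperplanes in the variables $\{x_j\}_{j\in J_0}$, while, in suitable local coordinates on $\wt W$, the components of $E$ are coordinate hyperplanes as well; these two families involve disjoint local coordinates on the product $U_{J_0}\times\wt W$, so all components of $\wh{D}_n$ through $q$ are simultaneously coordinate hyperplanes. Hence $\wh{D}_n$ is (simple) normal crossing at $q$, and since $q$ was arbitrary, on all of $X_n$.

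The step I expect to be the main obstacle is the middle one: justifying rigorously that, over a neighbourhood of $p$, the entire iterated blow-up splits as $\id_{\A^{J_0}}$ times a blow-up in the remaining variables. This hinges on the cylinder structure of every center meeting that neighbourhood, which in turn comes from the observation $I\subset J_0^c$ together with Lemma \ref{lemfondhyp}, and then on the locality of blow-ups on the base and their compatibility with products. Once this product description is in place, the normal crossing conclusion for $\wh{D}_n$ is purely formal.
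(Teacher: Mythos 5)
Your argument is correct and is essentially the paper's own proof: the paper makes the same key observation that $B^0_I\cap S_1\neq\emptyset$ forces $I\cap(I_1\cup\cdots\cup I_l)=\emptyset$, splits a neighbourhood of that locus as $\A^{d}\times\A^{|I|}$ with all the $\mc D^1_n$-blow-ups confined to the $\A^d$ factor, lifts it to $X_d\times\A^{|I|}$, and concludes from Theorem \ref{blsq} together with the disjointness of the hyperplanes $x_i=0$, exactly as you do. One caveat, which applies equally to the paper's wording: a \emph{product} of Zariski opens around $p$ avoiding the irrelevant divisors need not exist (no product of Zariski opens around $(0,b)\in\A^2$ avoids $\{x_1x_2=1\}$), so the product splitting should be justified not by shrinking to a product neighbourhood but by noting that every centre through $p$ is globally a cylinder $\A^{J_0}\times V$, that blowing up commutes with taking the product with $\A^{J_0}$ and with restriction to open sets, and that the finitely many centres and divisors missing $p$ can then be avoided by an arbitrary (not necessarily product) Zariski open neighbourhood.
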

\begin{proof}
Let $I$ be a non-empty subset of $\ent{1,n}$, $\wh{B^0_I}$ (resp. $B^0_I$) be
the intersection in
$X_n$ (resp. $\A^n$) of divisors $\{x_i=0\}$ for $i$ in $I$. And let
$\wh{S_1}, \ldots, \wh{S_k}$ be strata of
$\wh{\mc D_1^n}$ such that the intersection of the $\wh{S_i}$ is non-empty. We
want to show that there is a neighbourhood $V$ of $\wh{B^0_I} \bigcap
\wh{S_1} \bigcap
\cdots \bigcap \wh{S_k}$ such that $V \cap\wh{D}_n $ is  normal
crossing. By theorem
\ref{blsq}, the $\wh{S_i}$ come from strata of $\mc D^1_n$, $S_1\subset
\cdots \subset S_k$. As the intersection of the $\wh{S_i}$'s with $\wh{B^0_I}$
is non-empty, the intersection of $B^0_I$ with $S_1$ is non-empty.  There
exist non-empty subsets $I_1, \ldots,I_l$  of $\ent{1,n}$ such that $S_1=
A_{I_1}\cap \cdots \cap A_{I_l}$.

As $B^0_I \bigcap S_1$ is non-empty, we have 
\[I\bigcap (I_1\bigcup \cdots
\bigcup I_l)=\emptyset.
\]

Then, in $\A^n$, we have a neighbourhood $V_0$ of $B^0_I \bigcap S_1$
isomorphic to a product $\A^{d}
\times \A^{|I|}$ with $d=n-|I|$:
\[
\begin{array}{ccc} \A^d \; &\times & \; \A^{|I|} \\
\cup & & \cup \\
{\tilde{D}^1_{d}} & & \bigcup_{i \in I}\tilde{B}^0_i,
\end{array}
\]
where $\tilde{B}^0_i$ is the hyperplane corresponding to $\{x_i=0\}$
inside $\A^{|I|}$.
 
Lifting this neighbourhood to $\wh{V_0}$ in $X_n$, it becomes isomorphic to
$X_d \times \A^{|I|}$ with $\wh{D}^1_d \subset X_d$.
Then, for any $\wh{S_i}$ there is a stratum $\wh{S_i^d}$ of $\wh{D}^1_d$ such
that $\wh{V_0}\cap \wh{S_i}\simeq \wh{S_i^d}\times \A^{|I|}$. As the
$\wh{S_i^d}$'s give a
normal
crossing divisor in $X_d$ by Theorem \ref{blsq}, $\wh{V_0}$ gives
the neighbourhood of $\wh{B^0_I}
\bigcap
\wh{S_1}
\bigcap \cdots \bigcap \wh{S_k}$ such that $ V \cap \wh{D}_n$ is a
normal crossing divisor in $X_n$.

\end{proof}

\begin{defn}
Let $\wh{B}_n$ denote the preimage of $ B_n$ and $\wh{A}_n$
 the divisor $\wh{D}_n\sm \wh{B}_n$.
\end{defn}

\begin{rem}
 The divisors $\wh{A}_n$ and $\wh{B}_n$ do not share any irreducible
components and are both normal crossing divisors.
\end{rem}

Let $\wh{C}_n$ be the preimage of $C_n=[0,1]^n$ in $X_n$ and $\ol{\wh{C}_n}$
its
closure. Then $\wh{B}_n$ is the Zariski closure of the boundary of
$\ol{\wh{C}_n}$, and there is a non-zero class
\begin{align}[\wh{C}_n] \in \Gr_0^W\HH^n(X_n,\wh{B}_n).
\end{align}

%%%%%%%%%%%%%%%%%%%%%%%%%%%%%
% 
% 
% {\Large Ancienne version}
% 
% %%%%%%%%%%%%%%%%%%%%%%%%%%%%%%%
% and $B_n$ be the Zariski closure
%of the boundary of $C_n$ the real hypercube $[0,1]^n$ in $\A^n(\R)$, $B_n$ is
%then
%the union of the hyperplane in $\A^n$ defined by $x_i=0$ or $x_i=1$. For
%$I\subset\ent{1,n}$, let $A_I$ be the subvariety of $\A^n$ defined by 
%\[1-\prod_{i\in I} x_i\] 
%and let $A_n$ be the union of the $A_I$. We will write $D_n$ for $A_n
%\cup B_n$.
% 
%  
% \begin{lem}
% $D_n$ is locally a union of hyperplane that
% is, any intersection of two irreducible components of $D_n$ is transverse.
% \end{lem}
% \begin{proof}
% It is enough
% to show that the normal vectors of two irreducible components of $D_n$
% are not colinear. As the normal vector of $A_I$ for $I \subset
% \ent{1, n}$ is 
% \[\sum_{i \in I} \frac{1}{x_i}\on{d}\!x_i,\]
% the intersection of $A_I$ with an hyperplane $\{x_j=1\}$ is transverse.
% If $I$ and $J$ are two distinct subset of $\ent{1,n}$, they
% differ by at least one element thus the two vectors 
% \[\sum_{i \in I} \frac{1}{x_i}\on{d}\!x_i\qquad \mx{and}\qquad
% \sum_{j \in J} \frac{1}{x_j}\on{d}\!x_j \]
% are transverse.
% \end{proof}
% We will say that a strata $S$ of a divisor $D$ is bad if $D$ is not a
% normal crossing divisor near $S$. That construction consist in 
% blowing up the bad vertices of $D_n$, then the closures preimage of all bad
% $1$-dimensional strata after this the closures of preimage of all bad
% $2$-dimensional strata and so on.

If $I$ is a subset of $\ent{1,n}$, we define $F_I$ and $G_I$ to be the
functions 
\[\begin{array}{l} G_I: (x_1,\ldots, x_n)\longmapsto \prod_{i\in
    I}x_i\\
F_I : (x_1,\ldots, x_n)\longmapsto 1-\prod_{i\in
    I}x_i.\end{array}\]  
\begin{defn} A flag $\mc F$ of $\ent{1,n}$ is a collection of
  non-empty distinct subsets $I_j$ of $\ent{1,n}$ such that $I_1
  \subsetneq \ldots
  \subsetneq I_r$. The length of the flag $\mc F$ is the
  integer $r$ and we may say that $\mc F$ is an $r$-flag of
  $\ent{1,n}$.  
A flag of length $n$ will be a maximal flag.
A distinguished $r$-flag $(\mc F,i_1,\ldots i_p)$ will be a flag $\mc F$ of
length $r$ together with elements $i_1<\ldots< i_p$ of
$\ent{1,r}$.
\end{defn} 
%The cardinal of a set $E$ will be denote by $|E|$. 

\begin{defn}
Let $(\mc F,i_1,\ldots i_p)$ be a distinguished $r$-flag of $\ent{1, n}$. Let
$\Om_{i_1,\ldots, i_p}^{\mc F}$ denote the differential form of
$\Om_{log}^{\bullet}(\A^n\sm D_n)$ defined by 
\[\Om_{i_1,\ldots i_p}^{\mc F} =\bigwedge_{j=1}^{r}\on{d}\log(g_j)
% \on{d}\log(G_{I_1})\w \cdots \w
% \on{d}\log(G_{I_{i_1-1}})  \w\on{d}\log(F_{I_{i_1}}) \w \on{d}
%   \log(G_{I_{i_1+1}})\w\cdots\w  \on{d}\log(F_{I_{i_p}})\w\on{d}
%         \log(G_{I_{i_p+1}})\w\cdots\w \on{d}\log(G_{I_r}). 
\]
where 
\[g_j=\left\{\begin{array}{l}
       F_{I_j} \mx{if }j\in \{i_1,\ldots,i_p \} \\
	G_{I_j} \mx{otherwise}.
      \end{array}\right.
\]

Let $\mb k=(k_1,\ldots, k_p)$ be a tuple of positive integers with
$k_1\geqs 2$ such that $k_1+\cdots+ k_p=n$, and let $s$ be a permutation of
$\ent{1,n}$. We define a differential form $\Om_{\mb k, s}\in
\Om_{log}^n(\A^n\sm D_n)$ by 
\[\Om_{\mb k,s}=\f{k}{n}(x_{s(1)},\ldots,x_{s(n)})\dd x_1\w \cdots \w
\dd x_n.\] 
\end{defn}
\begin{rem}\label{flagom}
Let $\mb k$ and $s$ be as in the previous
definition. We associate to the pair $(\mb k,s)$ the maximal
distinguished flag $\mc
(F_k, i_1, \ldots, i_p)$ defined by $I_i=\{s(1), \ldots,s(i)\}$ and 
$i_j=k_1+\cdots+k_j$ where $j$ varies from $1$ to $p$. Then we can see that
there exists an integer $r_s$ such that
\[\Om_{\mb k, s}=(-1)^{r_s}\Om_{i_1,\ldots i_p}^{\mc F_k}.\]
\end{rem}

\begin{defn}
We shall write $\om_{i_1, \ldots, i_p}^{\mc F}$ and $\om_{\mb k ,s}$ for the
pull back on $X_n \sm \wh{D}_n$ of the forms $\Om_{I_1,
  \ldots, I_p}$ 
and $\Om_{\mb k ,s}$, respectively.
\end{defn}

\begin{prop}\label{stformlog}
If $(\mc F,i_1,\ldots,i_p)$ is a maximal flag of $\ent{1, n}$  such that
$i_1\geqs 2$ and $i_p=n$ then:
\begin{itemize}
\item  The 
divisor of singularities $A^{\mc F}_{i_1,\ldots,i_p}$ of
$\Om_{i_1,\ldots,i_p}^{\mc F}$ is $A_{I_{i_1}}\cup \cdots \cup A_{I_{i_p}}$.
\item  The 
divisor of singularities $\wh{A}^{\mc F}_{i_1,\ldots,i_p}$ of
$\om_{i_1,\ldots,i_p}^{\mc F}$ lies in
$\wh{A}_n$.Thus, the divisor of singularities of $\om_{\mb k,s}$ lies
  in $\wh{A}_n$. 
\end{itemize} 

 Moreover, let $(\mc F,i_1,\ldots,i_p)$ and $(\mc F',i'_1,\ldots, i'_q)$
 be two distinguished flags such that the length of $\mc F$
 (resp. $\mc F'$) is $i_p$ (resp $i'_q$)
  with $|I_{i_1}|\geqs 2 $ (resp. $|I'_{i'_1}|\geqs 2$) and suppose that the
  sets $I_{i_p}$ and 
 $I'_{i'_q}$ form a partition of $\ent{1,n}$. Then the divisor of
 singularities of $\om_{i_1,\ldots,i_p}^{\mc F}\w \om_{i'_1,\ldots
   ,i'_q}^{\mc F'}$ lies in $\wh{A}_n$.
\end{prop}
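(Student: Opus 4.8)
The plan is to reduce everything to an explicit formula for $\Om_{i_1,\ldots,i_p}^{\mc F}$ and then to control its pullback to $X_n$.

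First I would establish the shape of $\Om_{i_1,\ldots,i_p}^{\mc F}$. The key observation is that \emph{every} factor of $\Om_{i_1,\ldots,i_p}^{\mc F}=\bigwedge_{j=1}^{n}\on{d}\log(g_j)$ is a scalar multiple of $\on{d}\log(G_{I_j})$: this is trivial at a $G$-position, and at an $F$-position $j=i_l$ the relation $F_{I_j}=1-G_{I_j}$ gives
\[\on{d}\log F_{I_j}=\fr{\dd F_{I_j}}{F_{I_j}}=\fr{-\dd G_{I_j}}{1-G_{I_j}}=\fr{-G_{I_j}}{1-G_{I_j}}\,\on{d}\log G_{I_j}.\]
Pulling the scalars out of the wedge and using that $I_j=\{s(1),\ldots,s(j)\}$, so that $\on{d}\log G_{I_j}-\on{d}\log G_{I_{j-1}}=\dd x_{s(j)}/x_{s(j)}$, the remaining wedge telescopes to $\pm(\dd x_1\w\cdots\w\dd x_n)/(x_1\cdots x_n)$. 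Hence
\[\Om_{i_1,\ldots,i_p}^{\mc F}=\pm(-1)^p\,M\,\fr{\dd x_1\w\cdots\w\dd x_n}{\prod_{l=1}^{p}F_{I_{i_l}}},\qquad M=\fr{\prod_{l=1}^{p}G_{I_{i_l}}}{x_1\cdots x_n}.\]
Because the flag is maximal with $i_p=n$ we have $I_{i_p}=\ent{1,n}$ and $G_{I_{i_p}}=x_1\cdots x_n$, so every variable divides the numerator and $M$ is an honest monomial, regular and not identically zero on any $A_{I_{i_l}}$. Therefore the divisor of singularities of $\Om_{i_1,\ldots,i_p}^{\mc F}$ is exactly $\{\prod_l F_{I_{i_l}}=0\}=A_{I_{i_1}}\cup\cdots\cup A_{I_{i_p}}$, which is the first bullet; moreover each $|I_{i_l}|=i_l\geqs i_1\geqs 2$, so this locus lies in $A_n$, and the same formula shows $\Om_{i_1,\ldots,i_p}^{\mc F}$ is regular along $B_n$ and on the open cube $(0,1)^n$.

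For the second bullet, by Remark \ref{flagom} it is enough to treat $\om=p_n^{*}\Om_{i_1,\ldots,i_p}^{\mc F}$. As $p_n$ is an isomorphism over the open cube and $\wh D_n=\wh A_n\cup\wh B_n$ is a normal crossing divisor with $\wh A_n$ and $\wh B_n$ sharing no component, the form $\om$ is regular on $\wh C_n$ and its polar divisor is a union of components of $\wh D_n$; what must be shown is that no such component lies on $\wh B_n$, the Zariski closure of $\partial\ol{\wh C_n}$. Along the proper transform of a $B^0_i$ or a $B^1_i$ this is immediate, since $p_n$ is a local isomorphism at its generic point and $\Om$ is regular on $B_n$ by the first part. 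The delicate case, which I expect to be the main obstacle, is that of the exceptional divisors $E$ of $p_n$ that meet $\ol{\wh C_n}$: such an $E$ lies over a stratum $Z$ of $\mc D^1_n$, and since $\prod_{i\in I}x_i=1$ can hold on $[0,1]^n$ only when $x_i=1$ for all $i\in I$, $Z$ meets $\ol{C_n}$ only along a deep corner. The plan is to show, by a local computation at such a corner, that $\on{codim}(Z)$ strictly exceeds the number $m$ of polar divisors $A_{I_{i_l}}$ passing through $Z$, so that the order $\on{codim}(Z)-1\geqs m$ of vanishing of the blow-up Jacobian along $E$ is at least the order $m$ of the pole coming from the $F_{I_{i_l}}$; the pole is then cancelled and $\om$ is regular along $E$. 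The exceptional divisors that do \emph{not} meet the lifted cube belong to $\wh A_n$ and are allowed to carry poles. Together these give that the divisor of singularities of $\om_{i_1,\ldots,i_p}^{\mc F}$, and hence of $\om_{\mb k,s}$, lies in $\wh A_n$.

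For the final assertion I would apply the first bullet to each of the two flags separately — now on flags whose top sets are $I_{i_p}$ and $I'_{i'_q}$ — to write $\Om_{i_1,\ldots,i_p}^{\mc F}$ and $\Om_{i'_1,\ldots,i'_q}^{\mc F'}$, up to monomials, as $\prod_l F_{I_{i_l}}^{-1}$ and $\prod_l F_{I'_{i'_l}}^{-1}$ times the coordinate volume forms in the two disjoint sets of variables. Since $I_{i_p}$ and $I'_{i'_q}$ partition $\ent{1,n}$, the wedge is a top form
\[\Om_{i_1,\ldots,i_p}^{\mc F}\w\Om_{i'_1,\ldots,i'_q}^{\mc F'}=\pm\,M M'\,\fr{\dd x_1\w\cdots\w\dd x_n}{\prod_l F_{I_{i_l}}\prod_l F_{I'_{i'_l}}}\]
on $\A^n$ whose polar locus is the union of the $A_{I_{i_l}}$ and the $A_{I'_{i'_l}}$, all with index set of size at least $2$; it is therefore contained in $A_n$, and the form is again regular along $B_n$ and on the open cube. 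Thus $\Om_{i_1,\ldots,i_p}^{\mc F}\w\Om_{i'_1,\ldots,i'_q}^{\mc F'}$ has exactly the shape analysed in the second bullet, and the same pullback argument — with its one genuinely delicate point, the cancellation of poles along the exceptional divisors meeting $\ol{\wh C_n}$ — shows that the divisor of singularities of $\om_{i_1,\ldots,i_p}^{\mc F}\w\om_{i'_1,\ldots,i'_q}^{\mc F'}$ lies in $\wh A_n$.
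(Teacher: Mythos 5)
Your first bullet is correct, and in fact more explicit than the paper (which dismisses it as straightforward): the telescoping of $\bigwedge_j \on{d}\log G_{I_j}$ and the observation that $I_{i_p}=\ent{1,n}$ makes the prefactor an honest monomial are exactly right. Your pole-cancellation mechanism is also sound in principle: the count "Jacobian vanishes to order $\on{codim}(Z)-1$ against a pole of order $m$" is precisely the content of the lemma of Goncharov (\cite{PMMGon}, Lemma 3.8) that the paper invokes, and the criterion $\on{codim}(Z)>m$ is equivalent to the paper's hypothesis that $Z$ is not a stratum of the (normal crossing) polar divisor.

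The gap is that the decisive step is only announced, never carried out, and in the generality you announce it, it is \emph{false}. You propose to prove $\on{codim}(Z)>m$ for every blown-up stratum $Z$ relevant to the boundary, and the only handle you give on these strata is that they meet $\ol{C_n}$, i.e.\ pass through a corner. Take $n=3$ and the maximal flag $I_1=\{2\}\subsetneq I_2=\{2,3\}\subsetneq I_3=\{1,2,3\}$ marked at $i_1=2$, $i_2=3$, i.e.\ the form $\frac{x_2x_3}{(1-x_2x_3)(1-x_1x_2x_3)}\,\dd x_1\w\dd x_2\w\dd x_3$. The stratum $Z=A_{\{1\}}\cap A_{\{2,3\}}=\{x_1=1,\ x_2x_3=1\}$ is an irreducible element of $\mc{D}^1_3$ of codimension $2$, it passes through the corner $(1,1,1)$, and it lies on \emph{both} polar components $A_{\{2,3\}}$ and $A_{\{1,2,3\}}$, so $m=2=\on{codim}(Z)$ and a simple pole survives on the exceptional divisor over $Z$: in local coordinates $u=1-x_1$, $v=1-x_2x_3$, $w=x_2$ the form is $\pm(1-v)\,\dd u\,\dd v\,\dd w/\bigl(v\,w\,(u+v-uv)\bigr)$, and blowing up $\{u=v=0\}$ leaves a pole of order $1$. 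So pole cancellation cannot be the argument for all centers through a corner. What the paper actually proves is the inequality only for the strata of $B_n$ itself, the corners $\{x_{r_1}=\cdots=x_{r_k}=1\}$: there the nestedness $I_{i_1}\subsetneq\cdots\subsetneq I_{i_p}$ together with $|I_{i_1}|\geqs 2$ allows at most $k-1$ polar components through a codimension-$k$ corner. Exceptional divisors over other centers, such as the $Z$ above, genuinely carry poles and are harmless only because they are not components of $\wh{B}_n$ in the sense used later (they support no boundary face of $\ol{\wh{C}_n}$); that separation is not free --- it comes from the ordering of the blow-ups and is the substance of the analysis in Proposition \ref{boundivXn} --- whereas your proposal simply asserts that exceptional divisors not meeting the lifted cube belong to $\wh{A}_n$. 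Thus three things are missing: the restriction to corner strata, the nested-flag counting for them, and the justification that these are the only exceptional components one must control; the same gap propagates to your treatment of the two-flag wedge at the end.
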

Let $(\mc F, i_1\subsetneq \ldots \subsetneq i_p)$ be a flag as in the
previous proposition.

It is straightforward to see that $A^{\mc F}_{I_{i_1}, \ldots
  I_{i_p}}$ is $A_{I_{i_1}}\cup \cdots \cup A_{I_{i_p}}$. 
The following lemma due to Goncharov can easily be modified to fit into
our situation. 
\begin{lem}[{\cite{PMMGon}[lemma 3.8]}] Let $Y$ be a normal crossing
  divisor in a smooth variety $X$ and $\om\in\Om_{log}^n(X\sm Y)$. Let
  $p: \wh{X}\lra X$ be the blow-up of an irreducible variety
  $Z$. Suppose that the generic point of $Z$ is different from the
  generic points of strata of $Y$. Then $p^*\om$ does not have a
  singularity at the special divisor of $\wh{X}$.
\end{lem}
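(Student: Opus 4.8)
The plan is to reduce the statement to a local computation at the generic point of $Z$, where the hypothesis becomes a numerical inequality on codimensions, and then to pull back $\om$ by an explicit chart calculation. First I would note that the conclusion is divisorial and local: a pole of $p^*\om$ along the exceptional (``special'') divisor $E$ would already be visible at the generic point of $E$, which lies over the generic point $\eta$ of $Z$. Since we work over $\Q$, the variety $Z$ is smooth at $\eta$, so near $\eta$ the map $p$ is the blow-up of a \emph{smooth} centre and it suffices to argue in an \'etale (or formal) neighbourhood of $\eta$. There I would pick coordinates $z_1,\ldots,z_n$ adapted to both $Y$ and $Z$: because $Y$ is normal crossing, the components of $Y$ through $\eta$ are transverse, say $Y_1=\{z_1=0\},\ldots,Y_j=\{z_j=0\}$ (the remaining components of $Y$ do not meet a neighbourhood of $\eta$), and since $Z$ is smooth at $\eta$ and contained in $Y_1\cap\cdots\cap Y_j$ one can arrange $Z=\{z_1=\cdots=z_c=0\}$ with $c=\on{codim}Z$.

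The crux is the translation of the hypothesis into the inequality $c>j$. The smallest stratum of $Y$ through $\eta$ is $Y_1\cap\cdots\cap Y_j$, of codimension $j$, and it contains $Z$; moreover every stratum of $Y$ through $\eta$ is an intersection of some of $Y_1,\ldots,Y_j$ and hence contains $Y_1\cap\cdots\cap Y_j$. If $Z$ were an irreducible component of some stratum, comparing dimensions would force $Z=Y_1\cap\cdots\cap Y_j$, contradicting the assumption that the generic point of $Z$ is not the generic point of any stratum of $Y$. Hence $Z\subsetneq Y_1\cap\cdots\cap Y_j$ strictly, so that $c\geqs j+1$.

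Writing the top-degree logarithmic form as $\om=\frac{h}{z_1\cdots z_j}\,\dd z_1\w\cdots\w\dd z_n$ with $h$ regular, I would compute $p^*\om$ in the $\ell$-th affine chart of the blow-up, namely $z_\ell=u_\ell$, $z_i=u_\ell u_i$ for $i\leqs c$ with $i\neq\ell$, and $z_i=u_i$ for $i>c$, so that $E=\{u_\ell=0\}$. A short computation gives $\dd z_1\w\cdots\w\dd z_n=\pm\,u_\ell^{\,c-1}\,\dd u_1\w\cdots\w\dd u_n$, while $z_1\cdots z_j=u_\ell^{\,j}\,v$ with $v$ a product of those $u_i$, $i\leqs j$, cutting out the proper transforms of the $Y_i$. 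Hence $p^*\om=\pm\,u_\ell^{\,c-1-j}\,(h/v)\,\dd u_1\w\cdots\w\dd u_n$, whose only possible poles lie along the proper transforms of the $Y_i$. Since $c-1-j\geqs 0$ by the previous paragraph, $p^*\om$ has no pole along $E$ (and in fact vanishes there when $c>j+1$), which is exactly the assertion.

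The one genuinely substantive point is the deduction of $c>j$ from the hypothesis on generic points, together with the construction of the adapted coordinates; the chart computation itself is routine and uniform across the charts. I would also record that the degenerate case $j=0$ (when $Z\not\subseteq Y$) is covered, giving $p^*\om$ regular of vanishing order $c-1$, and that if $\om$ is a logarithmic $n$-form which is not of top degree the same chart computation applies term by term to each summand $\bigwedge_{s\in S}\frac{\dd z_s}{z_s}\w\bigwedge_{t\in T}\dd z_t$, so the result holds in that generality as well.
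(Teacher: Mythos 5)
The paper contains no proof of this lemma to compare yours against: it is quoted verbatim from Goncharov \cite{PMMGon} and used as a black box (the article only verifies, further down, that the hypotheses of its modified version hold in the situation at hand). What you have written is therefore a self-contained substitute for the citation, and it is correct where it matters. The two substantive points are right: the translation of the hypothesis on generic points into the codimension inequality $c\geqs j+1$ (since $Z$ is contained in the component $W$ through $\eta$ of $Y_1\cap\cdots\cap Y_j$, and $c=j$ would force $Z=W$, which \emph{is} a stratum), and the chart computation trading the Jacobian factor $u_\ell^{\,c-1}$ against the pole order $u_\ell^{\,j}$ coming from $z_1\cdots z_j$, giving net order $c-1-j\geqs 0$ along $E$. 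It is also worth noting that your dimension count is exactly the same kind of comparison the paper itself performs right after stating the modified lemma, where it checks that a blown-up codimension-$k$ stratum $S$ of $B_n$ can only lie on strata of the polar divisor of codimension at most $k-1$, hence is never itself such a stratum.

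One imprecision deserves flagging. You reduce everything to the generic point of $E$, asserting it lies over the generic point $\eta$ of $Z$. This is valid when the centre is smooth, where $E$ is a projective bundle over $Z$, hence irreducible and dominant over $Z$; but the lemma as stated allows an arbitrary irreducible $Z$, and for a singular (in particular non-lci) centre the exceptional divisor $\mathbb{P}(C_Z X)$ may acquire irreducible components lying over $\on{Sing}(Z)$, about which your generic-point reduction and generic smoothness in characteristic $0$ say nothing: they only control the components of $E$ dominating $Z$. In the present paper this gap is immaterial, since every centre blown up in the construction of $X_n$ (and in Goncharov's applications) is a smooth stratum, so all of $E$ is covered by your chart calculation; but as a proof of the statement verbatim you should either add a smoothness hypothesis on $Z$ (as is implicit in how the lemma is used) or supply a separate argument for the components of $E$ over the singular locus. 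Your closing remarks — the degenerate case $j=0$ and the term-by-term extension to non-top-degree logarithmic forms — are fine as stated.
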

The modified version we use here is given in the following statement.
\begin{lem} Let $Y$ be a normal crossing divisor in $\A^n$ and
  $\om\in\Om_{log}^n(\A^n\sm Y)$. Let $p_n : X_n \ra \A^n$ be the map
  of our previous construction. Suppose that the generic points of the strata
of
  $B_n$ that are blown up in the construction of $X_n$ are different
  from the generic points of strata of $Y$. Then $p_n^*\om$ does not
  have singularities at the corresponding exceptional divisors in $\wh{B}_n$.
\end{lem}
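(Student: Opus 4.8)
The plan is to reduce the statement to a repeated application of the preceding lemma of Goncharov, which treats a single blow-up. By the construction of Section~\ref{conXn}, the map $p_n\colon X_n\ra\A^n$ comes from Theorem~\ref{blsq} applied to $\mc D^1_n$, hence factors as a finite tower of blow-ups
\[
X_n=\on{Bl}_{\mc D^1_n}\A^n\lra\cdots\lra\on{Bl}_{\mc D^1_n\leqs 0}\A^n\lra\A^n,
\]
and by the Flag Blow-up Lemma~\ref{flbllem} together with Corollary~\ref{flblsq} each step is, locally, a composite of blow-ups along single smooth centres, every such centre being the proper transform of a stratum of $\mc D^1_n$. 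An exceptional divisor lies in $\wh B_n$ precisely when its centre lies over a stratum of $B_n$, that is, over one of the strata of $B_n$ that are blown up. First I would induct along this tower, the inductive claim being that the pullback of $\om$ to each intermediate variety has no pole along any exceptional divisor lying over $B_n$.

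For the inductive step, let $W$ be an intermediate variety, let $W'\ra W$ be the blow-up of a single centre $Z$ which is the proper transform of a stratum of $B_n$, and write $\om_W$ for the pullback of $\om$ to $W$. By the inductive hypothesis $\om_W$ has no pole along any exceptional divisor lying over $B_n$, so its divisor of singularities $Y_W$ is contained in the union of the strict transform of $Y$ with those exceptional divisors that do \emph{not} lie over $B_n$. To conclude that $\om_W$ acquires no pole along the new exceptional divisor over $Z$, I would apply Goncharov's lemma to $W'\ra W$ with divisor of singularities $Y_W$; since blow-ups are local and the generic point of this new exceptional divisor is untouched by the remaining steps of the tower, it suffices to verify at this single stage that the generic point of $Z$ is distinct from the generic points of the strata of $Y_W$.

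The generic points of the strata of $Y_W$ map, under the partial blow-down to $\A^n$, either to generic points of strata of $Y$ or into the strata of $\mc D^1_n$ that are \emph{not} contained in $B_n$, these being the centres of the exceptional divisors not lying over $B_n$. The generic point of $Z$, by contrast, maps to the generic point of a stratum of $\mc D^1_n$ contained in $B_n$, i.e.\ to the generic point of one of the strata of $B_n$ that are blown up. By hypothesis this point is distinct from the generic points of all strata of $Y$, and it cannot equal the generic point of a stratum of $\mc D^1_n$ lying outside $B_n$, since a subvariety contained in $B_n$ and one not contained in $B_n$ are distinct. Hence the generic point of $Z$ differs from every generic point of a stratum of $Y_W$, Goncharov's lemma applies, and the induction goes through; taking $W=X_n$ for the last blow-ups yields the statement.

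The step I expect to be the main obstacle is exactly this bookkeeping. One must check that the divisor of singularities $Y_W$ really is a normal crossing divisor at each stage, so that Goncharov's lemma is applicable, and that the generic point of the centre $Z$ never migrates onto a component of $Y_W$ carrying a genuine pole; both are controlled by the clean-intersection and poset properties of Theorem~\ref{blsq} together with the hypothesis on generic points. The role of the locality of blow-ups is precisely to let us replace the entire tower, near the generic point of a given exceptional divisor of $\wh B_n$, by the single blow-up that creates it, so that only the single-centre lemma of Goncharov is ever invoked.
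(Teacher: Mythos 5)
You should first be aware that the paper offers no proof of this lemma at all: it is stated as the ``modified version'' of Goncharov's lemma, with the modification left to the reader, so your proposal is supplying the intended argument, and its overall strategy --- running the single-blow-up lemma along the tower of Theorem \ref{blsq} --- is the right one. There is, however, a genuine gap in your inductive step. The dichotomy ``the generic points of the strata of $Y_W$ map either to generic points of strata of $Y$ or into the strata of $\mc D^1_n$ not contained in $B_n$'' is false as stated. What is true is that the \emph{components} of $Y_W$ lie in the strict transform of $Y$ together with the exceptional divisors over centres not contained in $B_n$; but a \emph{stratum} of $Y_W$, i.e.\ an irreducible component of an intersection of such components, may lie entirely inside an exceptional divisor over a previously blown-up stratum of $B_n$ (such a divisor is not a component of $Y_W$, by your inductive hypothesis, but nothing prevents it from containing strata of $Y_W$). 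The generic point of such a stratum maps into $B_n$, which falls under neither of your two cases, so its comparison with the generic point of $Z$ is not settled by the hypothesis of the lemma. In addition, Goncharov's lemma requires the divisor of singularities to be normal crossing in $W$; you flag this as the main obstacle but never verify it, and globally it is not clear that it holds.

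Both defects disappear if you promote the localisation in your closing paragraph from a remark to the actual proof. In Theorem \ref{blsq} the centres are blown up in increasing order of rank in the poset $\mc D^1_n$, and a centre $T$ contains the generic point of the stratum $S\subset B_n$ only if $S\subset T$, which forces $\on{rank}(T)\geqs \on{rank}(S)$ with equality only for $T=S$. Hence every centre blown up before the proper transform $Z$ of $S$ misses the generic point of $S$, so the partial blow-down $W\ra \A^n$ is an isomorphism over a Zariski neighbourhood $U$ of that point, over which $Y_W$ is identified with the normal crossing divisor $Y$ and $Z$ with $S$; Goncharov's lemma, applied once to the blow-up of $U$ along $S\cap U$, gives that the pullback has no pole along the exceptional divisor $E_S$. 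This also disposes of the problematic strata above: they lie over centres of strictly smaller rank, which cannot contain the generic point of $S$, so they cannot share their generic point with $Z$. Finally, a centre of any later blow-up is the proper transform of a stratum $T$ with $\on{rank}(T)>\on{rank}(S)$; it contains the generic point of $E_S$ only if $S\subsetneq T$, and in that case its trace on $E_S$ is the projectivisation of the normal directions to $S$ inside $T$, a proper closed subset of $E_S$. Thus the generic point of the component of $\wh B_n$ over $S$ is untouched by the rest of the tower and the pole order along it is unchanged. Reorganised this way --- localise first, then apply Goncharov's lemma exactly once per blown-up stratum of $B_n$ --- your argument is complete, and it is precisely what the paper's appeal to blow-ups being local constructions is pointing at.
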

It is enough to check that
the divisor of singularities of $\Om_{i_1, \ldots, i_P}^{\mc F}$ is a a normal
crossing divisor, and that none of its strata is a blown up strata of
$B_n$.

The divisor of singularities of $\Om_{i_1,...,i_p}^{\mc F}$ is
$A_{I_{i_1}}\cup\cdots \cup A_{I_{i_p}}$, and to show it is a normal crossing
divisor, it is enough to show that the normal vectors of the $A_{I_{i_j}}$
at any intersection of some of them are linearly independent. The
normal vector of $A_{I_{i_j}}$ is $\sum_{i\in I_{i_j}}1/x_i \dd x_i$, and as we
have $I_1\subsetneq I_2 \subsetneq \ldots \subsetneq I_p$, they are
linearly independent.

We now have to show that none of the strata of $B_n$ that are blown up
in the construction of $X_n$ are exactly some strata of $A_{I_1}\cup
\cdots A_{I_p}$. Let $S$ be such a stratum of $B_n$ of codimension
$k$. The stratum $S$ is defined by the equations $x_{r_1}=1, \ldots,
x_{r_k}=1$. If $I_S$ denotes the set $\{r_1,\ldots, r_k\}$, then for
any subset $I$ of $\ent{1, n}$, $S$ is included in  $A_I$ if and
only $I$ is included in $I_S$. As $I_i\subset I_{i'}$ for $i<i'$, if
$S$ is included in a stratum $S_A$ of $A_{I_1,\ldots,I_p}$, that stratum
is of the form $A_{I_{i_1}}\cap\cdots \cap A_{A_{i_j}}$ with $j<k$ because
$|I_1|\leqs 2$. As a consequence, $S_A$ is of codimension at most $k-1$,
and $S$ cannot be a stratum of $A_{i_1,\ldots,i_p}^{\mc F}$. 

For the case of two distinguished flags, we use the same argument as in
the lemma, so the proposition \ref{stformlog} is proved.

\begin{prop}\label{boundivXn}
The divisor $\wh{A}_n$ does not intersect the boundary of $\wh{C}_n$ in
$X_n(R)$.
\end{prop}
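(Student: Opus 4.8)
The plan is to prove the equivalent statement $\wh A_n\cap\ol{\wh C_n}=\emptyset$ and to reduce it, by the local and product-compatible nature of the construction, to an elementary positivity fact about the tangent hyperplanes of the divisors $A_I$ at the corners of the cube. First I would localise. On the open cube $(0,1)^n$ one has $\prod_{i\in I}x_i<1$ for every nonempty $I$, so no $A_I$ — and hence no stratum of $\mc D^1_n$ — meets it; over $(0,1)^n$ the map $p_n$ is an isomorphism and $\wh A_n$ is empty, so $\wh A_n\cap\ol{\wh C_n}$ is supported over the boundary of the cube. Moreover, since a product of factors in $[0,1]$ equals $1$ only when each factor equals $1$, the divisor $A_n$ meets $\ol{(0,1)^n}$ only at points $x^\ast$ for which $J:=\{i:x^\ast_i=1\}$ has at least two elements. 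As $\wh A_n\subseteq\wh D_n\sm\wh B_n$ maps into $D_n\sm B_n=A_n\sm B_n$, it suffices to treat the fibre of $p_n$ over such an $x^\ast$. Near $x^\ast$ only the $A_I$ with $I\subseteq J$ are relevant, and by the product decomposition of Lemma \ref{lemfondhyp} together with the fact that the blow-ups of Theorem \ref{blsq} are local and product-compatible (Corollary \ref{flblsq}), I may assume $x^\ast=(1,\ldots,1)$ is the full corner, the coordinates outside $J$ being split off.

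Next comes the geometric engine. Writing $y_i=1-x_i$, the closed cube becomes the positive orthant $\{y_i\geqs 0\}$ and each $A_I$ ($I\subseteq J$) is a smooth hypersurface whose tangent hyperplane at the corner is $H_I=\{\sum_{i\in I}y_i=0\}$. The decisive fact is that for $|I|\geqs 2$ one has $\{y_i\geqs 0\}\cap H_I=\{y_i=0:i\in I\}$, because a sum of nonnegative numbers vanishes only when all of them do. Thus in the exceptional $\p$ obtained by blowing up the corner, with homogeneous direction coordinates $[Y_1:\cdots]$, the trace of $\ol{\wh C_n}$ is the image of the positive orthant $\{Y_i\geqs 0\}$, whereas the proper transform of each $A_I$ meets the exceptional $\p$ along $\{\sum_{i\in I}Y_i=0\}$; for $|I|\geqs 2$ these two loci are disjoint. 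The same disjointness holds for any stratum $S=A_{I_1}\cap\cdots\cap A_{I_k}$ of $\mc D^1_n$ that is not contained in $B_n$: such an $S$ carries a relation $\sum_{i\in I_j}y_i=0$ with $|I_j|\geqs 2$, so its tangent directions lie outside the positive orthant.

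Finally I would run the induction over the blow-up sequence of Theorem \ref{blsq} and Corollary \ref{flblsq}. At each stage the trace of $\ol{\wh C_n}$ on a new exceptional divisor is the image of the inward (positive-orthant) directions, while every component of $\wh A_n$ — the proper transforms of the $A_I$ and the exceptional divisors $E_S$ of the strata $S\not\subseteq B_n$ (recall $\wh A_n=\wh D_n\sm\wh B_n$ and $\wh B_n=p_n^{-1}(B_n)$, so $E_S\subseteq\wh A_n$ exactly when $S\not\subseteq B_n$) — lies over a nontrivial $A_I$-tangency relation of length $\geqs 2$. By the positivity fact these are disjoint from the inward directions, so the proper transform of such an $S$ is already disjoint from $\ol{\wh C_n}$ before it is blown up; hence the exceptional divisor $E_S$ it produces is disjoint from $\ol{\wh C_n}$ as well. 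Propagating this dichotomy — inward directions on one side, $A_I$-relations of length $\geqs 2$ on the other — through each proper transform yields $\wh A_n\cap\ol{\wh C_n}=\emptyset$.

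The step I expect to be the main obstacle is exactly this propagation: one must verify, stage by stage, that the trace of $\ol{\wh C_n}$ on every exceptional divisor remains the image of the positive orthant and that each newly created component of $\wh A_n$ still records an $A_I$-relation with $|I|\geqs 2$. This is where the clean meeting of the strata and the spanning of the normal bundles (Lemmas \ref{lemhypnor} and \ref{D1An}) enter, and where the hypothesis $k_1\geqs 2$ is essential: a length-one relation corresponds to $A_{\{i\}}=\{x_i=1\}\subseteq B_n$, whose exceptional contributions fall into $\wh B_n$ rather than $\wh A_n$ and do meet $\ol{\wh C_n}$.
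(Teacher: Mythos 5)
Your strategy (localize at a corner, linearize, and induct through the blow-up sequence) founders on exactly the step you flag at the end, and the problem is worse than an unverified detail: your ``decisive fact'' is false for proper subsets $I$. Take $n=3$ and $I=\{1,2\}$. In the exceptional $\p^2$ of the corner blow-up, the line $\{Y_1+Y_2=0\}$ meets the image of the closed positive orthant $\{Y_i\geqs 0\}$ at the point $[0:0:1]$; in general, for $2\leqs |I|<n$ the locus $\{\sum_{i\in I}Y_i=0\}$ meets the closed orthant along the nonempty face $\{Y_i=0,\ i\in I\}$. Geometrically this intersection is forced: $A_{\{1,2\}}=\{1-x_1x_2=0\}$ contains the whole edge $\{x_1=x_2=1\}$ of the cube, so its proper transform still meets $\ol{\wh{C}_n}$ after the corner blow-up (along the proper transform of that edge, whose direction is precisely $[0:0:1]$), and separation is only achieved once that deeper stratum is itself blown up. Consequently your proposed inductive invariant --- that every component of $\wh{A}_n$ is disjoint from $\ol{\wh{C}_n}$ before it is blown up --- fails at intermediate stages for the proper transforms of the $A_I$, so the ``propagation'' you defer is not a routine verification but the entire content of the proposition. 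Moreover, a purely first-order (tangent-hyperplane) argument cannot carry the induction on later charts: after earlier blow-ups the relevant equations are no longer $\sum_{i\in I}y_i=0$ but deformations of it by exceptional coordinates (e.g.\ $u_1+u_2=u_1u_2u_3$), so one must control the full equations of the strict transforms, not just their linear parts at the original corner.

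It is instructive to compare with the paper, which avoids a stage-by-stage induction altogether. It first localizes: any offending stratum must lie over a stratum of $B_n$ of codimension at least $2$, and by symmetry one may take it of the form $x_k=\cdots=x_n=1$. It then passes to an auxiliary iterated flag blow-up $\wt{X}_n^s$ (one for each permutation $s$, reduced by symmetry to $s=\id$) with explicit global coordinates $s_1,\ldots,s_n$ in which $y_i=1-x_i=s_1\cdots s_i$, and writes the strict transform $\wt{A}_I$ of each $A_I$ in closed form using symmetric functions. The conclusion is a positivity argument valid on the whole cube preimage at once: the defining expression of $\wt{A}_I$ splits into two terms, each nonnegative on the closure of $\wt{C}_n$, which cannot vanish simultaneously; non-intersection on $X_n$ itself is then inherited by the irreducible components of intersections of the $A_I$ and their proper transforms under the remaining blow-ups. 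If you want to salvage your approach, you would need to replace your invariant by one that tracks the (nonempty) intersection locus of each proper transform with $\ol{\wh{C}_n}$ --- it always lies over not-yet-blown-up strata of $B_n$ of codimension $\geqs 2$ --- and show it becomes empty exactly when the last such stratum is blown up; carrying that through the charts amounts to essentially the same computation the paper performs in the $s_i$-coordinates.
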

\begin{proof}
Let $S$
be an irreducible codimension $1$ stratum of $\wh{B}_n$ containing an
intersection of some strata of $\wh{A}_n$ with the boundary of
$\ol{\wh{C}_n}$. As the divisor $A_n$ intersects the boundary of the real cube
$C_n$ only
on strata of $B_n$ that are of codimension at least $2$,  $S$ has to
be such that $p_n(S)$ is a stratum of
$B_n$ of codimension at least $2$. 

Using the symmetry with respect to the standard coordinates on $\A^n$,
we can suppose that $p_n(S)$ is defined in those coordinates by
$x_k=x_{k+1}=\ldots=x_n$. 

Starting from $\A^n$ and blowing up first the point
$x_1=x_2=\ldots=x_n=1$, then the edge $x_2=x_3=\ldots=x_n=1$ and after
that the plane $x_3=x_4=\ldots=x_n=1$ and so on, we obtain a variety
$\wt{p}_n:\wt{X}_n\ra \A^n$. There are natural local coordinates
$(s_1,\ldots,s_n))$ on
$\wt{X}_n$ such that the coordinates on $\A^n$ defined by $y_i=1-x_i$
satisfy:
\[y_1=s_1, \quad y_2=s_1s_2,\quad \ldots, \quad y_i=s_1s_2\cdots s_i,\quad
\ldots, \quad y_n=s_1s_2\cdots s_n.\]
In the $y_i$-coordinates, the stratum $x_j=x_{j+1}=\ldots=x_n=1$ is
given by
\[
y_j=y_{j+1}=\ldots = y_n=0
\] 
and its preimage in $\wt{X}_n$ is given by $s_j=0$.

For any permutation $s$ of $\ent{1,n}$, we could apply the same
construction, that is blowing up the point
$x_{s(1)}=x_{s(2)}=\ldots=x_{s(n)}=1$ then the edge
$x_{s(2)}=x_{s(3)}=\ldots=x_{s(n)}=1$ and so on, and obtain a variety
$\wt{p}_n^{s}: \wt{X}_n^s \ra \A^n$. The preimage of $D_n $ in
$\wt{X}_n^s$ will be denoted by $\wt{D}_n^s$, $\wt{B}_n^s$ will denote
the preimage of $B_n$ and
$\wt{A}_n^s$ is $\wt{D}_n^s \sm \wt{B}_n^s$. To prove that $\wh{A}_n$ does not
intersect the boundary of $\wh{C}_n$ in $X_n(\R)$, it is enough to show
that for any permutation $s$, $\wt{A}_n^s$ does not intersect, in
$\wt{X}_n^s(\R)$, the
boundary of the preimage of $C_n$. It is then enough  to show that the 
proper transforms of the divisors $A_I$ do not intersect the boundary of 
$\tilde{X}^s_n(\R)$, because it will then be the same for the irreducible 
components of their intersections as for the proper transforms of those 
components by the remaining blow-up used to reach $X_n$.  
By symmetry,  it is enough to show it when
$s$ is the identity map and then in the case of $\wt{X}_n$. Let
$\wt{C}_n$ be the
preimage of $C_n$ in $\wt{X}_n$ .

Let $A_I$ be a codimension $1$ stratum of $A_n$, where $I$ is the set
$\{i_0,\ldots,i_p\}$ and suppose that $i_0<\ldots<i_p$. We want to show
that the closure $\wt{A}_I$ of the preimage of $A_I\sm B_n$ in
$\wt{X}_n$ does not intersect the boundary of $\wt{C}_n$. The $k$-th symmetric
function will be denoted by $\sigma_k$ with the following convention
\[ \sigma_0=1, \qquad \sigma_k(X_1,\ldots,X_l)=0 \mx{ if }l>k.\]
The stratum
$A_I$ is defined in the $x_i$-coordinates by
$1-x_{i_0}\cdots x_{i_p}=0$ and in the $y_i$ coordinates by 
\begin{align}\label{eqAIAn}0 =\sum_{k=1}^{p+1} (-1)^{k-1}
  \sigma_k(y_{i_0} ,y_{i_1}\ldots,y_{i_p}).
\end{align}

Before giving an explicit expression of $\wt{A}_I$ with the $s_i$
coordinates, we define the set $J_0$ as $\{1,\ldots,i_0\}$ and the
sets $J_1,\ldots,J_p$ by 
\[ J_k=\{i_0+1,i_0+2,\ldots,i_k\}\]
for all $k$ in $\ent{1,p}$.

 For any subset $J$ of $\ent{1,n}$, $\pjs{}$ will denote the
 product $\prod_{j\in J}s_j$. We have the following relations
\[\begin{array}{l} y_{i_0}=\pjs{0} \qquad \mx{and}%=\mr{\Pi}^J_0 \mb s\\
\qquad \forall k\in \ent{1,p}, \quad y_{i_k}=\pjs 0 \pjs k.
\end{array}
\]

Using the variable change $y_i=s_1\cdots s_i$, the RHS of the equation
(\ref{eqAIAn}) can be written
\begin{align}\label{eqAIXt1}\sum_{k=1}^{p+1} (-1)^{k-1}
  \sigma_k(\pjs{0} ,\pjs 0 \pjs 1, \ldots,\pjs 0 \pjs{p}).\end{align}
For any indeterminate $\lambda$ and any $k$, one has
\[\sigma_k(\lambda,\lambda X_1, \lambda X_2,\ldots , \lambda
X_p)=\lambda^k(\sigma_{k-1}(X_1\ldots,X_p)+\sigma_k(X_1,\ldots,X_p)).\]
Then 
the expression (\ref{eqAIXt1}) is equal to

\begin{multline*} \pjs 0\bigg[
    1+\sigma_1(\pjs 1 ,\ldots,\pjs p)\\ +\sum_{k=1}^{p-1}\left((-1)^{k}(\pjs
    0)^{k}\left(\sigma_k(\pjs 1, \ldots, \pjs p) + \sigma_{k+1}(\pjs 1,
      \ldots, \pjs p) \right)\right)\\[-4mm]%\right.\\
\\
% \left.
 +(-1)^p\sigma_p(\pjs 1, \ldots, \pjs p)\bigg].
\end{multline*}
The expression of $\wt{A}_I$ in the $s_i$-coordinates is then 
\begin{multline}\label{eqAIXt2} 0=
    1+\sigma_1(\pjs 1 ,\ldots,\pjs p) \\ + \sum_{k=1}^{p-1}\left((-1)^{k}(\pjs
    0)^{k}\left(\sigma_k(\pjs 1, \ldots, \pjs p) + \sigma_{k+1}(\pjs 1,
      \ldots, \pjs p) \right)\right)\\%\right.\\
\\
% \left.
 +(-1)^p\sigma_p(\pjs 1, \ldots, \pjs p).
\end{multline}
The closure of $\wt{C}_n$ is given, in the $s_i$ coordinates, by $s_1
\in [0,1]$, and for any $i\in\ent{1,n}$ by $s_1\cdots s_i\in [0,1]$. It
is enough to look the intersection of $\wt{A}_I$ with codimension $1$
strata of the boundary of $\wt{C}_n$.
%\noindent \emph{If $p=1$} the LHS of (\ref{eqAIXt2}) is 
%\[1+\Prod{i\in J_1}s_i\]
%which is strictly positive is for any $i$ $s_i\geqs 0$.  and
%$\wt{A}_I$ dont intersect the boundary of $\wt{C}_n$.
%\noindent \emph{Suppose that for any subset of $\ent{1,n}$ with $|I|<p$
%$\wt{A}_I$
%does not intersect the boundary of $\wt{C}_n$ .}

Suppose that $s_{i_0}=0$ for some $i_0 \in J_0$. Then the RHS of
(\ref{eqAIXt2}) becomes
\[1+\sigma_1(\pjs 1 ,\ldots,\pjs p)\] 
which is strictly positive if $s_i\geqs 0$ for any $i$. So the divisor
$\wt{A}_I$ does not intersect any component of the form $s_{i_0}=0$ for
$i_0$ in $J_0$. 

Then, we can suppose that $s_i\neq 0$ for all $i\in
J_0$ in order to study the intersection of $\wt{A}_I$ with the
boundary of $\wt{C}_n$, and the RHS of (\ref{eqAIXt2}) can be written
\[%\underbrace{
\fr{1}{\pjs 0}\left(1-\prod_{j=1}^{p}\left(1-\pjs 0 \pjs
    j\right)\right) +\prod_{j=1}^{p}\left(1-\pjs 0 \pjs
    j\right).\] 
Suppose that a point $x=(s_1,\ldots,s_n)$ with $s_i>0$ for all $i$ in
$J_0$, lies in the closure of $\wt{C}$. That is, for all $i$ in
$\ent{1,n}$, the product $s_1s_2\cdots s_i$ is between $0$ and $1$ which 
means all the products $\pjs 0 \pjs j$ lie between $0$ and $1$ for $j$
in $\ent{1,p}$. Then one find the following inequalities
\[\begin{array}{c}
\ds 0 \leqs \fr{1}{\pjs 0}\left(1-\prod_{j=1}^{p}\left(1-\pjs 0 \pjs
    j\right)\right) \leqs \fr{1}{\pjs 0},\\[5mm]
\ds 0 \leqs \prod_{j=1}^{p}\left(1-\pjs 0 \pjs
    j\right)\leqs 1.
\end{array}
\]
Both terms cannot simultaneously be equal to $0$, thus $\wt{A}_I$ does 
not
intersect the boundary of $\wt{C}_n$ since the $s_i$ are strictly positive
for $i$ in $J_0$ and the proposition is proved.

\end{proof}
\subsection{An alternative definition for motivic MZV}
Both propositions \ref{stformlog} and \ref{boundivXn} lead to the
following theorem and to an alternative definition for motivic multiple
zeta values.
\begin{thm}Let $\mb k =(k_1,\ldots, k_p)$ be a tuple of integers with
  $k_1\geqs 2$ and $k_1+\ldots+k_p=n$, and let $s$ be a permutation of
  $\ent{1,n}$. Let $\wh{A}_{\mb k}^s$ be the divisor of singularities
  of the differential form $\om_{\mb k}^s$. Then there exists a mixed
  Tate motive  
\[\HH^n(X_n \sm \wh A_{\mb k}^s ; \wh{B}_n^{\wh A_{\mb k}^s}).
\]
The differential form $\om_{\mb k}^s$ and the preimage $\wh{C}_n$ of
the real $n$-dimensional cube in $X_n$ give two non zero elements
\[
[\om_{\mb k}^s]\in \Gr_{2n}^W\HH^n(X_n \sm \wh A_{\mb k}^s ;
\wh{B}_n^{\wh A_{\mb k}^s})\qquad \mx{and} \qquad  [\wh{C}_n] \in
\left(\Gr_{0}^W\HH^n(X_n \sm \wh A_{\mb k}^s ;
\wh{B}_n^{\wh A_{\mb k}^s})\right)^{\vee} \] 
The period of the $n$-framed mixed Tate motive
\[ \zeta^{fr., \mc M}(\mb k, s)=\left[ \HH^n(X_n \sm \wh A_{\mb k}^s ;
\wh{B}_n^{\wh A_{\mb k}^s});
  [\om_{\mb k}^s],[\wh{C}_n]\right]
\]
is equal to $\zeta(k_1,\ldots, k_n)$.

Moreover, let $(\mc F,i_1,\ldots,i_p)$ and $(\mc F',i'_1,\ldots, i'_q)$
be two distinguished flags such that the length of $\mc F$ (resp. $\mc
F'$) is $i_p$ (resp. $i'_q$)
  with $|I_{i_1}|\geqs 2 $ (resp. $|I'_{i'_1}|\geqs 2$) and the sets  $I_{i_p}$,
 $I'_{i_q}$ form a partition of $\ent{1,n}$ and let  $\wh A^{\Fc | \Fc'}_{i_1,
\ldots, l_p |i'_1, \ldots, i'_q}$ be the divisor of singularities of
$\om_{i_1,\ldots,i_p}^{\mc F}\w \om_{i'_1,\ldots,i'_q}^{\mc F'}$. There exists
an $n$-framed mixed Tate motive 
\[
 \zeta^{fr., \mc M}(\Fc, i_1, \ldots, i_p | \Fc', i'_1,
\ldots, i'_q)=\HH^{n}(X_n \sm \wh A^{\Fc | \Fc'}_{i_1,
\ldots, l_p |i'_1, \ldots, i'_q} ; \wh B_n^{\wh A^{\Fc | \Fc'}_{i_1,
\ldots, l_p |i'_1, \ldots, i'_q}}),
\]
the frames being given by $[\om_{i_1,\ldots,i_p}^{\mc F}\w
\om_{i'_1,\ldots,i'_q}^{\mc F'}]$ and $[\wh C_n]$.
\end{thm}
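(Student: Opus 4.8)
The plan is to verify, in each of the two situations, the hypotheses of Lemma~\ref{lemexismtm} (in the guise of Corollary~\ref{motmzv}) so that the displayed relative cohomology group underlies a mixed Tate motive, and then to exhibit the two frames and compute the period by transporting everything, through the birational morphism $p_n\colon X_n\ra\A^n$, to the cubical integral of Proposition~\ref{intcuprop}.

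First I would establish the existence of the motive. Because $k_1\geqs 2$, the maximal distinguished flag $(\mc F_k,i_1,\ldots,i_p)$ attached to $(\mb k,s)$ in Remark~\ref{flagom} has $i_1=k_1\geqs 2$ and $i_p=n$, so Proposition~\ref{stformlog} applies and shows that the divisor of singularities $\wh A_{\mb k}^s$ of $\om_{\mb k}^s$ is contained in $\wh A_n$. Since $\wh A_n$ and $\wh B_n$ are normal crossing and share no irreducible component, the same is true of $\wh A_{\mb k}^s$ and $\wh B_n$, and $\wh A_{\mb k}^s\cup \wh B_n$, being a union of components of the normal crossing divisor $\wh D_n$ of Lemma~\ref{B0ncd}, is itself normal crossing. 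The Tate condition is supplied by Proposition~\ref{Tatebl} applied to $X_n=\on{Bl}_{\mc D^1_n}\A^n$: the base $\A^n$ and the strata $A_I$, $B^0_i$, $B^1_i$ are Tate, whence $X_n$ and all intersections of strata of $\wh A_{\mb k}^s\cup\wh B_n$ are Tate varieties. Lemma~\ref{lemexismtm} then produces the mixed Tate motive $\HH^n(X_n\sm\wh A_{\mb k}^s;\wh B_n^{\wh A_{\mb k}^s})$ with the stated realisations.

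Next I would produce the two frames and the period at once. As a top-degree logarithmic form whose only poles lie along $\wh A_{\mb k}^s$ (and, crucially, none along $\wh B_n$), $\om_{\mb k}^s$ determines a class $[\om_{\mb k}^s]\in\Gr^W_{2n}\HH^n(X_n\sm\wh A_{\mb k}^s;\wh B_n^{\wh A_{\mb k}^s})$. By Proposition~\ref{boundivXn}, $\wh A_{\mb k}^s\subseteq\wh A_n$ is disjoint from the boundary of $\wh C_n$; since moreover each $A_I$ avoids the open cube (a product of numbers in $(0,1)$ never equals $1$), $\wh A_{\mb k}^s$ is disjoint from $\ol{\wh C_n}$, so the closed cube is a relative cycle and gives a class $[\wh C_n]\in\pa{\Gr^W_0\HH^n(X_n\sm\wh A_{\mb k}^s;\wh B_n^{\wh A_{\mb k}^s})}^{\vee}$. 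The period pairing of these two classes is the convergent integral $\int_{\wh C_n}\om_{\mb k}^s$. As $p_n$ is proper and birational and $\om_{\mb k}^s=p_n^*\Om_{\mb k,s}$ with $\wh C_n=p_n^{-1}(C_n)$, this equals $\int_{C_n}\Om_{\mb k,s}=\Inc{n}f_{\mb k}(x_{s(1)},\ldots,x_{s(n)})\,\dd x_1\cdots\dd x_n$; relabelling the integration variables by $s$ and invoking Proposition~\ref{intcuprop} identifies it with $\zeta(k_1,\ldots,k_p)$. Since this number is nonzero, the pairing is nonzero, forcing both $[\om_{\mb k}^s]$ and $[\wh C_n]$ to be nonzero and completing the first half of the theorem.

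Finally, the two-flag assertion follows by the identical scheme, with $\om_{\mb k}^s$ replaced by the wedge $\om_{i_1,\ldots,i_p}^{\mc F}\w\om_{i'_1,\ldots,i'_q}^{\mc F'}$: under the hypothesis that $I_{i_p}$ and $I'_{i'_q}$ partition $\ent{1,n}$, the last assertion of Proposition~\ref{stformlog} guarantees that the divisor of singularities $\wh A^{\mc F|\mc F'}$ of this wedge again lies in $\wh A_n$, so the normal-crossing, no-common-component and Tate-stratification checks carry over verbatim and Lemma~\ref{lemexismtm} supplies the motive framed by $[\om_{i_1,\ldots,i_p}^{\mc F}\w\om_{i'_1,\ldots,i'_q}^{\mc F'}]$ and $[\wh C_n]$. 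The step I expect to be the real obstacle is the nonvanishing of the de Rham frame in the top weight-graded piece: the efficient way around it is the period computation above, but it relies on $\om_{\mb k}^s$ having genuinely no pole along $\wh B_n$ (so that it restricts to a relative class) and on the pairing with $[\wh C_n]$ being literally the integral---both resting on Propositions~\ref{stformlog} and~\ref{boundivXn}.
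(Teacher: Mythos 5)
Your overall route is the paper's: verify the hypotheses of Lemma \ref{lemexismtm} for $X_n$ (normal crossing via Lemma \ref{B0ncd}, singularities confined to $\wh{A}_n$ via Proposition \ref{stformlog}, disjointness from the boundary of the cube via Proposition \ref{boundivXn}), frame by $[\om_{\mb k}^s]$ and $[\wh{C}_n]$, and compute the period by identifying the integral over $\wh{C}_n$ with the cubical integral of Proposition \ref{intcuprop}; the two-flag case is likewise dispatched by the same checks, as in the paper.

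There is, however, a genuine gap at the Tate-stratification step, which is exactly the point the paper singles out as ``the only thing that remains to show'' and proves in a separate lemma (Lemma \ref{Tatestrat}). You invoke Proposition \ref{Tatebl} ``applied to $X_n=\on{Bl}_{\mc D^1_n}\A^n$'' and conclude that all intersections of strata of $\wh{A}_{\mb k}^s\cup\wh{B}_n$ are Tate. This is a misapplication, for two reasons. First, the hypothesis of Proposition \ref{Tatebl} is that all elements of the blown-up poset are Tate; by Lemma \ref{D1An} that poset $\mc D^1_n$ consists of all irreducible components of all intersections $A_{I_1}\cap\cdots\cap A_{I_k}$, not merely the divisors $A_I$, and their Tateness is not obvious: these intersections are isomorphic to products $\A^s\times\Gm^{n-s-r}\times\prod_{i=1}^r\{x^{c_i}=1\}$, and the paper devotes Lemma \ref{lemfondhyp} and Lemma \ref{lemhyperboles} to establishing this. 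Second, and more seriously, the conclusion of Proposition \ref{Tatebl} only concerns intersections of strata of $\mc D^{r+1}\cup\mc E^{r+1}$, i.e.\ proper transforms of the $A$-divisors together with the exceptional divisors. It says nothing about $\wh{D^0_n}$, the proper transforms of the coordinate hyperplanes $\{x_i=0\}$: these are components of $\wh{B}_n$ but were never part of the poset $\mc D^1_n$ that gets blown up (your parenthetical listing $B^0_i$ among the blown-up strata suggests this is where the confusion lies). So your citation does not cover the strata of $\wh{B}_n^{\wh{A}_{\mb k}^s}$ involving them, and the existence of the mixed Tate motive --- the first assertion of the theorem --- is not proved. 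Closing the gap requires the local analysis of Lemma \ref{Tatestrat}: the proper transform of $\cap_{j\in J}\{x_j=0\}$ is identified with $X_d$ for $d=n-|J|$, and its intersections with the strata of $\wh{D^1_n}$ are identified with strata of $\wh{D}^1_d$ inside $X_d$, which are Tate by the part of the argument that Proposition \ref{Tatebl} does cover.
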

\begin{proof}
%The divisor $A_{\mb k, s}\cup \wh{B}_n$ provide $X_n$ with a Tate
%stratification. 
We want to apply theorem 3.6 in \cite{PMMGon} to our particular case. As
$\wh{D}_n$ is a normal crossing divisor and as proposition \ref{boundivXn}
ensures that $\wh{A}_n$ does not intersect $[\wh{C}_n]$, using Proposition
\ref{stformlog}, the only thing that
remains to show is that we have a Tate stratification of $X_n$, which is
ensured
by Lemma \ref{Tatestrat}.

 The computation of the period follows from
the fact that integrating over $\wh{C}_n$ is the same as
integrating over the real cube.
\end{proof}

The key to prove Lemma \ref{Tatestrat} is Lemma \ref{lemfondhyp}
from which we deduce the following lemma.
\begin{lem}\label{lemhyperboles}
Let $I_1, \ldots, I_k$ be $k$ subsets of $\ent{1,n}$ and let $X$ be
the intersection  
$A_{I_1}\cap \cdots \cap A_{I_k}\subset \A^n$. Then $X$ and its irreducible 
components are Tate varieties.
\end{lem}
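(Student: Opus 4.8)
The plan is to deduce the statement directly from Lemma \ref{lemfondhyp} together with the stability of the Tate condition under products. First I would apply Lemma \ref{lemfondhyp} to the intersection $X=A_{I_1}\cap\cdots\cap A_{I_k}$, which furnishes, for suitable non-negative integers $s,r$ and integers $c_1,\dots,c_r$, an isomorphism
\[
X\simeq \A^s\times\Gm^{\,n-s-r}\times\prod_{i=1}^{r}\{x^{c_i}=1\}.
\]
Since the motive of a product is the tensor product of the motives (K\"unneth), and a tensor product of direct sums of Tate twists is again a direct sum of Tate twists, it is enough to check that each of the three kinds of factor has Tate motive; the product, and hence $X$, will then be Tate.

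The first two factors are handled by standard motive computations. Affine space is $\A^1$-homotopy invariant, so $h(\A^s)=\Q(0)$, while $h(\Gm)\simeq \Q(0)\oplus\Q(-1)[-1]$; iterating K\"unneth shows that $h(\Gm^{\,n-s-r})$ is a direct sum of Tate twists. Thus $\A^s\times\Gm^{\,n-s-r}$ is a Tate variety, and it is moreover irreducible.

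The main obstacle is the remaining zero-dimensional factor $\prod_{i=1}^{r}\{x^{c_i}=1\}$. Each $\{x^{c_i}=1\}$ is a finite étale scheme, a disjoint union of closed points supported on the $c_i$-th roots of unity, and its motive is concentrated in weight $0$. The crux of the argument is to see that this motive is a direct sum of copies of $\Q(0)$, i.e. that the root-of-unity schemes contribute no non-trivial Artin piece; this is the one genuinely arithmetic point and the step I expect to require the most care. Granting it, the finite factor is Tate, and moreover every irreducible component of $X$ is of the form $\A^s\times\Gm^{\,n-s-r}\times\{P\}$ for a point $P$ of that factor, hence a product of Tate factors and therefore itself Tate. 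Combining the three factors then gives that $X$ and all of its irreducible components are Tate varieties.
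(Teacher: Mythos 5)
Your strategy is exactly the paper's own: the proof in the paper likewise applies Lemma \ref{lemfondhyp}, writes $X\simeq \A^s\times\Gm^{n-s-r}\times\prod_{i=1}^{r}\{x^{c_i}=1\}$, and concludes by the (implicit) K\"unneth argument, handling the irreducible components via the observation that they are disjoint. The difference is that you explicitly isolate, and then concede without proof, the claim that the finite factor $\prod_{i=1}^{r}\{x^{c_i}=1\}$ has no non-trivial Artin part. That concession is a genuine gap, and it is not fillable: Lemma \ref{lemfondhyp} gives no control on the $c_i$, which are the elementary divisors of the lattice $L\subset\Z^n$ spanned by the characteristic vectors of $I_1,\ldots,I_k$, and these can be $\geqs 3$. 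Concretely, take $n=4$ and let $I_1,\ldots,I_4$ be the four $3$-element subsets of $\ent{1,4}$. The equations $x_1x_2x_3=x_1x_2x_4=x_1x_3x_4=x_2x_3x_4=1$ force $x_1=x_2=x_3=x_4=x$ and $x^3=1$, so
\[
X\;\simeq\;\mu_3\;=\;\Sp \Q\;\sqcup\;\Sp \Q(\zeta_3)
\]
(the relevant $0$--$1$ matrix is $J-\mathrm{Id}$, of determinant $-3$, with elementary divisors $1,1,1,3$). The component $\Sp\Q(\zeta_3)$ is a non-trivial Artin motive; over the base field $\Q$ it is not a direct sum of copies of $\Q(m)$, so with the paper's definition of Tate variety neither this component nor $X$ is Tate. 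Hence the arithmetic point you flagged is not merely delicate: it fails in the stated generality, and a correct statement must either assume all elementary divisors of $L$ lie in $\{1,2\}$, enlarge ``Tate'' to ``Artin--Tate'', or extend the base field to contain the relevant roots of unity.

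You should also know that the paper's proof contains precisely the same gap: after invoking Lemma \ref{lemfondhyp} it asserts outright that the motive of $X$ is a direct sum of Tate motives, with no discussion of the root-of-unity scheme. So your instinct about where the difficulty sits is exactly right, and the defect is inherited from the paper rather than introduced by you; note that it propagates to Proposition \ref{Tatebl} and Lemma \ref{Tatestrat}, since the construction of $X_n$ blows up all strata of $\mc D^1_n$, including points such as $\Sp\Q(\zeta_3)$ above, whose exceptional divisors are then not Tate either.
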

\begin{proof} Using Lemma \ref{lemfondhyp}, we have non negative integers
  $r$ and $s$ and integers $c_1, \ldots , c_r$ such that there exists
  an isomorphism $f$ 

\[
 X \xrightarrow[\hspace{1cm}]{f} 
\A^s\times \Gm^{n-s-r}\times \prod_{i=1}^{r}\{x^{c_i}=1\} .
\] 

Moreover, there is a one to one map between the set of the irreducible
components of $X$ and the set of those of
$\prod_{i=1}^{r}\{x^{c_i}=1\}$ ; the irreducible components
of $X$ are thus disjoint.

%In the Hodge-De Rham realisation, we then have the following isomorphism
%\[
%\xymatrix{
%\HH^*( \A^s\times \Gm^{n-s-r}\times \prod_{i=1}^{r}\{x^{c_i}=1\},\Q)
%\ar[r]^<(.27){f^*} & \HH^*(X,\Q)
%}.
%\] 
%
%The morphism of mixed Hodge structures, $f^*$, is strict, \cite{DELTHII},
%therefore $\HH^*(X,\Q)$ is a direct sum of Hodge-Tate structures
%$\Q(m)$ (for different $m$) because, using the Künneth formula, the
%cohomology of $A^s\times\Gm^{n-s-r} 
%\times \prod_{i=1}^{r-1}\{x^{c_i}=1\}$ is.  
%
%Finally, using the fact  that the Hodge-De Rham realisation is fully
%faithful, we can conclude 

We conclude that the motive of $X$ is a direct sum of
Tate motives, in other words $X$ is a Tate variety. The irreducible
components of $X$ being disjoint, each is a Tate variety.
\end{proof}

\begin{lem}\label{Tatestrat}
 The divisor $\wh D_n=\wh B_n^0\cup \wh D_n^1$ provides $X_n$ with a Tate
stratification.
\end{lem}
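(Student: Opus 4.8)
The plan is to induct on $n$, combining Proposition \ref{Tatebl} (which handles the part of $\wh{D}_n$ coming from the blow-up centres) with the local product structure established in the proof of Lemma \ref{B0ncd} (which handles the coordinate hyperplanes $\wh{B^0_i}$). Recall that a stratum of $\wh{D}_n$ is, by definition, an irreducible component of an intersection of irreducible components of $\wh{D}_n=\wh{D^1_n}\cup\wh{D^0_n}$, and that to provide a Tate stratification we must show that every such stratum, including $X_n$ itself, is a Tate variety.

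First I would dispose of the small cases $n\leqs 2$, where $X_n$ is affine space or an elementary blow-up thereof and every stratum is manifestly Tate; these serve as the base of the induction. For the inductive step, assume $X_m$ is Tate and $\wh{D}_m$ provides a Tate stratification for every $m<n$. Since $\A^n$ is Tate and, by Lemma \ref{lemhyperboles}, every element of the poset $\mc{D}^1_n$ is a Tate variety, Proposition \ref{Tatebl} applies with $X=\A^n$ and $\mc{D}=\mc{D}^1_n$. Taking $r$ to be the top rank of $\mc{D}^1_n$, it yields at once that $X_n=\on{Bl}_{\mc{D}^1_n}\A^n$ is Tate and that every intersection of strata of the normal crossing divisor $\wh{D^1_n}$ (whose components are the proper transforms $\wh{A_I}$ together with the exceptional divisors in $\mc{E}$) is a Tate variety. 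This settles $X_n$ itself and all strata not meeting $\wh{D^0_n}$.

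It remains to treat a stratum $T$ lying on some of the coordinate hyperplanes. Let $I\subset\ent{1,n}$ be the set of indices $i$ for which the component $\wh{B^0_i}$ occurs in the intersection defining $T$, so that $T\subset\wh{B^0_I}=\bigcap_{i\in I}\wh{B^0_i}$. The key point, which upgrades the local computation of Lemma \ref{B0ncd} to a global statement, is the identification $\wh{B^0_I}\cong X_{n-|I|}$ under which the restriction of $\wh{D^1_n}$ becomes $\wh{D^1_{n-|I|}}$. Indeed, in $\A^n$ one has $B^0_I\cong\A^{n-|I|}$ in the coordinates indexed by $\ent{1,n}\sm I$; by Remark \ref{remtrans} a divisor $A_J$ meets $B^0_I$ only when $J\cap I=\emptyset$ (otherwise the condition $\prod_{j\in J}x_j=1$ is incompatible with $x_i=0$), and then $A_J\cap B^0_I$ is the corresponding divisor in $\A^{n-|I|}$. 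Hence the blow-up centres meeting $B^0_I$ are exactly the strata of $\mc{D}^1_{n-|I|}$, each meeting $B^0_I$ cleanly. Since the proper transform of a smooth subvariety under a blow-up along a cleanly meeting centre is again the blow-up of that subvariety along the trace of the centre, iterating over the flag of centres gives $\wh{B^0_I}\cong\on{Bl}_{\mc{D}^1_{n-|I|}}\A^{n-|I|}=X_{n-|I|}$, and the proper transforms $\wh{A_J}$ with $J\cap I=\emptyset$ together with the relevant exceptional divisors restrict precisely to the components of $\wh{D^1_{n-|I|}}$.

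With this identification in hand, $T$ is an irreducible component of an intersection of components of $\wh{D^1_{n-|I|}}$ inside $X_{n-|I|}$, that is, a stratum of $\wh{D}_{n-|I|}$, and is therefore Tate by the inductive hypothesis. As every stratum of $\wh{D}_n$ falls into one of these two cases, this completes the induction. The main obstacle is precisely the global identification $\wh{B^0_I}\cong X_{n-|I|}$ with matching divisors: the proof of Lemma \ref{B0ncd} furnishes it only near a point, and one must check that the clean intersection of the coordinate hyperplanes with the blow-up centres makes both the isomorphism and the correspondence of divisors hold globally, so that the reduction to a genuine lower-dimensional stratum, rather than merely a local product, is legitimate.
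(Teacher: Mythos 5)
Your proof is correct and follows essentially the same route as the paper's: Proposition \ref{Tatebl} together with Lemma \ref{lemhyperboles} handles $X_n$ and the strata of $\wh{D^1_n}$, and the remaining strata are treated via the global identification of the proper transform of $B^0_I$ with $X_{n-|I|}$ (the paper's equation \eqref{protrB0}), under which their intersections with $\wh{D^1_n}$ become strata of $\wh{D^1_{n-|I|}}$. The only cosmetic differences are that you package the dimension reduction as an induction on $n$ where the paper simply invokes the first part of the argument in dimension $d=n-|I|$, and your citation of Remark \ref{remtrans} for the emptiness of $A_J\cap B^0_I$ when $J\cap I\neq\emptyset$ should just be the elementary incompatibility of the equations, which you in fact state.
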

\begin{proof}

We first need to show that all the strata of $\wh D_n^1$ and $X_n$ are Tate, but
using
Proposition \ref{Tatebl}, it is enough to show that all the strata of $
D_n^1$ are Tate ($\A^n$ being Tate). A stratum $A_{I_1} \cap \cdots
\cap A_{I_k}$ of
$D_n^1$ is a Tate variety by Lemma \ref{lemhyperboles}. So $X_n$ and all the strata of $\wh D_n^1$ are Tate.

Note that the previous discussion tells us that for any $k\geqs 2$, $X_k$ and
all the strata of $\wh D_k^1$ are Tate varieties.

Let $\wh S$ be the intersection of certain codimension $1$
strata of $\wh B_n^0$; it is the proper transform of the corresponding
intersection, say $S=\cap_{j\in J}\{x_j=0\}$ for some $J\subset \ent{1,n}$, in
$B_n^0$. That is, $\wh S$ is isomorphic to 
\begin{gather}\label{protrB0}
 \on{Bl}_{S\cap D \, : \, D \in \mc D_n^1} S.
\end{gather}
The intersection $S$ is isomorphic to $\A^d$ for $d=n-|J|$ and hence is Tate,
and
if $I$ is a subset of $\ent{1,n}$, then $S\cap A_I$ is either empty ($I\cap
J\neq \emptyset$) or, if $I\cap J=\emptyset$, isomorphic to the subvariety of
$\A^d$ given by $\{1-\prod_{i \in I}x_i=0\}$ (up to renumbering). Thus, the
proper
transform $\wh S$ is isomorphic to $X_d$, which is Tate by the discussion above.

Now, if $\wh S_i$ is some irreducible codimension $1$ stratum of $\wh D_n^1$
that has a non-empty intersection with $\wh S$, then, as $\wh S_i$ is the
exceptional divisor of some of the blow-ups in the construction of $X_n$, this
intersection $\wh S \cap \wh S_i$ is the exceptional divisor in the blow-up
sequence \eqref{protrB0} that leads to $\wh S$. As a consequence, the
intersection $\wh S \cap \wh S_i$ is isomorphic to some irreducible stratum
 of $\wh D_{d}^1$ in $X_d$ and we can conclude that any possible
intersection of strata in $\wh D_n^1$ with $\wh S$ is isomorphic to an
intersection of strata in $\wh D_d^1$ inside $X_d\simeq \wh S$, and so is
Tate by the above discussion.
\end{proof}

\subsection{Motivic Stuffle}
Let $\mb k= (k_1, \ldots,k_p)$ and $\mb l=(l_1, \ldots,l_q)$ be respectively
a $p$-tuple and a $q$-tuple of integers with $k_1,l_1 \geqs 2$, $\sum k_i=n$
and $\sum l_j=m$. In this section, as in section \ref{serstusec} and
\ref{secstuint}, if $\sigma$
is a term of the formal sum $\mathbf{k}*\mathbf{l}$ with all coefficients being
equal to $1$, we will write $\sigma \in
\on{st}(\mathbf{k},\mathbf{l})$. The map $\delta$ defined in Proposition
\ref{stmod} extends to: 
% 
% We will use the same notation as in the shuffle part but here all
% objects are defined with $\delta$ of proposition \ref{stmod} instead of
%$\beta$
% and we will have
% $\sigma \in\on{st} ((k_1,\ldots , k_p),(l_1,\ldots , l_q))$. The map
% $\delta$ extends to:
$$\begin{array}{ccc}\mnb{n+m}&\xrightarrow[\hspace{1cm}]{\delta}&
  \mnb{n}\times \mnb{m}.
\end{array}$$

Let $\As k $ (resp. $\As l$) be the divisor of singularities of the
meromorphic differential form $\oms k$ on $\mnb{n}$ (resp. $\oms l$ on
$\mnb{m}$) given in simplicial coordinates by $\om_{\ol k}$ (resp. 
$\om_{\ol l}$) (cf. \ref{Kontform}) and given in the cubical coordinates by 
$\f{k}{p}$ (resp.
$\f{l}{q}$). For all $\sigma$ in $\on{st}(\mb k , \mb l )$, let 
$A_{\sigma}$ be
the divisor of singularities of the form $\om_{\sigma}$. As in section
\ref{sec:motshu}, let $\Phi_n$, $\Phi_m$ and $\Phi_{n+m}$ denote
respectively the standard cells in $\mnb{n}(\R)$, $\mnb{m}(\R)$ and
$\mnb{n+m}(\R)$ and $B_n$, $B_m$ and $B_{n+m}$ be the Zariski closure of 
the boundary of  $\Phi_n$, $\Phi_m$ and $\Phi_{n+m}$ respectively.
 
\begin{prop}\label{stm} We have an equality of framed motives:
\begin{multline*}
\left[\HH^n\pa{\mnb{n}\sm \As k;B_n^{\As k} }; [\oms k];
  [\Phi_n] \right]\cdot \left[\HH^m\pa{\mnb{m}\sm\As l;B_m^{ \As l
    } } ; [\oms l]; [\Phi_m]\right]=\hspace{.6cm} ~\\[2mm]
\sum_{\sigma\in\on{st}(\mb k,\mb l)} \left[
  \HH^{n+m}\pa{\mnb{n+m}\sm A_{\sigma};B_{n+m}^{ A_{\sigma}}}; [\om_{\sigma}];
[\Phi_{n+m}]\right].
\end{multline*}
\end{prop}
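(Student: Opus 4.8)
The plan is to carry the Cartier decomposition of Section~\ref{secstuint}, equation~(\ref{stupropeq}), into the motivic world, working on the blow-up spaces $X_r$ and using the alternative definition of the motivic multiple zeta value established in the preceding theorem. We cannot argue directly on the moduli spaces, as in the shuffle case (Proposition~\ref{motivicshu}), because of Remark~\ref{remstupb}: the individual summands of the Cartier decomposition are not holomorphic on $\mnb{n+m}$. They do, however, become well-defined logarithmic forms once pulled back to $X_{n+m}$, and resolving this non-holomorphicity is exactly the purpose of the spaces $X_r$.

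First I would rewrite the left-hand side on the blow-up spaces. By the preceding theorem,
\[\left[\HH^n\pa{\mnb{n}\sm\As k;B_n^{\As k}};[\oms k];[\Phi_n]\right]=\zeta^{fr.,\mc M}(\mb k,\id)=\left[\HH^n\pa{X_n\sm\wh A_{\mb k}^{\id};\wh B_n^{\wh A_{\mb k}^{\id}}};[\om_{\mb k}^{\id}];[\wh C_n]\right],\]
and similarly for $\mb l$ on $X_m$. The K\"unneth formula together with Theorem~\ref{FMTMhopf} then turns the product of these two framed motives into a single framed motive on $X_n\times X_m$, with frames $[\om_{\mb k}^{\id}\ot\om_{\mb l}^{\id}]$ and $[\wh C_n\times\wh C_m]$; in the cubical coordinates of Section~\ref{modstu} its frame form is $\f{k}{p}(x_1,\ldots,x_n)\,\f{l}{q}(x_{n+1},\ldots,x_{n+m})\,\dx[n+m]$.

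Next I would introduce the natural map $X_{n+m}\ra X_n\times X_m$ coming from the coordinate splitting $(u_1,\ldots,u_{n+m})\mapsto(u_1,\ldots,u_n)\times(u_{n+1},\ldots,u_{n+m})$---the blow-up analogue of the map $\delta$ of Proposition~\ref{stmod}, under which the preimage of $\wh C_n\times\wh C_m$ is $\wh C_{n+m}$---and pull the product form back to $X_{n+m}$, pushing the frames through with Lemmas~\ref{idio} and~\ref{inclmot} as in the shuffle case. By Proposition~\ref{stuprop}, equation~(\ref{stupropeq}), the pulled-back frame form is the sum $\sum_{\sigma\in\on{st}(\mb k,\mb l)}\om_{\sigma}$, where each $\om_{\sigma}$ is the logarithmic form attached to the stuffle term $\sigma$ (a product of two flag forms, the merged components $k_i+l_j$ being responsible for the two-flag shape). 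Since the cube class $[\wh C_{n+m}]$ is fixed while the frame splits additively, Lemma~\ref{som} breaks the motive on $X_{n+m}$ into a sum indexed by $\sigma$, each summand carrying the frame $[\om_{\sigma}]$ and $[\wh C_{n+m}]$. For every $\sigma$, Proposition~\ref{stformlog}---through its partition condition on the merged blocks---ensures that the divisor of singularities of $\om_{\sigma}$ is contained in $\wh A_{n+m}$, so Lemma~\ref{inclmot} lets me enlarge or shrink the divisors $\wh A$ and $\wh B$ and recognise each summand as the two-flag framed motive $\zeta^{fr.,\mc M}(\Fc,\ldots\mid\Fc',\ldots)$ of the preceding theorem.

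Finally, for each $\sigma$ a permutation of the variables---an automorphism of $X_{n+m}$ that stabilises $\wh C_{n+m}$---carries $\om_{\sigma}$ to a standard form and identifies the corresponding summand with $\zeta^{fr.,\mc M}(\sigma,\id)$; applying the preceding theorem once more transports this back to the Goncharov--Manin framed motive $\left[\HH^{n+m}\pa{\mnb{n+m}\sm A_{\sigma};B_{n+m}^{A_{\sigma}}};[\om_{\sigma}];[\Phi_{n+m}]\right]$, which is precisely the $\sigma$-th term on the right-hand side. I expect the main obstacle to be the third step, namely realising the Cartier decomposition~(\ref{stupropeq}) as a genuine splitting of framed motives on $X_{n+m}$: one must check that each $f_{\sigma}$ pulls back to an honest logarithmic form whose singular divisor sits inside $\wh A_{n+m}$, and verify uniformly in $\sigma$ the divisor-inclusion hypotheses needed to apply Lemma~\ref{inclmot}. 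Keeping track of the two maps $X_{n+m}\ra X_n\times X_m$ and $X_{n+m}\ra\mnb{n+m}$ together with all of these inclusions---forced upon us by the non-holomorphicity of Remark~\ref{remstupb}---is where the real work of the proof lies.
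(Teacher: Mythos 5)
Your proposal shares the paper's skeleton (work on the blow-ups $X_r$, split the frame via the Cartier decomposition and Lemma \ref{som} after adjusting divisors with Lemma \ref{inclmot}, undo the permutations $s_\sigma$ by automorphisms of $X_{n+m}$ preserving $\wh C_{n+m}$ and $\wh B_{n+m}$), and your last two steps are exactly the paper's. But the way you get onto and off of the spaces $X_r$ has two genuine gaps. The first is the opening (and closing) claim that, \emph{by the preceding theorem},
\[
\left[\HH^n\pa{\mnb{n}\sm\As k;B_n^{\As k}};[\oms k];[\Phi_n]\right]=\zeta^{fr.,\mc M}(\mb k,\id).
\]
That theorem does not say this: it constructs the framed motive $\zeta^{fr.,\mc M}(\mb k,s)$ on $X_n$ and computes its \emph{period}; it does not identify it, as a framed motive, with the Goncharov--Manin class on $\mnb{n}$. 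Equality of periods is far weaker than equality of framed motives --- if one could pass from the former to the latter, Proposition \ref{stm} would follow at once from Proposition \ref{serstuprop} and no geometry would be needed. The identification of the two definitions is precisely the technical core of the paper's proof: there one constructs (the lemma inside the proof, via explicit flags and Corollary \ref{flblsq}) a morphism $\alpha_r : X_r\ra \mnb{r}\sm A_r$ exhibiting $X_r$ as an iterated blow-up of $\mnb{r}\sm A_r$, checks that $\alpha_r^*$ fixes the cubical coordinates and carries $\wh C_r$ to $\Phi_r$, and deduces the equality of framed motives with Lemmas \ref{idio} and \ref{inclmot} (the corollary following that lemma). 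By citing this identification instead of proving it, you assume the main point --- twice: once for $X_n$, $X_m$ on the left-hand side, and once to return from $X_{n+m}$ to $\mnb{n+m}$ at the end.

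Second, your pivot is a morphism $X_{n+m}\ra X_n\times X_m$ ``coming from the coordinate splitting''. No such map is constructed in the paper, and its existence is not automatic: $X_{n+m}$ is a specific iterated blow-up, and a map defined on a dense open subset need not extend to it. In particular it cannot be extracted from Corollary \ref{flblsq}: the centers of $X_n\times X_m$ over $\A^{n+m}$ contain strata of $\mc D^1_{n+m}$ that are not centers of the product construction --- for $n=m=2$, both the point $(1,1,1,1)$ and the curve $\{x_1=x_2=1,\,x_3x_4=1\}$ lie strictly inside the center $\{x_1=x_2=1\}$ --- so condition (2) of that corollary forces these deeper strata to be blown up first, and no intermediate stage of the resulting tower equals $X_n\times X_m$. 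One could try to produce your map by a universal-property argument (each product center is a stratum of $\mc D^1_{n+m}$, so its preimage ideal sheaf on $X_{n+m}$ should be invertible), but that is an additional lemma to state and prove, together with the compatibility with the relative divisors $\wh B$ and the frames, which your sketch does not address. The paper's routing avoids this entirely: the K\"unneth step and the pullback are performed on the compactified moduli spaces along $\delta$, which is already a morphism $\mnb{n+m}\ra\mnb{n}\times\mnb{m}$, and only then does one descend, once, along $\alpha_{n+m}$ to $X_{n+m}$, where the Cartier decomposition and the permutations are handled as you describe.
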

\begin{proof}
 Let $A_0$ be the Zariski closure of $\partial \mnb{n+m} \sm B_{n+m}$, $B_{n,m}$  the Zariski
closure of the boundary of $\Phi_n \times \Phi_m$ and $A'$ the boundary of
$\left(\mnb{n} \sm \As k\right) \times \left(\mnb{m} \sm \As l\right)$. As the
map $\delta$ maps $B_{n+m}$ onto $B_{n,m}$, we have an induced map 
\[
 \delta : \left(\mnb{n+m}\sm A_0 ; B_{n+m}^{A_0}\right)\lra \left(\left(\mnb{n}
\sm \As k\right) \times \left(\mnb{m} \sm \As l\right) ; B_{n,m}^{A'}\right).
\]
Using the K\"unneth formula, we have maps of mixed Tate motives
\begin{multline}
\HH^n\pa{\mnb{n}\sm \As k;B_n^{\As k} } \otimes \HH^m\pa{\mnb{m}\sm\As l;B_m^{
\As l} } \lra \\ \HH^{n+m}\pa{(\mnb{n}\sm \As k)\times (\mnb{m}\sm\As l) ;
B_{n,m}^{A'}} 
\lra \\ \HH^{n+m}\pa{\mnb{n+m}\sm A_0; B_{n+m}^{A_0}} 
\end{multline}
which are both compatible with the respective frames $[\oms k]\ot[\oms l] ;
[\Phi_n]\ot [\Phi_m]$ , $[\oms k \w \oms l] ; [\Phi_n \times \Phi_m]$ and
$[\delta^*(\oms k \w \oms l )] ; [\Phi_{n+m}]$. 

We now need to show that 
\begin{multline*}
\left[ \HH^{n+m }\pa{\mnb{n+m}\sm A_0 ; B_{n+m}^{A_0}} ; [\delta^*(\oms k \w
\oms l )] , [\Phi_{n+m}] \right]= \\ 
\sum_{\sigma\in\on{st}(\mb k,\mb l)} \left[
  \HH^{n+m}\pa{\mnb{n+m}\sm A_{\sigma};B_{n+m}^{ A_{\sigma}}}; [\om_{\sigma}];
[\Phi_{n+m}]\right].
\end{multline*}
As $A_{\sigma}$ is included in $A_0$, using lemma \ref{inclmot} it is
enough to
prove the previous equality with $A_0$ instead of $A_{\sigma}$ in the RHS. The
two following lemmas tell us that it is enough to work with $X_{n+m}$ (cf.
section \ref{conXn}) instead of $\mnb{n+m}\sm A_0$.
\begin{lem}
 Let $r \geqs 2$ be an integer and let $\tilde{\delta_r} : \mnb r \ra
 (\p^1)^r$ be the map given on the open set by  
\begin{multline*}
 (0,z_1, \ldots, z_r,1,\infty) \longmapsto
(0,z_1,z_2,\infty)\times(0,z_2,z_3,\infty) \times \cdots \\ \times
(0,z_{r-1},z_r,\infty) \times (0,z_r,1,\infty).
\end{multline*}
 Let $A_r$ be the union of the codimension $1$ irreducible components
 of $\partial \mnb{r}$ that are mapped by $\tilde{\delta_r}$ into
 $(\p^1)^r\sm \A^r$. 

Then, $A_r\subset A_0$ and there exists a sequence of flags $\mc F_1,
\ldots, \mc F_N$ of elements 
of $\mc D_r^1$ (Lemma \ref{D1An}) satisfying conditions of Corollary
\ref{flblsq} such that
\begin{align}\label{BLXnMn}
 X_{r} = \on{Bl}_{\mc F_N, \ldots, \mc F_1}\A^{r} \st{\alpha_r}{\lra}
\mnb{r}\sm A_r=\on{Bl}_{\mc F_r, \ldots, \mc F_1  }\A^{r}
\st{\tilde{\delta_r}}{\lra} \A^r.
\end{align}

\end{lem}
\begin{proof} 
The map $\tilde{\delta_r}$ is given in cubical coordinates on $\mnb r$
by $x_i=u_i$, where the $x_i$ 
denote the standard affine coordinates on $(\p^1)^r$. It maps $B_r$
into hyperplanes $x_i=0$ or $x_i=1$. 

The induced map $\mnb{r}\sm A_r \ra \A^r$ is the blow-up along the strata
\begin{gather}\{x_i=x_{i+1}=\ldots=x_j=1\}
\end{gather}
 which are all elements of $\mc
D_r^1$. 

The beginning $\mc F_1, \ldots, \mc F_r$ of the sequence of flags is given by
\begin{gather*}
 \mc F_1=\{\{x_1=x_2=\cdots=x_n=1\}, \{x_1=x_2=\cdots=x_{n-1}=1\},\ldots,
\{x_1=1\} \} \\
\mc F_2= \{\{x_2=x_3=\cdots=x_n=1\}, \{x_2=x_3=\cdots=x_{n-1}=1\}, \ldots,
\{x_2=1\} \} \\
\cdots \\
\mc F_i=\{
\{x_{i}=x_{i+1}=\cdots=x_n=1\},\{x_{i}=x_{i+1}=\cdots=x_{n-1}=1\},\ldots,
\{x_i=1\} \} \\
\cdots \\
\mc F_r=\{\{x_r=1\}\}.
\end{gather*}
This part of the sequence satisfies condition (2) of Corollary \ref{flblsq}.
Then the easiest way to complete the sequence is to take flags with just one
element beginning with the rank $1$ strata of $\mc D_r^1$ (the only stratum of
rank $0$ is $\{x_1=x_2=\ldots=x_n=1\}$), then the rank $2$ strata and so on.

Now that the sequence of flags exists, Corollary \ref{flblsq}
 ensures that the morphisms in (\ref{BLXnMn}) are well-defined.

Indeed, the usual map $\mnb{r} \ra (\p^1)^r$ which maps $(0,z_1, \ldots,
z_r,1,\infty)$ to $(z_1, \ldots, z_r)$ sends $\Phi_r$ to the standard simplex
$\Delta_r=\{0<t_1<\ldots<t_r<1\}$ and maps $B_r$ to the algebraic
boundary of $\Delta_r$. A first sequence of blow-ups along the subvarieties
$\{0=t_1=\ldots
t_i\}$ corresponds to the change of variables from the simplicial to the
cubical coordinates (\ref{blowup}). In order to recover $B_r$, the blow-up
along the proper transform of the subvarieties $\{t_i=t_{i+1}=\ldots =t_j\}$
and $\{t_i=t_{i+1}=\ldots =t_r=1\}$ still has to be performed. The expression
of these subvarieties in cubical coordinates is given by
$\{x_i=x_{i+1}=\ldots=x_j=1\}$. The fact that it seems that we are blowing 
up less strata in order
to recover $\mnb{r}$ from $(\p^1)^r$ using $\tilde{\delta_r}$ (\ref{BLXnMn})
comes from the fact that we are only looking at $\mnb{r}\sm A_r$.
\end{proof}
From the previous lemma we deduce 
\begin{coro} 
\begin{enumerate}
 \item Let $\mb a=(a_1, \ldots, a_b)$ be a $b$-tuple of integers with $a_1
\geqs 2$ and $a_1+ \cdots + a_b=n+m$. Using the previous convention, we have
the following equality of framed mixed Tate motives
\[
 \zeta^{fr. \mc M}(\mb a,\id)=\left[\HH^{n+m}\pa{\mnb{n+m}\sm A_0 ;
B_{n+m}^{A_0}}; [\om_{\mb a}], [\Phi_{n+m}]\right].
\]
\item Let $\mb k$ and $\mb l$ be as in proposition \ref{stm}, then there exist
two distinguished flags $(\mc F, i_1, \ldots, i_p)$ and $(\mc F',j_1, \ldots, j
_q)$ such that the length of $\mc F$ (resp. $\mc F'$) is $i_p$
(resp. $j_q$) with $i_1, j_1 \geqs 2$ and the sets $I_{i_p}$ and
$I_{j_q}'$ form a partition of 
$\ent{1,n}$. The following equality of framed mixed Tate motives holds
\begin{multline*}
 \zeta^{fr. \mc M}(\mc F, i_1, \ldots i_p | \mc F', j_1, \ldots
j_q)= \\ \left[\HH^{n+m}\pa{\mnb{n+m}\sm A_0, B_{n+m}^{A_0}}; [\oms k \w \oms
l],[\Phi_{n+m}]\right].
\end{multline*}
\end{enumerate}

As a consequence, for all $\sigma \in \on{st}(\mb k, \mb l)$, the framed
mixed Tate motives 
\[\left[\HH^{n+m}\pa{\mnb{n+m}\sm A_0, B_{n+m}^{A_0}};
[\om_{\sigma}],[\Phi_{n+m}]\right]
\]
 are equal to the frame mixed motives 
$\zeta^{fr. \mc M}(\sigma)$.
\end{coro}
\begin{proof}
In 1. and 2., the map on the underlying vector  space is given by
$\alpha_{n+m}^*$ (cf. (\ref{BLXnMn})). As $\wh{C}_{n+m}$ is map to
$\Phi_{n+m}$, knowing the behaviour of $\alpha_{n+m}^*$ with respect to the
form $\om_{\mb a}$ and $\oms k \w \oms l$ is enough to deduce that
$\alpha_{n+m}^*$ respects the frames.
\begin{enumerate}
 \item As the the map $\alpha_{n+m}^{*}$ has no effect on the %cubical
$u_i$ coordinates on %\\
 $\mnb{n+m}\sm A_0$, we have $\alpha_{n+m}^*(\om_{\mb
a})=\om_{\mb a }^{\id}$, and thus the equality of framed mixed Tate motives.
\item Writing down in cubical coordinates the expression 
\[
\om_{\mb k}=f_{\mb k}(u_1, \ldots , u_n) d^n u\quad \mx{and}\quad \om_{\mb
l}=f_{\mb k}(u_{n+1}, \ldots , u_{n+m}) d^m u
\]
leads to the definition of two distinguished flags
\[
 (\mc F, i_1, \ldots, i_p) \quad \mx{and} \quad(\mc F',j_1, \ldots, j_q),
\]
 as in remark
\ref{flagom} with $s=\id$. The fact that $\alpha_{n+m}^*$ respects the frames
come from the equality
\[
 \om^{\mc F'}_{i_1,\ldots, i_p} \w \om^{\mc F'}_{j_1, \ldots,
j_q}=\alpha_{n+m}^*(\oms k \w \oms l).
\]
\end{enumerate}
\end{proof}
The only thing that remains to be checked to complete the proof of proposition
\ref{stm} is, using the notation of the previous lemma, that 
\[
\zeta^{fr. \mc M}(\mc F, i_1, \ldots i_p | \mc F', j_1, \ldots
j_q)=\sum_{\sigma \in \on{st}(\mb k , \mb l)} \zeta^{fr. \mc M} (\sigma, \id).
\]
Using the computation of section \ref{secstuint}, in particular the
proposition \ref{stuprop}, we have that for each $\sigma \in \on{st}(\mb k
,\mb l)$ there exists a permutation $s_{\sigma}$ such that
\[
 [\om^{\mc F'}_{i_1,\ldots, i_p} \w \om^{\mc F'}_{j_1, \ldots,
j_q}]= \sum_{\sigma \in \on{st}(\mb k
,\mb l)} [\om_{\sigma, s_{\sigma}}].
\]
As the divisor $A_{\mc F', j_1, \cdots, j_q}^{\mc F, i_1, \ldots, i_p}$ of
$\om^{\mc F'}_{i_1,\ldots, i_p} \w \om^{\mc F'}_{j_1, \ldots,
j_q}$ and the divisors $A_{\sigma, s_{\sigma}}$ are in $\wh{A}_{n+m}$, lemma
\ref{inclmot} and an analogue of lemma \ref{idio} show that
\begin{align}\label{stmot1}
 \zeta^{fr. \mc M}(\mc F, i_1, \ldots i_p | \mc F', j_1, \ldots
j_q)=\sum_{\sigma \in \on{st}(\mb k , \mb l)} \zeta^{fr. \mc M} (\sigma,
s_{\sigma}).
\end{align}

Permuting the variables gives a well defined morphism $X_{n+m}\ra
X_{n+m}$ that preserves $\wh{C}_{n+m}$ and its algebraic boundary
$\wh{B}_{n+m}$. It leads, on each term of the RHS of \eqref{stmot1}, to an
equality
\[
 \zeta^{fr. \mc M} (\sigma,
s_{\sigma})=\zeta^{fr. \mc M} (\sigma,\id),
\]
and hence to 
\[
  \zeta^{fr. \mc M}(\mc F, i_1, \ldots i_p | \mc F', j_1, \ldots
j_q)=\sum_{\sigma \in \on{st}(\mb k , \mb l)} \zeta^{fr. \mc M} (\sigma,
\id).
\]
 and Proposition \ref{stm}.

\end{proof}

\bibliographystyle{amsalpha}
\nocite{}
\bibliography{shubib}

\providecommand{\bysame}{\leavevmode\hbox to3em{\hrulefill}\thinspace}
\providecommand{\MR}{\relax\ifhmode\unskip\space\fi MR }
% \MRhref is called by the amsart/book/proc definition of \MR.
\providecommand{\MRhref}[2]{%
  \href{http://www.ams.org/mathscinet-getitem?mr=#1}{#2}
}
\providecommand{\href}[2]{#2}
\begin{thebibliography}{BGSV90}

\bibitem[BGSV90]{BGSV}
A.~A. Be{\u\i}linson, A.~B. Goncharov, V.~V. Schechtman, and A.~N. Varchenko,
  \emph{Aomoto dilogarithms, mixed {H}odge structures and motivic cohomology of
  pairs of triangles on the plane}, The Grothendieck Festschrift, Vol.\ I,
  Progr. Math., vol.~86, Birkh\"auser Boston, Boston, MA, 1990, pp.~135--172.

\bibitem[Bro06]{Brownphd}
Francis Brown, \emph{Multiple zeta values and periods of moduli spaces
  {$\overline{\mathfrak{M}}_{0,n}(\mathbb{R})$}.}, Ph.D. thesis, Universit{\'e}
  de Bordeaux, ar{X}iv:mmath/0606419, 2006.

\bibitem[GM04]{MZMSGM}
A.~B. Goncharov and Yu.~I. Manin, \emph{Multiple {$\zeta$}-motives and moduli
  spaces {$\overline{\m_{0,n}}$}}, Compos. Math. \textbf{140} (2004), no.~1,
  1--14.

\bibitem[Gon99]{VHMGon}
A.~B. Goncharov, \emph{Volumes of hyperbolic manifolds and mixed {T}ate
  motives}, J. Amer. Math. Soc. \textbf{12} (1999), no.~2, 569--618.

\bibitem[Gon01]{MPMTMGon}
\bysame, \emph{Multiple polylogarithms and mixed tate motives},
  math.AG/0103059, May 2001.

\bibitem[Gon02]{PMMGon}
\bysame, \emph{Periods and mixed motives}, www.arxiv.org/abs/math.AG/0202154,
  May 2002.

\bibitem[Gon05]{GSFGGon}
\bysame, \emph{Galois symmetries of fundamental groupoids and noncommutative
  geometry}, Duke Math. J. \textbf{128} (2005), no.~2, 209--284.

\bibitem[Hu03]{COVHu}
Yi~Hu, \emph{A compactification of open varieties}, Trans. Amer. Math. Soc.
  \textbf{355} (2003), no.~12, 4737--4753 (electronic).

\bibitem[Uly02]{PCCSUly}
Alexander~P. Ulyanov, \emph{Polydiagonal compactification of configuration
  spaces}, J. Algebraic Geom. \textbf{11} (2002), no.~1, 129--159.

\bibitem[Voe00]{Vo00}
V.~Voevodsky, \emph{Triangulated category of motives over a field}, Cycles,
  transfers, and motivic homology theories, Annals of Math. Studies, vol. 143,
  Princeton University Press., 2000.

\end{thebibliography}

% \vspace{3cm}
% \begin{flushleft}
% Ismaël Soudères\\
% soudères@math.jussieu.fr\\
% Institut de Mathématique de Jussieu (IMJ),\\
% Université Paris Diderot -- Paris 7
% \end{flushleft}

\end{document}